\numberwithin{equation}{section}
\numberwithin{equation}{section}
\newtheorem{defi}{Definition}[section]
\newtheorem{theorem}[defi]{Theorem}
\newtheorem{corollary}[defi]{Corollary}
\newtheorem{proposition}[defi]{Proposition}
\newtheorem{remark}[defi]{Remark}
\newtheorem{remarks}[defi]{Remarks}
\newcommand{\cF}{{\mathcal F}}
\newcommand{\cH}{{\mathcal H}}
\newcommand{\cB}{{\mathcal B}}
\newcommand{\CC}{{\mathbb C}}
\newcommand{\BB}{{\mathbb B}}
\newcommand{\EE}{{\mathbb E}}
\newcommand{\NN}{{\mathbb N}}
\newcommand{\R}{{\mathbb R}}
\newcommand{\RR}{{\mathbb R}}
\newcommand{\PP}{{\mathbb P}}
\renewcommand{\epsilon}{\varepsilon}
\begin{document}

\title[Critical Spaces]{Critical Spaces For Quasilinear Parabolic Evolution Equations and Applications}

\author{Jan Pr\"uss}
\address{Martin-Luther-Universit\"at Halle-Witten\-berg\\
         Institut f\"ur Mathematik \\
         Theodor-Lieser-Strasse 5\\
         D-06120 Halle, Germany}
\email{jan.pruess@mathematik.uni-halle.de}

\author{Gieri Simonett}
\address{Department of Mathematics\\
        Vanderbilt University\\
        Nashville, Tennessee\\
        USA}
\email{gieri.simonett@vanderbilt.edu}

\author{Mathias Wilke}
\address{Universit\"at Regensburg\\
Fakult\"at f\"ur Mathematik\\
D-93040 Regensburg, Germany}
\email{mathias.wilke@ur.de}

\thanks{This work was supported by a grant from the Simons Foundation (\#426729, Gieri Simonett).}

\subjclass[2010]{Primary: 
35K58,  
35K59,  
35K90,  
35B40,  
Secondary:
35Q35,  
76D05.  
 }
 \keywords{Semilinear parabolic equations, quasilinear parabolic equations, 
 critical spaces, Navier-Stokes equations, vorticity equations, scaling invariance}

\begin{abstract}
We present a comprehensive theory of critical spaces for the broad class of quasilinear parabolic evolution equations.
The approach is based on  maximal $L_p$-regularity in time-weighted function spaces. 
It is shown that our notion of critical spaces coincides with the concept of scaling invariant spaces in case that
the underlying partial differential equation enjoys a scaling invariance.
Applications to the vorticity equations for the Navier-Stokes problem,
convection-diffusion equations,
the Nernst-Planck-Poisson equations in electro-chemistry,
chemotaxis equations,
the MHD equations, and some other well-known parabolic equations are given.
\end{abstract}

\maketitle

\section{Introduction}
In the last decades there has been an increasing interest in finding critical spaces for nonlinear parabolic partial differential equations. 
There is an extensive literature on this program, but so far a general unified approach seems to be lacking and each equation seems to require its own theory.

As a matter of fact, there is no generally accepted definition 
in the mathematical literature concerning the notion of critical spaces.
One possible definition may be based on the idea of a `largest space of initial data such that the given PDE is well-posed.'
However, this is a rather vague concept that requires additional clarifying information. 
Critical spaces are often introduced as  `scaling invariant spaces,' provided the underlying PDE enjoys a scaling invariance.
A prototype example is given by the Navier-Stokes problem on $\R^d$,
\begin{equation*}
\begin{aligned}
\partial_t u + u\cdot \nabla u -\Delta u +\nabla\pi =0,\quad 
{\rm div}\, u =0, \quad 
          u(0)= u_0,
\end{aligned}
\end{equation*}  
which is invariant under the scaling
$$(u_\lambda (t,x),\pi_\lambda(t,x)):=(\lambda u(\lambda^2 t,\lambda x),\lambda^2 \pi(\lambda^2 t,\lambda x)).$$
In this case one shows that the spaces $L_d(\R^d)$ and $\dot B^{d/q-1}_{qp}(\R^d)$ are scaling invariant for $u$,
and thus are `critical spaces.'

Clearly, this latter concept of `critical space' breaks down as soon as a given equation fails to have a scaling invariance.

In this paper we present a comprehensive theory of critical spaces for the broad class of quasilinear parabolic evolution equations.
Our approach is based on the concept of maximal $L_p$-regularity in time-weighted function spaces. 
In this framework, we introduce the notion of a `critical weight' $\mu_c$ and a corresponding  `critical space' $X_c=X_{\gamma,\mu_c}$.
We will show that 
\begin{enumerate}
\vspace{1mm}
\item $X_{c}$ is, in a generic sense, the largest space of initial data for which the given equation is well-posed.
\vspace{1mm}
\item $X_{c}$ is scaling invariant, provided the given equation admits a scaling.
\end{enumerate}
\vspace{1mm}
The spaces $X_{c}$, thus, encompass and combine the properties mentioned above.
We shall also show that this definition of `critical space' awards us with considerable flexibility in choosing 
an appropriate setting for analyzing a given equation.
For instance, it turns out that the critical spaces $X_{c}$ are independent within the
scale of interpolation-extrapolation spaces associated with a given partial differential equation
(in a sense to be made more precise below).

With our approach, we are able to recover many known results in a unified way,
and on the other side, we will be able to add a variety of new results for some well-known partial differential equations.

\medskip
The concept of `critical weight' was first introduced by Pr\"uss and Wilke in \cite{PrWi17}
and was then applied to the Navier-Stokes equations by Pr\"uss and Wilke \cite{PrWi17, PrWi17a},
and to the quasi-geostrophic equations by Pr\"uss \cite{Pru17}.

\medskip
In this paper, we elaborate on the properties of the critical spaces $X_{c}$ alluded to above.
In addition, we include applications to
the Cahn-Hilliard equations, 
the vorticity equations for the Navier-Stokes problem,
convection-diffusion equations,
the Nernst-Planck-Poisson equations in electro-chemistry,
chemotaxis equations,
and the MHD equations.
 
\bigskip
For the reader's convenience we now state and explain the basic underlying result on 
quasilinear parabolic evolution equations obtained recently in Pr\"uss and Wilke~\cite{PrWi17}.

\medskip
Let $X_0,X_1$ be Banach spaces such that $X_1$ embeds densely in $X_0$, let $p\in(1,\infty)$ and  $1/p<\mu\leq1$.
We consider the following quasilinear parabolic evolution equation
\begin{equation}
\label{qpp}
\dot{u} +A(u)u = F_1(u)+F_2(u),\; t>0,\quad u(0)=u_1.
\end{equation}
The space of initial data will be the real interpolation space $X_{\gamma,\mu} =(X_0,X_1)_{\mu-1/p,p}$, and the state space of the problem is $X_\gamma=X_{\gamma,1}$. Let $ V_\mu\subset X_{\gamma,\mu}$ be open and $u_1\in V_\mu$. 
Furthermore, let $X_\beta=(X_0,X_1)_\beta$, $\beta\in (0,1)$,  
denote  the complex interpolation spaces. We will impose the following assumptions.

\medskip

\noindent
{\bf (H1)} $(A,F_1)\in C^{1-}(V_\mu; \cB(X_1,X_0)\times X_0)$.

\medskip

\noindent
{\bf (H2)} $F_2: V_\mu\cap X_\beta \to X_0$ satisfies the estimate
$$ 
|F_2(u_1)-F_2(u_2)|_{X_0} \leq C \sum_{j=1}^m (1+|u_1|_{X_\beta}^{\rho_j}+|u_2|_{X_\beta}^{\rho_j})|u_1-u_2|_{X_{\beta_j}},
$$
$ u_1, u_2\in V_\mu\cap X_\beta$,
for some numbers $m\in\NN$, $\rho_j\geq 0$, $\beta\in (\mu-1/p,1)$, $\beta_j\in [\mu-1/p, \beta]$, where $C$ denotes a constant which may depend on $|u_i|_{X_{\gamma,\mu}}$.
The case $\beta_j=\mu-1/p$ is only admissible if 
{\bf (H2)} holds with $X_{\beta_j}$ replaced by $X_{\gamma,\mu}$.

\medskip

\noindent
{\bf (H3)} For all $j=1,\ldots,m$ we have
$$ \rho_j( \beta-(\mu-1/p)) + (\beta_j -(\mu-1/p)) \leq 1 -(\mu-1/p).$$

\noindent
Allowing for equality in {\bf (H3)} is not for free and we additionally need to impose the following structural {\bf Condition (S)} on the Banach spaces $X_0$ and $X_1$.

\medskip

\noindent
{\bf (S)} The space $X_0$ is of class UMD. The embedding
$$ {H}^{1}_p(\RR;X_0)\cap L_{p}(\RR;X_1)\hookrightarrow {H}^{1-\beta}_{p}(\RR;X_\beta),$$
is valid for each $\beta\in [0,1]$.

\begin{remark}
By the {mixed derivative theorem}, Condition {\bf (S)} is valid if $X_0$ is of class UMD and if there is an operator $A_{\#}\in \cH^\infty(X_0)$, with domain
${\sf D}(A_\#)=X_1$, and $\cH^\infty$-angle $\phi_{A_\#}^\infty<\pi/2$. We refer to Pr\"uss and Simonett \cite[Chapter 4]{PrSi16}.
\end{remark}

\noindent
The usual {\em solution spaces} in the framework of  {\em maximal $L_p$}-regularity are
$$ u\in H^1_p((0,a);X_0)\cap L_p((0,a),X_1)\hookrightarrow C([0,a];X_{\gamma,1}),$$
where the {\em state space} $X_{\gamma,1}$ is defined by
$$ X_{\gamma,\mu}=(X_0,X_1)_{\mu-1/p,p},$$
with $\mu=1$. Here we want to advertise for {\em time-weighted spaces}, defined by
$$ u\in L_{p,\mu}((0,a);Y) \quad \Leftrightarrow \quad t^{1-\mu} u \in L_p((0,a);Y)),\quad 1\geq\mu>1/p.$$
The corresponding solution classes in the time weighted case are
$$ u\in H^1_{p,\mu}((0,a);X_0)\cap L_{p,\mu}((0,a),X_1)\hookrightarrow C([0,a];X_{\gamma,\mu}).$$
There are several compelling reasons for time weights, among them the following:
\begin{itemize}
\vspace{1mm}
\item Reduced initial regularity,
\vspace{1mm}
\item Instantaneous gain of regularity,
\vspace{1mm}
\item Compactness properties of orbits.
\end{itemize}
\vspace{1mm}
Important is the fact that maximal regularity is independent of $\mu\in (1/p,1]$. 
In the $L_p$-framework, this was first observed by Pr\"uss and Simonett \cite{PrSi04}.

\medskip

In Pr\"uss and Wilke \cite{PrWi17} the following extension of Theorem 2.1 in LeCrone, Pr\"uss and Wilke \cite{LPW14} was obtained.

\begin{theorem}\label{main} Suppose that the structural assumption {\bf (S)} holds, and assume that hypotheses {\bf (H1), (H2), (H3)} are valid. Fix any $u_0\in V_\mu$ such that $A_0:= A(u_0)$ has maximal $L_p$-regularity. Then there  is $a=a(u_0)>0$ and $\varepsilon =\varepsilon(u_0)>0$ with $\bar{B}_{X_{\gamma,\mu}}(u_0,\varepsilon) \subset V_\mu$ such that problem \eqref{qpp} admits a unique solution
$$ u(\cdot, u_1)\in H^1_{p,\mu}((0,a);X_0)\cap L_{p,\mu}((0,a); X_1) \cap C([0,a]; V_\mu),$$
for each initial value $u_1\in \bar{B}_{X_{\gamma,\mu}}(u_0,\varepsilon).$ Furthermore, there is a constant $c= c(u_0)>0$ such that
$$ |u(\cdot,u_1)-u(\cdot,u_2)|_{\EE_{1,\mu}(0,a)} \leq c|u_1-u_2|_{X_{\gamma,\mu}}$$
holds for all $u_1,u_2\in \bar{B}_{X_{\gamma,\mu}}(u_0,\varepsilon)$.
\end{theorem}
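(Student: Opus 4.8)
The plan is to set up a fixed-point argument in the maximal-regularity solution space and exploit the freedom in the time weight $\mu$. Write $\mathbb{E}_{1,\mu}(0,a) := H^1_{p,\mu}((0,a);X_0)\cap L_{p,\mu}((0,a);X_1)$ and $\mathbb{E}_{0,\mu}(0,a) := L_{p,\mu}((0,a);X_0)$. Since $A_0 = A(u_0)$ has maximal $L_p$-regularity, by the Pr\"uss--Simonett transference result the operator $A_0$ also has maximal $L_{p,\mu}$-regularity, uniformly in small $a$; hence the linear problem $\dot v + A_0 v = f$, $v(0) = v_1$, has a unique solution in $\mathbb{E}_{1,\mu}(0,a)$ for $f \in \mathbb{E}_{0,\mu}(0,a)$ and $v_1 \in X_{\gamma,\mu}$, with norm bounded by $M(|f|_{\mathbb{E}_{0,\mu}} + |v_1|_{X_{\gamma,\mu}})$ and $M$ independent of $a \in (0,a_0]$. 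Let $v_*$ be the reference solution with right-hand side $f = 0$ and data $u_1$; the substitution $u = v_* + w$ reduces \eqref{qpp} to a fixed-point problem $w = S(w)$ on a small ball in the affine subspace $\{w \in \mathbb{E}_{1,\mu}(0,a) : w(0) = u_1 - v_*(0)\}$, where $S(w)$ solves $\dot z + A_0 z = (A_0 - A(v_*+w))(v_*+w) + F_1(v_*+w) + F_2(v_*+w)$, $z(0) = u_1 - u_0$.

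First I would record the nonlinear estimates. Using {\bf (H1)}, the Lipschitz dependence of $A$ and $F_1$ on $V_\mu$ together with the embedding $\mathbb{E}_{1,\mu}(0,a) \hookrightarrow C([0,a];X_{\gamma,\mu})$ and the fact that $t^{1-\mu}$ is integrable give smallness of the $A_0$-, $A$- and $F_1$-contributions as $a \to 0$, after first shrinking $\varepsilon$ so that the relevant balls lie in $V_\mu$; one also uses that $A(u_0)$ and $A(u)$ differ little for $u$ near $u_0$ to absorb the leading term $(A_0 - A(v_*+w))w$ into the maximal-regularity constant. The delicate term is $F_2$. Here I would invoke the mixed-derivative embedding from Condition {\bf (S)}: interpolating $\mathbb{E}_{1,\mu}(0,a) \hookrightarrow H^{1-\beta}_{p,\mu}((0,a);X_\beta)$ and combining with the trace/Sobolev embeddings in the time-weighted scale, one obtains $\mathbb{E}_{1,\mu}(0,a) \hookrightarrow L_{r}((0,a);X_\beta)$ and $\hookrightarrow L_{q_j}((0,a);X_{\beta_j})$ for exponents $r,q_j$ dictated by $\beta,\beta_j,\mu,p$. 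Condition {\bf (H3)} is exactly what makes the product $|u|_{X_\beta}^{\rho_j}|u_1-u_2|_{X_{\beta_j}}$ lie in $L_{p,\mu}((0,a);X_0)$ by H\"older's inequality, with a constant that is either $O(a^\delta)$ for some $\delta > 0$ (strict inequality in {\bf (H3)}) or, in the borderline case, small because the reference solution $v_*$ is small in the corresponding norm on short intervals while $w$ ranges over a small ball. This is the step I expect to be the main obstacle: closing the critical (equality) case forces one to split each term into a part multiplied by the small quantity $|w|$ or $|v_* - u_*|$ and a part carrying a genuine power of $a$, and to choose $a$ and $\varepsilon$ in the right order.

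With these estimates in hand, $S$ maps a closed ball $\bar{B}_R(v_*) \cap \{z(0) = u_1 - u_0\}$ of $\mathbb{E}_{1,\mu}(0,a)$ into itself and is a contraction there, provided $a$ and then $\varepsilon$ are chosen small enough depending only on $u_0$ (not on the particular $u_1 \in \bar{B}_{X_{\gamma,\mu}}(u_0,\varepsilon)$). The contraction mapping principle yields the unique solution $u(\cdot,u_1) \in \mathbb{E}_{1,\mu}(0,a)$, and the embedding into $C([0,a];X_{\gamma,\mu})$ together with the smallness of $\varepsilon$ places $u$ in $C([0,a];V_\mu)$. For the Lipschitz estimate, I would subtract the fixed-point identities for $u_1$ and $u_2$: $u(\cdot,u_1) - u(\cdot,u_2)$ solves a linear problem with data $u_1 - u_2$ and right-hand side controlled, via the same nonlinear estimates, by $(1/2)|u(\cdot,u_1)-u(\cdot,u_2)|_{\mathbb{E}_{1,\mu}}$ plus a term $\le c|u_1-u_2|_{X_{\gamma,\mu}}$ coming from the difference of the reference solutions; absorbing the first term gives the claimed bound $|u(\cdot,u_1)-u(\cdot,u_2)|_{\mathbb{E}_{1,\mu}(0,a)} \le c|u_1-u_2|_{X_{\gamma,\mu}}$. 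Uniqueness within the whole solution class follows by a standard connectedness/continuation argument on the maximal interval where the two solutions agree.
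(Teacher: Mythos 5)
Your proposal is correct and follows essentially the same route as the paper's argument (sketched there for the semilinear case and carried out in the cited Pr\"uss--Wilke work for the quasilinear case): maximal $L_{p,\mu}$-regularity for the frozen operator $A(u_0)$ uniformly in $a$, a contraction around a reference solution with a trace-zero perturbation, and Condition {\bf (S)} (mixed derivative theorem) plus weighted Sobolev/H\"older embeddings governed by {\bf (H2)}--{\bf (H3)}, with the critical equality case closed by smallness of the reference part on short intervals rather than a power of $a$. One bookkeeping slip: if the reference solution $v_*$ carries the initial value $u_1$, then the fixed-point iterates must satisfy $z(0)=0$ rather than $z(0)=u_1-u_0$; the vanishing trace is precisely what makes the embedding constants of ${_0\EE}_{1,\mu}(0,a)\hookrightarrow C([0,a];X_{\gamma,\mu})$ independent of $a$, which your smallness argument needs.
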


\noindent
We call $j$ subcritical if  strict inequality holds in {\bf (H3)}, and critical otherwise. 
As $\beta_j\leq \beta<1$, any $j$ with $\rho_j=0$ is subcritical.
Furthermore,  {\bf (H3)} is equivalent to $ \rho_j\beta+\beta_j -1\leq \rho_j(\mu-1/p)$, hence the minimal value of $\mu$ is given by
$$ \mu_{c} = \frac{1}{p} + \beta -\min_j(1-\beta_j)/\rho_j.$$
We call this value the {\em critical weight}. Thus Theorem \ref{main} shows that we have local well-posedness of \eqref{qpp} for initial values in
the space $X_{\gamma,\mu_{}}$. Therefore, it is meaningful to name this space the {\em critical space} for \eqref{qpp}.

Note that the critical space $X_{\gamma,\mu_c}$ is given by the real interpolation space
$$ X_{\gamma,\mu_c}= (X_0,X_1)_{\mu_c-1/p,p},$$
and $\mu_{c} -\frac{1}{p} =\beta -\min_j(1-\beta_j)/\rho_j$ is independent of $p$. Therefore, the exponent $p$ only shows 
up as a microscopic parameter.

Recall the embeddings
\begin{equation*}
\begin{aligned}
&(X_0,X_1)_{\alpha,p_1}\hookrightarrow (X_0,X_1)_{\alpha,p_2}\hookrightarrow (X_0,X_1)_{\beta,1}, 
\quad \mbox{for } p_1\leq p_2,\; 0<\alpha<\beta<1,\\
&(X_0,X_1)_{\beta,1}\hookrightarrow (X_0,X_1)_\beta\hookrightarrow (X_0,X_1)_{\beta,\infty}.
\end{aligned}
\end{equation*}
The philosophy is to choose $p$ large, say $1/p< 1-\beta$. We then have
$$ X_{\gamma,1} = (X_0,X_1)_{1-1/p,p}\hookrightarrow X_\beta\hookrightarrow X_{\gamma,\mu_c},$$
as $\mu_c-1/p<\beta <1-1/p$. As a consequence, the qualitative theory of K\"ohne, Pr\"uss and Wilke \cite{KPW10} and Pr\"uss, Simonett and Zacher \cite{PSZ09} is available; see also Pr\"uss and Simonett \cite{PrSi16}, Chapter 5.

In Section 2 we give an example which shows that Theorem \ref{main} is optimal in a generic sense 
(but might not be if additional structural properties hold).
This means in specific applications that our theory yields generic lower bounds for the critical space. 
Nevertheless, we find in all PDE applications considered so far that the critical spaces obtained by our theory coincide with the known ones, which in most cases come from (local) scaling invariance. 
Also, a PDE equation can often be considered in a scale of function spaces, 
and then it turns out that the critical spaces are widely independent of this scale.

\medskip

\section{Semilinear parabolic evolution equations with bilinear nonlinearities}
This is a  special case of the {quasilinear theory} presented above, and it encompasses 
many important differential equations in fluid dynamics, physics, and chemistry; for instance, the
Navier-Stokes equations,
vorticity equations for the Navier-Stokes problem,
quasi-geostrophic (subcritical) equations,
convection-diffusion equations,
Nernst-Planck-Poisson equations,
magneto-hydrodynamics, and many more. 

The section is organized as follows.
In Subsection 2.1 we first formulate a result on local well-posedness for equation~\eqref{3.1}
and give a sketch of the proof for the critical case.
The ingredients of the proof for the critical case rely in an essential way on the mixed derivative theorem 
and sharp embedding results for time-weighted Sobolev spaces.
Corollaries~\ref{cor2} and~\ref{cor1} describe conditions for global existence,
while Theorem~\ref{thm5}  contains a result of Serrin type which states that global existence is equivalent to an integral a priori bound.
In Subsection 2.2 we show by means of a counterexample that
the critical spaces $X_{\gamma,\mu_c}$ identified in Theorem~\ref{thm1} are - in a generic sense - the largest spaces of initial data for
which equation~\eqref{3.1} is well-posed.
In Subsection 2.3 it is shown that the (homogeneous versions of) critical spaces are scaling invariant, provided 
equation~\eqref{3.1} admits a scaling.
Finally, in Subsection~2.4 we show that the critical spaces are invariant with respect to the extrapolation-interpolation scale associated with
the operator $A$ in equation~\eqref{3.1}.
Here we would like to mention that  similar results are also true (with appropriate modifications) for
semilinar parabolic equations with multilinear nonlinearities. 

\subsection{Local and global existence of solutions}
Let $X_0$ be a UMD-space, $X_1\hookrightarrow X_0$ densely, $A:X_1\to X_0$ bounded and such that $A\in\mathcal{BIP}(X_0)$ 
with power angle $\theta_A<\pi/2$, and let $p\in(1,\infty)$.  Consider the semilinear parabolic evolution equation
\begin{equation}
\label{3.1}
\partial_t u +Au = G(u,u),\; t>0,\quad u(0)=u_0.
\end{equation}
Here $G:X_\beta\times X_\beta \to X_0$ is bilinear and bounded, with
$$ X_\beta = (X_0,X_1)_\beta = {\sf D}(A^\beta),$$
for some $\beta\in [0,1)$.

Our main result for \eqref{3.1} reads as follows.
\goodbreak
\begin{theorem} 
\label{thm1}
 Assume $p\in (1,\infty)$, $\mu\in (1/p,1]$, $\beta\in (\mu-1/p,1)$ and
\begin{equation}
\label{3.2} 
2\beta-1 \leq \mu-1/p.
\end{equation}
Then for each initial value $u_0\in X_{\gamma,\mu}$ there is $a=a(u_0)>0$ and a unique solution of (\ref{3.1}) in the class
$$ u\in H^1_{p,\mu}((0,a);X_0)\cap L_{p,\mu}((0,a),X_1)\hookrightarrow C([0,a];X_{\gamma,\mu}).$$
The solution exists on a maximal time interval $[0,t_+(u_0))$ and depends continuously on the data and moreover satisfies
$$ u\in H^1_{p,loc}((0,t_+);X_0)\cap L_{p,loc}((0,t_+),X_1)\hookrightarrow C((0,t_+);X_{\gamma,1}).$$
Hence it regularizes instantly, provided $\mu<1$.
\end{theorem}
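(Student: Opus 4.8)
The plan is to deduce Theorem~\ref{thm1} from the abstract quasilinear result, Theorem~\ref{main}, by recasting \eqref{3.1} in the form \eqref{qpp}. Here $A(u)\equiv A$ is constant, so it trivially lies in $C^{1-}(V_\mu;\cB(X_1,X_0))$ and $A_0=A$ has maximal $L_p$-regularity because $A\in\mathcal{BIP}(X_0)$ with power angle $\theta_A<\pi/2$ on the UMD space $X_0$; this also supplies Condition~{\bf (S)} via the mixed derivative theorem, as noted in the remark. We put $F_1\equiv 0$ and $F_2(u)=G(u,u)$. Bilinearity and boundedness of $G$ give
$$ |G(u_1,u_1)-G(u_2,u_2)|_{X_0} \leq |G(u_1-u_2,u_1)|_{X_0}+|G(u_2,u_1-u_2)|_{X_0} \leq C(|u_1|_{X_\beta}+|u_2|_{X_\beta})|u_1-u_2|_{X_\beta}, $$
which is exactly hypothesis {\bf (H2)} with $m=1$, $\rho_1=1$, and $\beta_1=\beta$. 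Hypothesis {\bf (H3)} for this single index reads $(\beta-(\mu-1/p))+(\beta-(\mu-1/p))\leq 1-(\mu-1/p)$, i.e.\ $2\beta-1\leq\mu-1/p$, which is precisely the standing assumption \eqref{3.2}. Thus all hypotheses of Theorem~\ref{main} are met (with $V_\mu=X_{\gamma,\mu}$, which is open), and we obtain, for each $u_0\in X_{\gamma,\mu}$, a local-in-time solution in the asserted maximal-regularity class $H^1_{p,\mu}((0,a);X_0)\cap L_{p,\mu}((0,a);X_1)$, together with the Lipschitz dependence on the initial data and hence continuous dependence; the embedding into $C([0,a];X_{\gamma,\mu})$ is the standard trace embedding for time-weighted maximal regularity spaces.

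Next I would handle the extension to a maximal interval and the instantaneous regularization. The maximal existence time $t_+(u_0)$ is defined by the usual continuation argument: restart the equation at times $t_0\in(0,a)$ using $u(t_0)$ as new initial datum, and glue. The key point for instant regularization is that for $\mu<1$ and any $t_0\in(0,t_+)$ one has $u(t_0)\in X_{\gamma,1}$; indeed, the time-weighted maximal regularity solution satisfies $u\in H^1_{p}((t_0,a);X_0)\cap L_p((t_0,a);X_1)$ on any subinterval bounded away from $0$ because the weight $t^{1-\mu}$ is bounded above and below there, and the trace of this unweighted class lies in $X_{\gamma,1}=X_\gamma$. Feeding $u(t_0)\in X_\gamma\hookrightarrow X_{\gamma,\mu}$ back into Theorem~\ref{main} (now the abstract theorem is applied with weight parameter $1$, or simply one notes that the solution is unique and therefore coincides with the higher-regularity one on the overlap) yields $u\in H^1_{p,loc}((0,t_+);X_0)\cap L_{p,loc}((0,t_+);X_1)\hookrightarrow C((0,t_+);X_{\gamma,1})$, which is the claimed smoothing.

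The only genuinely delicate issue — and the one I expect to be the main obstacle — is the \emph{critical case}, i.e.\ when equality holds in \eqref{3.2}. In that regime Theorem~\ref{main} is invoked at $\mu=\mu_c$ and the contraction-mapping argument behind it is no longer "for free": the estimate for the Nemytskii-type map $u\mapsto G(u,u)$ in the solution space is borderline, and one must control the bilinear term using the sharp embedding
$$ H^1_{p,\mu}((0,a);X_0)\cap L_{p,\mu}((0,a);X_1)\hookrightarrow H^{1-\beta}_{p,\mu}((0,a);X_\beta) \hookrightarrow L_{r,\mu}((0,a);X_\beta) $$
for a suitable $r$, together with the fact that on the critical line the two occurrences of $\beta$ conspire so that the $t$-weight exponents add up exactly to what $L_1$ in time permits — with the crucial gain that the constant in the relevant embedding can be made small by choosing $a$ small. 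This is where Condition~{\bf (S)} (equivalently the $\cH^\infty$-calculus for $A$) and the mixed derivative theorem enter essentially, and it is the content of the "sketch of the proof for the critical case" announced in the text; the subcritical case is comparatively routine since there one has a genuine power of $a$ to absorb constants. I would therefore organize the proof so that the subcritical case is dispatched immediately by citing Theorem~\ref{main}, and then present the critical-case estimates in detail, as this is the only place where the argument is not a direct quotation of the abstract theory.
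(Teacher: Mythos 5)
Your reduction of \eqref{3.1} to the quasilinear framework is correct and essentially complete for the local statement: with $F_1=0$, $F_2(u)=G(u,u)$, the bilinear estimate gives {\bf (H2)} with $m=1$, $\rho_1=1$, $\beta_1=\beta$; {\bf (H3)} is exactly \eqref{3.2}; and under the standing assumptions ($X_0$ of class UMD, $A\in\mathcal{BIP}(X_0)$ with power angle $<\pi/2$) Condition {\bf (S)} follows from the mixed derivative theorem and maximal $L_p$-regularity of $A$ from the Dore--Venni theorem. Since Theorem \ref{main} allows equality in {\bf (H3)} under {\bf (S)}, the critical case is already covered by the citation, and your restart-and-glue argument for the maximal interval and the instantaneous regularization is the standard one (restriction of the weighted solution to $[t_0,a]$ lies in the unweighted class, hence $u(t_0)\in X_{\gamma,1}$; note also that \eqref{3.2} with $\mu$ replaced by $1$ holds automatically since $\mu\le 1$). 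This differs from the paper mainly in presentation: the paper, while stating that Section 2 is a special case of the quasilinear theory, displays a short self-contained contraction-mapping proof for the critical case $\mu=\mu_c=2\beta-1+1/p$, solving $v=e^{-At}\ast G(u,u)$, $u=v+u_*$, $u_*=e^{-At}u_0$, in a small ball of ${_0\EE}_\mu(a)$ via ${_0\EE}_\mu(a)\hookrightarrow{_0H}^{1-\beta}_{p,\mu}((0,a);X_\beta)\hookrightarrow L_{2p,\tau}((0,a);X_\beta)$ with $2(1-\tau)=1-\mu$; your route buys brevity by quoting the abstract theorem, the paper's buys a transparent, elementary argument.

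One point in your final paragraph would not survive being carried out as written: on the critical line the Sobolev indices of ${_0\EE}_\mu(a)$ and $L_{2p,\tau}((0,a);X_\beta)$ coincide, so the embedding constant is independent of $a$ and \emph{cannot} be made small by shrinking $a$. The actual mechanism is different: smallness of the inhomogeneous contribution comes from $u_*=e^{-At}u_0\in L_{2p,\tau}((0,a);X_\beta)$ (which uses $u_0\in D_A(2\beta-1,p)\subset D_A(\beta-1+\tau-1/2p,2p)$), whose norm on $(0,a)$ tends to $0$ as $a\to0$, while the quadratic term in $v$ is absorbed by choosing the radius $r$ of the ball small. Since your proof in fact rests on Theorem \ref{main}, whose hypotheses you verified, this misstatement does not invalidate it; but the detailed critical-case argument you propose to present would have to follow the mechanism just described rather than a smallness-of-embedding-constant argument.
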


\noindent
Hence, $\mu$ is {\em subcritical} for \eqref{3.1} if strict inequality holds in \eqref{3.2}, and {\em critical} otherwise. The case 
$\beta\leq 1/2$ is always subcritical, and if $\beta>1/2$ then $\mu_c:= 2\beta -1+1/p$ is the {critical weight} and
$$ X_{\gamma,\mu_c} = D_A(2\beta-1,p)$$
is the {critical space} for (\ref{3.1}). 
We observe again that $p$ appears only as a microscopic parameter.

\bigskip
\noindent
{\bf Main idea for the proof of Theorem \ref{thm1}}.
The semilinear case \eqref{3.1} with bilinear nonlinearity is considerably simpler than the quasilinear case \eqref{qpp},
and we give here an outline of the proof for this simpler case.
The arguments then are rather short and employ the contraction mapping principle and several sharp embeddings.
We focus on the critical case, i.e. we assume that $\beta>1/2$ and we choose $\mu=\mu_c=2\beta -1 +1/p$.

The fixed point equation reads
$$ v = Tv:=e^{-At}* G(u,u), \quad v=u-u_*, \quad u_*= e^{-At}u_0,$$
in a ball $\BB_r:= \bar{B}_{{_0\EE}_\mu(a)}(0,r)$ in the space
$${_0\EE}_\mu(a) = {_0H}^1_{p,\mu}((0,a);X_0)\cap L_{p,\mu}((0,a);X_1),$$
where $r$ and $a\in (0,a_0)$, for some fixed $a_0$, are at our disposal. 
Here  ${_0H}^1_{p,\mu}((0,a);X_0)$ denotes the functions $v\in H^1_{p,\mu}((0,a);X_0)$ with $v(0)=0$.

We estimate as follows, with  maximal regularity constant $M=M(a_0)\geq1$ and $1-\mu= 2(1-\tau)$,
\begin{equation*}
\begin{aligned}
|Tv|_{{_0\EE_\mu(a)}} &\leq M |G(u,u)|_{L_{p,\mu}(X_0)} \leq MC | |u|^2_{X_\beta}|_{L_{p,\mu}}\\
& =MC |u|^2_{L_{2p,\tau}(X_\beta)}\leq 2MC [ |u_*|^2_{L_{2p,\tau}(X_\beta)}+|v|^2_{L_{2p,\tau}(X_\beta)}].
\end{aligned}
\end{equation*}
As $u_0\in D_A(2\beta-1,p)\subset D_A(2\beta-1,2p)=D_A(\beta -1 + \tau -1/2p, 2p)$ we obtain
$$u_*\in L_{2p,\tau}((0,a);X_\beta),$$
see \cite[Proposition 3.4.3]{PrSi16}. 
The first term $2MC|u_*|^2$ can be made small, say smaller than $r/2$, by choosing $a\in (0,a_0)$ small.
Next we have the embeddings
\begin{equation*}
\begin{aligned}
{_0\EE}_{\mu}(a) &\hookrightarrow {_0H}^{1-\beta}_{p,\mu}((0,a); X_\beta) \quad \mbox{(mixed derivative theorem)}\\
& \hookrightarrow L_{2p,\tau}((0,a);X_\beta)\quad \mbox{(Sobolev embedding)},
\end{aligned}
\end{equation*}
as the Sobolev indices for these spaces are the same, 
$$ 1-\beta -1/p -(1-\mu) = -1/2p-(1-\tau)  \quad \mbox{\Black{as}} \quad 2\beta-1= \mu -1/p.$$
{We refer to \cite[Corollary 1.4]{MeVe12} for embedding results in weighted Bessel-potential spaces.}
Note that the  embedding constants do not depend on $a>0$.
So choosing $r>0$ small enough, the remaining term $2MC|v|^2$
will also be small, say smaller than $r/2$. This shows that $T:\BB_r\to \BB_r$ is a self-map. The contraction property is proved in a similar way, by the estimate
\begin{equation*}
\begin{aligned}
|Tv_1-Tv_2|_{{_0\EE}(a)}&\leq M|G(u_1,u_1)-G(u_2,u_2)|_{L_{p,\mu}(X_0)}\\
&\le  MC\Big(|u_1|_{L_{2p,\tau}(X_\beta)}+|u_2|_{L_{2p,\tau}(X_\beta)}\Big) |v_1-v_2|_{L_{2p,\tau}(X_\beta)}.
\quad\square
\end{aligned}
\end{equation*}

\medskip
\noindent
Instead of choosing $a>0$ small, we may instead, for a given $a>0$, choose $r>0$ small enough to obtain a unique solution on $(0,a)$.
If $0\in\rho(A)$ then the maximal regularity constant is independent of $a$ and $r>0$ may be chosen uniformly in $a>0$, 
to obtain global existence and exponential stability. 
\begin{corollary} 
\label{cor2}  
Let the assumptions of Theorem \ref{thm1} hold. Then
\begin{enumerate}
\item[{\bf (i)}] For any given $a>0$ there is $r=r(a)>0$ such that the solution of (\ref{3.1}) exists on $[0,a]$, 
whenever $|u_0|_{X_{\gamma,\mu}}\leq r$.
\vspace{1mm}
\item[{\bf (ii)}] If $0\in\rho(A)$ then $r>0$ is independent of $a$.
\vspace{1mm}
\item[{\bf (iii)}] If $0\in\rho(A)$  and $1/p<1-\beta$, then the trivial solution of (\ref{3.1}) is exponentially stable in the state space $X_{\gamma,1}$. Moreover, there is $r_0>0$ such that the solution $u(t)$ of (\ref{3.1}) converges exponentially to zero in $X_{\gamma,1}$, provided $|u_0|_{X_{\gamma,\mu}}\leq r_0$.
\end{enumerate}
\end{corollary}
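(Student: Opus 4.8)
The plan is to obtain Corollary \ref{cor2} as a quantitative refinement of the fixed-point argument sketched for Theorem \ref{thm1}, tracking the dependence of the radius $r$ and the interval length $a$ on one another, and then exploiting the $a$-independence of the maximal regularity constant when $0\in\rho(A)$.

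For part \textbf{(i)}, I would redo the self-map estimate $|Tv|_{{_0\EE}_\mu(a)}\le 2MC\big(|u_*|_{L_{2p,\tau}(X_\beta)}^2+|v|_{L_{2p,\tau}(X_\beta)}^2\big)$ but now keeping $a$ fixed. The embedding ${_0\EE}_\mu(a)\hookrightarrow L_{2p,\tau}((0,a);X_\beta)$ has an $a$-independent norm, so the term $2MC|v|^2\le 2MC C_{\mathrm{emb}}^2 r^2$ is controlled by choosing $r$ small, exactly as before. The new point is the linear term: since $u_*=e^{-At}u_0$ depends linearly on $u_0$, one has $|u_*|_{L_{2p,\tau}((0,a);X_\beta)}\le C(a)|u_0|_{X_{\gamma,\mu}}$ with a finite constant $C(a)$ for each fixed $a$ (this is again \cite[Proposition 3.4.3]{PrSi16}, applied on $(0,a)$). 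Thus $2MC|u_*|^2\le 2MC\,C(a)^2|u_0|_{X_{\gamma,\mu}}^2$, which is $\le r/2$ as soon as $|u_0|_{X_{\gamma,\mu}}\le r(a)$ for a suitably small $r(a)$. The contraction estimate is handled identically since it only involves the $L_{2p,\tau}(X_\beta)$-norms of $u_1,u_2$, which are bounded on $\BB_r$. Then $T$ is a contractive self-map of $\BB_r\subset{_0\EE}_\mu(a)$, so a unique solution on $[0,a]$ exists.

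For part \textbf{(ii)}, the key fact is that when $0\in\rho(A)$ the operator $A$ has maximal $L_p$-regularity with a constant $M$ that does \emph{not} deteriorate as $a\to\infty$; likewise the semigroup estimate giving $|e^{-At}u_0|_{L_{2p,\tau}((0,\infty);X_\beta)}\le C|u_0|_{X_{\gamma,\mu}}$ holds on the half-line with $C$ independent of $a$, because the resolvent bound at $0$ supplies the needed exponential decay (this is where $D_A(2\beta-1,p)$ being the correct trace space is used, together with $2\beta-1<1$). Consequently all three constants $M$, $C$, $C_{\mathrm{emb}}$ in the estimates above are $a$-independent, so the threshold $r$ can be chosen uniformly in $a$; letting $a\to\infty$ yields a global solution.

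For part \textbf{(iii)}, with $0\in\rho(A)$ and $1/p<1-\beta$ one additionally has the continuous embedding $X_{\gamma,1}=(X_0,X_1)_{1-1/p,p}\hookrightarrow X_\beta$ noted in the introduction, so the state space sits inside $X_\beta$ and the nonlinearity $G(u,u)$ is controlled by $|u|_{X_{\gamma,1}}^2$. I would run the fixed-point argument in the unweighted class $\EE_1$ on $(0,\infty)$ — or equivalently use the instantaneous regularization from Theorem \ref{thm1} to pass from $X_{\gamma,\mu}$-data to an $X_{\gamma,1}$-trajectory after an arbitrarily short time — and invoke the exponential decay of $e^{-At}$ on $X_{\gamma,1}$ (valid since $0\in\rho(A)$ and $-A$ generates an analytic semigroup, so $s(-A)<0$ on the relevant invariant subspace, or simply $\|e^{-At}\|_{\cB(X_{\gamma,1})}\le Me^{-\omega t}$ for some $\omega>0$). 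A standard Gronwall/bootstrap argument on the Duhamel formula $u(t)=e^{-At}u_0+\int_0^t e^{-A(t-s)}G(u(s),u(s))\,ds$, using the weighted $L_p$-in-time norms with an exponential weight $e^{\omega' t}$ for $0<\omega'<\omega$, then shows $|u(t)|_{X_{\gamma,1}}\to0$ exponentially provided $|u_0|_{X_{\gamma,\mu}}\le r_0$ is small enough to keep the trajectory in the contraction ball; exponential stability of the trivial solution follows.

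The main obstacle is part \textbf{(iii)}: one must marry the time-weighted low-regularity setting used for well-posedness with an exponentially weighted estimate in the \emph{unweighted} state space $X_{\gamma,1}$, and verify that the bilinear term closes in the exponentially weighted norm without losing the smallness — i.e. that $|e^{\omega t}G(u,u)|_{L_{p}(X_0)}\lesssim \big(\sup_t e^{\omega t}|u|_{X_{\gamma,1}}\big)\cdot|e^{\omega t}u|_{L_p(X_{\gamma,1})}$ or a similar product estimate, which requires the embedding $X_{\gamma,1}\hookrightarrow X_\beta$ and careful use of maximal $L_p$-regularity with the shifted operator $A-\omega$, still admissible for small $\omega$ since $0\in\rho(A)$. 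Parts \textbf{(i)} and \textbf{(ii)} are essentially bookkeeping on the already-proven estimates.
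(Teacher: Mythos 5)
Your proposal is correct and follows essentially the route the paper intends: parts (i) and (ii) are exactly the remark preceding the corollary (interchange the roles of $a$ and $r$ in the contraction argument, and use that for $0\in\rho(A)$ the maximal regularity and trace-space constants are uniform on the half-line), and part (iii) is the standard exponential-shift argument, with the hypothesis $1/p<1-\beta$ entering precisely through the embedding $X_{\gamma,1}\hookrightarrow X_\beta$ and the Lipschitz dependence of Theorem \ref{thm1} transferring smallness from $X_{\gamma,\mu}$ to $X_{\gamma,1}$ after an instant of regularization. No gaps beyond routine bookkeeping.
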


\medskip
\goodbreak
\noindent
Concerning {\em conditional global existence} we can prove the following result.
\begin{corollary} 
\label{cor1}  
The local solution of Theorem \ref{thm1} exists globally, provided
\begin{enumerate}
\item[{\bf (i)}] $u([0,t_+))\subset X_{\gamma,\mu}$ is bounded in the subcritical case; or
\vspace{1mm}
\item[{\bf (ii)}] $u([0,t_+))\subset X_{\gamma,\mu_c}$ is relatively compact in the critical case.
\end{enumerate}
\end{corollary}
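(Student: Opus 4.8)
The plan is to argue by contradiction. Suppose $t_+ := t_+(u_0) < \infty$; we show the solution can be continued beyond $t_+$, contradicting the maximality of the existence interval. The scheme is the same in both cases. By the semigroup property of solutions together with the instantaneous regularization in Theorem~\ref{thm1}, for each $t_0 \in (0,t_+)$ the shift $s \mapsto u(t_0+s)$ solves \eqref{3.1} on $(0,t_+-t_0)$ with initial value $u(t_0) \in X_{\gamma,1} \hookrightarrow X_{\gamma,\mu}$, which is an admissible datum for Theorem~\ref{thm1}. Hence it suffices to produce a number $a_* > 0$, \emph{independent of $t_0$ in a left neighbourhood of $t_+$}, such that the solution starting from $u(t_0)$ exists on $[0,a_*]$: choosing $t_0 \in (t_+ - a_*,\,t_+)$ and gluing the two solutions by uniqueness then yields a solution of the maximal regularity class on $[0,\,t_0+a_*] \supsetneq [0,t_+)$, the desired contradiction. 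Everything thus reduces to a uniform lower bound for the existence time over the relevant set of restart values.

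Next we isolate how the initial value enters the local construction. Reexamining the contraction-mapping proof of Theorem~\ref{thm1}, the datum $w$ appears only through $u_* = e^{-A\cdot}w$, and only via the scalar quantity
$$ \phi(a,w) := |e^{-A\cdot}w|_{L_{2p,\tau}((0,a);X_\beta)}, \qquad 1-\mu = 2(1-\tau). $$
Indeed, with $M$ the maximal regularity constant on $(0,a_0)$, $C$ the bilinear bound for $G$, and $\kappa_0$ the ($a$-independent) constant of the embedding ${_0\EE}_\mu(a) \hookrightarrow L_{2p,\tau}((0,a);X_\beta)$, the self-map and contraction estimates in that proof take the form $|Tv| \le 2MC(\phi(a,w)^2 + \kappa_0^2|v|_{{_0\EE}_\mu(a)}^2)$ and $|Tv_1-Tv_2| \le 2MC\kappa_0(\phi(a,w)+\kappa_0 r)|v_1-v_2|_{{_0\EE}_\mu(a)}$ on $\bar{B}_{{_0\EE}_\mu(a)}(0,r)$. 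Hence there are a fixed radius $r_* = r_*(M,C,\kappa_0) > 0$ and a threshold $\epsilon_* = \epsilon_*(M,C,\kappa_0) > 0$ such that
$$ a \le a_0, \quad \phi(a,w) \le \epsilon_* \;\;\Longrightarrow\;\; T \text{ is a self-map and contraction on } \bar{B}_{{_0\EE}_\mu(a)}(0,r_*), $$
so that \eqref{3.1} has a solution on $[0,a]$ with datum $w$. Thus the whole matter is to choose $a_*$ so that $\phi(a_*,w) \le \epsilon_*$ holds \emph{uniformly} over $\{u(t_0): t_0 \in (0,t_+)\}$.

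In the subcritical case, $w \in X_{\gamma,\mu} = D_A(\mu-1/p,p)$ with $\mu-1/p > 2\beta-1$, and a standard decay estimate for the analytic semigroup $e^{-A\cdot}$ on this real interpolation space gives $\phi(a,w) \le C a^{\delta}|w|_{X_{\gamma,\mu}}$ for $a \le a_0$ and some $\delta = \delta(\mu,\beta,p) > 0$ (precisely $2\delta = 1+\mu-1/p-2\beta > 0$). Since $R := \sup_{t\in[0,t_+)}|u(t)|_{X_{\gamma,\mu}} < \infty$ by hypothesis~(i), any $a_* \in (0,a_0]$ with $C a_*^{\delta} R \le \epsilon_*$ works for every $u(t_0)$, and the contradiction follows. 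In the critical case $\mu=\mu_c$, so $X_{\gamma,\mu}=X_{\gamma,\mu_c}=D_A(2\beta-1,p)$ and no power of $a$ is available: from \cite[Proposition 3.4.3]{PrSi16} (using $D_A(2\beta-1,p) \subset D_A(2\beta-1,2p) = D_A(\beta-1+\tau-1/2p,2p)$) one only gets $\phi(a,w) \le C_0|w|_{X_{\gamma,\mu_c}}$ for $a\le a_0$, together with $\phi(a,w)\downarrow 0$ as $a\downarrow 0$ for each fixed $w$, by dominated convergence. The family $\{\phi(a,\cdot)\}_{0<a\le a_0}$ thus consists of functions that are Lipschitz on $X_{\gamma,\mu_c}$ with the common constant $C_0$ (since $w\mapsto\phi(a,w)$ is the norm of a linear function of $w$), monotone decreasing in $a$, and convergent pointwise to $0$; by Dini's theorem the convergence is uniform on every compact subset. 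Applying this to $K := \overline{u([0,t_+))}$, which is compact by hypothesis~(ii), we obtain $a_* \in (0,a_0]$ with $\sup_{w\in K}\phi(a_*,w) \le \epsilon_*$; as $u(t_0)\in K$ for all $t_0\in(0,t_+)$, this $a_*$ works for every restart, and the contradiction follows as before.

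\textbf{Main obstacle.} The subcritical case is essentially bookkeeping: once the spare power of time furnished by the strict inequality in \eqref{3.2} is identified, the existence time is automatically locally uniform in the $X_{\gamma,\mu}$-norm of the data. The genuine point is the critical case, where boundedness of the orbit is \emph{not} sufficient; the argument hinges on the upgrade from pointwise decay of $\phi(a,\cdot)$ to decay that is uniform on compact sets, which is exactly why compactness of $u([0,t_+))$ in $X_{\gamma,\mu_c}$ is the right hypothesis in~(ii). The care needed there lies in verifying that the self-map radius $r_*$ and the threshold $\epsilon_*$ in the fixed-point scheme depend on the datum $w$ only through $\phi(a,w)$, and that the linear estimate from \cite{PrSi16} indeed furnishes the uniform Lipschitz bound on $\phi(a,\cdot)$ required for Dini's theorem.
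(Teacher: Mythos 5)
Your proposal is correct, but it does not follow the paper's route, most visibly in the subcritical case. For (i) the paper never restarts the solution: it proves an a priori bound on the maximal regularity norm. With $\alpha=\frac{\beta-(\mu-1/p)}{1-(\mu-1/p)}$, interpolation gives $|G(u,u)|_{X_0}\le C|u|_{X_1}^{2\alpha}|u|_{X_{\gamma,\mu}}^{2(1-\alpha)}$, and since $2\alpha<1$ exactly when strict inequality holds in \eqref{3.2}, maximal regularity yields $|u|_{\EE_1(a)}\le C_1|u_0|_{X_{\gamma,1}}+C_2|u|_{\EE_1(a)}^{2\alpha}$ uniformly in $a<t_+$, hence $u\in C([0,t_+];X_{\gamma,1})$ and the solution continues; no lower bound on the existence time is ever needed. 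You instead show that in the subcritical case the local existence time is bounded below in terms of $|u_0|_{X_{\gamma,\mu}}$ alone (via the spare power $a^\delta$, $2\delta=1+\mu-1/p-2\beta>0$) and restart near $t_+$; this is a valid and equally standard alternative, and it has the merit of treating (i) and (ii) by one uniform re-entry-time scheme. For (ii) the two arguments are close in spirit -- both reduce to a uniform local existence time on the compact set $\Gamma=\overline{u([0,t_+))}$ -- but the paper gets the uniformity from the local well-posedness theorem itself (each point of $\Gamma$ comes with a ball of data and a common existence time, then a finite subcover), whereas you re-derive it inside the contraction scheme by isolating that the datum enters only through $\phi(a,w)=|e^{-A\cdot}w|_{L_{2p,\tau}((0,a);X_\beta)}$ and upgrading pointwise decay of $\phi(a,\cdot)$ to uniform decay on compacts via the uniform Lipschitz bound and Dini; this is longer but makes transparent why compactness, not boundedness, is the right hypothesis at the critical weight. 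Two details you share with the paper's level of brevity but should be aware of: the concatenation at the restart time needs the standard remark that the restarted solution is unweighted away from its own initial time, so the glued function lies in the $\EE_\mu$-class on an interval strictly exceeding $t_+$; and in the subcritical case your reuse of the critical-case fixed-point estimates requires that ${_0\EE}_\mu(a)\hookrightarrow L_{2p,\tau}((0,a);X_\beta)$ still holds with an $a$-independent constant, which is true because the relevant Sobolev index inequality is then strict.
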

\begin{proof}
{\bf (i)} Suppose $\mu$ is subcritical, i.e. $\mu-1/p>2\beta-1$, and let
$$\alpha:=\frac{\beta -(\mu-1/p)}{1-(\mu-1/p)}. $$
By interpolation theory we obtain
$(X_{\gamma,\mu}, X_1)_{\alpha,1}=(X_0,X_1)_{\beta,1}\hookrightarrow X_\beta.$
Hence,
$$ |G(u,u)|_{X_0}\le C |u|^2_{X_\beta}\le C |u|^{2\alpha}_{X_1} |u|^{2(1-\alpha)}_{X_{\gamma,\mu}}.$$
To establish global existence we may assume w.l.o.g that $u_0\in X_{\gamma,1}$.
Let $u$ be the unique solution of \eqref{3.1} with initial value $u_0\in X_{\gamma,1}$, defined on
its maximal existence interval $[0,t_+(u_0))$ and suppose that $t_+:=t_+(u_0)<\infty$.
Then we obtain for any $a\in (0,t_+)$ by means of maximal regularity
\begin{equation*}
\begin{aligned}
|u|_{\EE_1(a)}
&\le C_1 |u_0|_{X_{\gamma,1}} + M |G(u,u)|_{L_p((0,a);X_0)}\\
& \le C_1 |u_0|_{X_{\gamma,1}} + MC |u|^{2(1-\alpha)}_{L_\infty((0,t_+); X_{\gamma,\mu})} |u|^{2\alpha}_{\EE_1(a)} \\
&\le C_1 |u_0|_{X_{\gamma,1}} + C_2 |u|^{2\alpha}_{\EE_1(a)},              
\end{aligned}
\end{equation*}
where $M=M(t_+)$ is the constant of maximal regularity.
As $2\alpha<1$, we conclude that $|u|_{\EE_1(a)}$ is bounded,  uniformly in $a\in (0,t_+)$.
Therefore, $|u|_{\EE_1(t_+)}$ is bounded, implying that $u\in C([0,t_+]; X_{\gamma,1})$.
Hence, the solution can be continued beyond $t_+$, a contradiction.
\medskip\\
{\bf (ii)} 
Let $u_0\in X_{\gamma,\mu_c}$ be given and suppose that $t_+=t_+(u_0)<\infty$.
Then the set $\Gamma:= \overline{u([0,t_+))}$ is compact in $X_{\gamma,\mu_c}$.
It follows from Theorem~\ref{thm1} and a covering argument that there is  
$\delta>0$ such that equation \eqref{3.1} has for each 
$v_0\in \Gamma$ a unique solution $v\in \EE_{1,\mu_c}(\delta)$.
But this implies that the solution $u$ can be continued beyond $t_+$, leading once more to a contradiction.
\end{proof}

Next  we prove  a result of Serrin type which states that global existence is equivalent to an integral a priori bound.

\begin{theorem} 
\label{thm5} 
Let $p\in (1,\infty)$, $\beta>1/2$ and $\mu :=2\beta -1 +1/p\leq 1$ the critical weight.
 Assume $u_0\in X_{\gamma,\mu}$, and let $u$ denote the unique solution of \eqref{3.1} 
 defined by Theorem \ref{thm1}, with maximal interval of existence $[0,t_+)$. Then
\begin{enumerate}
\item[{\bf (i)}] $u\in L_p((0,a);X_\mu)$, for each $a<t_+$.
\vspace{1mm}
\item[{\bf (ii)}] If $t_+<\infty$ then $ u\not\in L_p((0,t_+);X_\mu)$.
\end{enumerate}
In particular, the solution exists globally if $u\in L_p((0,a);X_\mu)$ for any finite number~$a$ with $a\le t_+$.
\end{theorem}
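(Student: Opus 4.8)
The plan is to set up the problem in the critical time-weighted scale with weight $\mu=2\beta-1+1/p$, exactly as in the sketch for Theorem~\ref{thm1}, and then extract a slightly stronger integrability statement out of the same fixed-point argument. For part~\textbf{(i)}, the starting observation is that $X_\mu=(X_0,X_1)_\mu={\sf D}(A^\mu)$ and that, by the mixed derivative theorem together with the sharp embeddings for weighted Bessel-potential spaces (the same ones quoted after the proof of Theorem~\ref{thm1}, cf.\ \cite[Corollary~1.4]{MeVe12}), one has
$$
{_0\EE}_\mu(a)={_0H}^1_{p,\mu}((0,a);X_0)\cap L_{p,\mu}((0,a);X_1)\hookrightarrow H^{1-\mu}_{p,\mu}((0,a);X_\mu)\hookrightarrow L_p((0,a);X_\mu),
$$
where the last embedding holds because the Sobolev index of $H^{1-\mu}_{p,\mu}$ in the $X_\mu$-scale is $(1-\mu)-(1-\mu)-1/p=-1/p$, matching that of $L_p$. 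One must also account for the singular (non-time-shifted) part $u_*=e^{-At}u_0$: since $u_0\in X_{\gamma,\mu}=D_A(2\beta-1,p)$ and $A$ generates an analytic semigroup with $0$ possibly not in the resolvent set, one shows $u_*\in L_p((0,a);X_\mu)$ by a direct estimate $\|A^\mu e^{-At}u_0\|_{X_0}\lesssim t^{-\mu+(2\beta-1)}\|u_0\|_{D_A(2\beta-1,p)}$-type bound combined with the characterization of $D_A(2\beta-1,p)$ as a real-interpolation space, giving an $L_p$-integrable singularity near $t=0$ precisely because $\mu-(2\beta-1)=1/p$. Putting these together with the solution $u=u_*+v$, $v\in{_0\EE}_\mu(a)$ furnished by Theorem~\ref{thm1} on any $[0,a]$ with $a<t_+$, yields \textbf{(i)}.

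For part~\textbf{(ii)}, argue by contradiction: suppose $t_+<\infty$ but $u\in L_p((0,t_+);X_\mu)$. The idea is to run the contraction argument from a time $t_0$ close to $t_+$ and show that the length of the existence interval obtained this way does not shrink to zero, contradicting maximality. Concretely, for $t_0\in(0,t_+)$ set $v_0:=u(t_0)\in X_{\gamma,1}$ (the solution is in $C((0,t_+);X_{\gamma,1})$ by Theorem~\ref{thm1}) and consider \eqref{3.1} with initial value $v_0$; the nonlinear map is $Tw=e^{-At}*G(w+w_*,w+w_*)$ with $w_*=e^{-At}v_0$. The key point is that the small-ball condition for $T$ to be a self-contraction on a ball in ${_0\EE}_\mu(\delta)$ is controlled by $\|u\|_{L_{2p,\tau}((t_0,t_0+\delta);X_\beta)}$ (with $1-\mu=2(1-\tau)$), and — via the same mixed-derivative/Sobolev chain as above, now read in the $X_\beta$-scale — by the tail $\|u\|_{L_p((t_0,t_+);X_\mu)}$, which tends to $0$ as $t_0\uparrow t_+$ by absolute continuity of the integral. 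Hence there is a fixed $\delta>0$, independent of $t_0$ near $t_+$, on which a solution with data $u(t_0)$ exists; choosing $t_0>t_+-\delta$ continues $u$ beyond $t_+$, a contradiction. The final assertion of the theorem is then the contrapositive of \textbf{(ii)}.

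The main obstacle I anticipate is the second step of \textbf{(ii)}: converting the hypothesis $u\in L_p((0,t_+);X_\mu)$ into a genuine smallness statement for the quantity that actually governs the fixed point, namely an $L_{2p}$-in-time, $X_\beta$-valued norm of $u$ on a short window, with constants uniform as the window slides toward $t_+$. This requires (a) checking that the embedding ${_0\EE}_\mu(\delta)\hookrightarrow L_{2p,\tau}((0,\delta);X_\beta)$ has a constant independent of $\delta\in(0,\delta_0)$ — this is exactly the scaling-invariance of the Sobolev index used in Theorem~\ref{thm1} — and (b) a careful treatment of the contribution of the semigroup term $w_*=e^{-At}u(t_0)$, where $u(t_0)$ lies only in $X_{\gamma,1}$ with norm that may blow up as $t_0\uparrow t_+$; one circumvents this by writing $u$ on $(t_0,t_0+\delta)$ as the superposition $u_*^{(t_0)}+\tilde v$ where $u_*^{(t_0)}$ is the \emph{free} evolution of the \emph{original} data's trace in a \emph{lower-regularity} norm and absorbing the genuinely singular part into the $L_p(X_\mu)$-tail, rather than using the state-space norm at $t_0$ at all. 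Once the uniform embedding constants are in hand, the rest is the standard Banach fixed-point bookkeeping already displayed in the sketch of Theorem~\ref{thm1}.
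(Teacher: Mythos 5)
Part \textbf{(i)} of your proposal is essentially correct, but more circuitous than necessary: by Theorem \ref{thm1} the solution itself lies in $\EE_\mu(a)=H^1_{p,\mu}((0,a);X_0)\cap L_{p,\mu}((0,a);X_1)$ for every $a<t_+$, so the chain $\EE_\mu(a)\hookrightarrow H^{1-\mu}_{p,\mu}((0,a);X_\mu)\hookrightarrow L_p((0,a);X_\mu)$ (mixed derivative theorem plus weighted Sobolev embedding) applies to $u$ directly, and no splitting $u=u_*+v$ is needed. If you do keep the splitting, be careful: the pointwise bound $|A^\mu e^{-At}u_0|_{X_0}\lesssim t^{-1/p}$ is \emph{not} $L_p$-integrable; what saves you is precisely the $L_p$-average characterization of $D_A(2\beta-1,p)$, so that part of your argument must be phrased through the interpolation-space characterization, not through a pointwise decay estimate.

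Part \textbf{(ii)} has a genuine gap, and it sits exactly at the step you flag as the "main obstacle." Your mechanism for converting the hypothesis $u\in L_p((0,t_+);X_\mu)$ into smallness of the quantity that drives the contraction is the claim that $|u|_{L_{2p,\tau}((t_0,t_0+\delta);X_\beta)}$ is controlled, "via the same mixed-derivative/Sobolev chain," by the tail $|u|_{L_p((t_0,t_+);X_\mu)}$. No such embedding exists: passing from $L_p(X_\mu)$ to $L_{2p,\tau}(X_\beta)$ would require gaining time integrability from $p$ to $2p$ and spatial regularity from $X_\mu$ to $X_\beta$ (generically $\mu<\beta$, since $\mu-\beta=\beta-1+1/p$), and an $L_p(X_\mu)$ bound alone provides neither. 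What is true — and this is the key point of the paper's proof, resting on Proposition \ref{interpol} together with the mixed derivative theorem — is only an \emph{interpolation} inequality: $(L_p((t_0,a);X_\mu),\EE_\mu(t_0,a))_{1/2}\hookrightarrow L_{2p,\tau}((t_0,a);X_\beta)$, whence $|G(u,u)|_{L_{p,\mu}((t_0,a);X_0)}\le C_2\,|u|_{L_p((t_0,a);X_\mu)}\,|u|_{\EE_\mu(t_0,a)}$ with $C_2$ independent of $a<t_+$. With this estimate the argument is not a re-run of the fixed point with a $t_0$-uniform existence time (which, as you yourself observe, founders on the trace at $t_0$); it is an absorption argument: choose $t_0$ so that the tail satisfies $|u|_{L_p((t_0,t_+);X_\mu)}\le\eta:=1/(2MC_2)$, apply maximal regularity to $u$ on $(t_0,a)$, absorb the term $MC_2\eta\,|u|_{\EE_\mu(t_0,a)}$, and conclude $|u|_{\EE_\mu(t_0,a)}\le 2M|u(t_0)|_{X_{\gamma,\mu}}$ uniformly in $a<t_+$. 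This yields continuity of $u$ up to $t_+$ with values in the critical space, and Theorem \ref{thm1} then continues the solution beyond $t_+$, the desired contradiction. Note that only a single, fixed $t_0$ is used, so the possible blow-up of $|u(t_0)|_{X_{\gamma,1}}$ as $t_0\uparrow t_+$ — your second worry — never enters, and no uniform-in-$t_0$ lower bound on an existence time is needed. Your proposed circumvention (superposing the free evolution of "the original data's trace in a lower-regularity norm" and "absorbing the singular part into the tail") is not worked out, and without the bilinear estimate above I do not see how to make either the smallness or the uniform $\delta$ rigorous; as it stands, part (ii) of your proposal does not close.
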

\noindent
We remind that $X_\mu=(X_0,X_1)_\mu$ denote the complex interpolation spaces.
It is interesting to observe that the 
Sobolev indices of the spaces $\EE_\mu(a)$, $L_p((0,a);X_\mu)$ and $C([0,a];X_{\gamma,\mu})$ are all the same, given by $\mu-1/p$.

\begin{proof}
{\bf (i)} Let $a\in (0,t_+)$ be fixed. 
By the mixed derivative theorem of Sobolevskii and {Sobolev embedding in weighted spaces, 
see for instance \cite[Corollary 1.4]{MeVe12},} we have
$$ \EE_\mu(a)= H^1_{p,\mu}((0,a); X_0)\cap L_{p,\mu}((0,a);X_1)\hookrightarrow H^{1-\mu}_{p,\mu}((0,a);X_\mu)\hookrightarrow L_p((0,a);X_\mu).$$

{\bf (ii)} 
From the mixed derivative theorem, Proposition \ref{interpol} in the appendix, and {Sobolev embedding in weighted spaces} follows
\begin{align*}
(L_{p}((0,a);X_\mu), \EE_\mu(a))_{1/2} &\hookrightarrow (L_{p}((0,a);X_\mu), H^{1-\alpha}_{p,\mu}((0,a);X_\alpha))_{1/2}\\
&= H^{(1-\alpha)/2}_{p,(1+\mu)/2}((0,a);X_{(\mu+\alpha)/2}) \hookrightarrow L_{2p,\tau}((0,a); X_\beta),
\end{align*}
where $\alpha=1-1/p$ and  $2(1-\tau)=1-\mu$, i.e.\ $ \tau =(1+\mu)/2$. \\

Suppose $t_+<\infty$ and let $a_0\in (0,t_+)$ be fixed.
Employing the interpolation inequality, the quadratic estimate $|G(u,u)|_{X_0}\leq C_1 |u|_{X_\beta}^2$ implies
\begin{equation*}
 |G(u,u)|_{L_{p,\mu}((a_0,a);X_0)} \leq C_1 |u|^2_{L_{2p,\tau}((a_0,a);X_\beta)}
 \leq C_2 |u|_{L_p((a_0,a);X_\mu)} |u|_{\EE_\mu(a_0,a)},
\end{equation*}
where the constant $C_2$ is independent of $a\in (a_0,t_+)$.
Let $M$ be the constant of maximal regularity for the interval $[0,t_+)$,
and let $\eta=1/(2MC_2)$.
We choose $t_0\in (a_0,t_+)$ sufficiently close to $t_+$ such that $|u|_{L_p((t_0,t_+);X_\mu)}\le \eta$.
Then by maximal regularity  we obtain
\begin{equation*}
 |u|_{\EE_{\mu}(t_0,a)} \leq M\big(|u(t_0)|_{X_{\gamma,\mu}} + C_2\eta |u|_{\EE_{\mu}(t_0,a)}\big).
\end{equation*}
By the definition of $\eta$, this yields 
$$|u|_{\EE_{\mu}(t_0,a)} \leq 2M|u(t_0)|_{X_{\gamma,\mu}}$$
for any $a\in (t_0,t_+)$.
Therefore, $u|_{(t_0,t_+)}\in \EE_1(t_0,t_+)\hookrightarrow C([t_0,t_+],X_\mu)$.
This implies that the solution $u$ can be continued beyond $t_+$, leading to a contradiction. 
\end{proof}

\subsection{A Counterexample}
We want to show that Theorem \ref{thm1}, and hence also Theorem \ref{main}, 
are optimal in the sense that, generically, \eqref{3.1} is not well-posed in spaces 
strictly larger than the critical space $X_{\gamma,\mu_c}$.

\medskip

\noindent
{\bf (i)}\, We choose a sequence $a_k>0$ with $a_k\to\infty$, and set 
\begin{equation*}
 X_0 =l_2(\NN),\quad (Au)_k= a_k u_k,\quad X_1={\sf D}(A)= l_2(\NN;a_k).
 \end{equation*}
This operator is selfadjoint and positive definite in the Hilbert space $X_0$. Furthermore, 
{by complex interpolation in weighted $l_2$-spaces} we have
$X_\beta = l_2(\NN;a_k^\beta)={\sf D}(A^\beta)$, 
for all $\beta\geq0$. Next we define a symmetric bilinear operator $G$ by means of
$$G:X_\beta\times X_\beta \to X_0,\quad G(u,v)_k := a_k^{2\beta} u_k v_k.$$
Obviously, $G$ is bilinear, and it also bounded by the Cauchy-Schwarz inequality, as $l_2(\NN)\hookrightarrow l_4(\NN)$.

Consider the evolution equation
\begin{equation} 
\label{1}
\partial_t u +Au =G(u,u),\; t>0,\quad u(0)=u_0,
\end{equation}
in $X_0$. Then we are in the situation of Theorem \ref{thm1} hence we have local well-posedness in $L_{p,\mu}$ for initial data in $X_{\gamma,\mu}=D_A(\mu-1/p,p)$,
for all $\mu\geq\mu_c$, where the critical weight $\mu_c$ is defined by
$$ 2\beta-1 = \mu_c-1/p.$$
Below we require $\beta>1/2$ and $\beta \leq 1-1/2p$. By an appropriate choice of the coefficients $a_k$, we want to show that this problem is ill-posed for any weight $1/p<\mu<\mu_c$, showing that Theorem \ref{thm1} cannot be improved, 
and hence the condition \eqref{3.2} is sharp.

\medskip

\noindent
{\bf (ii)} In components, problem \eqref{1}  reads
\begin{align*}
\partial_t u_k +a_k u_k &= a_k^{2\beta} u_k^2,\quad t>0,\; k\in\NN,\\
u_k(0) &= u^0_k.
\end{align*}
This system can be solved explicitly. In fact, by means of the scaling $v_k(t)= a_k^{2\beta-1} u_k(t/a_k)$, the system transforms to
\begin{align*}
\partial_t v_k +v_k &=  v_k^2,\quad t>0,\; k\in\NN,\\
v_k(0) &= v^0_k= a_k^{2\beta-1}u_k^0.
\end{align*}
Solving the bi-stable equation
$$ \partial_t w +w = w^2,\; t>0,\quad w(0)=w_0,$$
we get
$$w(t)= \frac{w_0 e^{-t}}{ 1-w_0 +e^{-t}w_0} = \frac{w_0 e^{-t}}{ 1-w_0(1- e^{-t})}.$$
So $w(t)$ exists globally to the right if $w_0\leq1$ and a blow up occurs if $w_0>1$.
This yields for $v_k(t)$ the formula
$$v_k(t) = \frac{v_k^0 e^{-t}}{1-v_k^0(1-e^{-t})},$$
hence inverting the scaling
$$ u_k(t) = \frac{ u_k^0 e^{-a_k t}}{1-a_k^{2\beta-1} u_k^0(1-e^{-a_kt})},\quad t>0,\; k\in\NN.$$
This implies that whenever a solution of class $L_{p,\mu}$, i.e.
$$u\in H^1_{p,\mu}((0,a); X_0)\cap L_{p,\mu}((0,a);X_1)$$
exists, then the initial value $u_0$ must satisfy
\begin{equation}\label{nec-cond}
\overline{\lim}_{k\to\infty} a_k^{2\beta-1}u_k^0 \leq 1.
\end{equation}
{\bf (iii)}
Now suppose that $u_k^0\geq 0$ for all $k$, and set $a_k = 2^k$. If \eqref{nec-cond} holds, then $|u_k^0|\leq c 2^{k(1-2\beta)}$, for some constant $c>1$, hence
$$ |A^s u_0|^2_{X_0} = \sum_{k\geq1} 2^{2ks}|u_k^0|^2 \leq c\sum_{k\geq1} 2^{2k(s-2\beta +1)} <\infty,$$
for $s<2\beta-1$. This implies that whenever a solution of class $L_{p,\mu}$ exists, then $u_0\in {\sf D}(A^s)$ for all $s<2\beta-1$.

We thus find ourselves in the following situation:
If there is some $\mu\in (1/p,\mu_c)$ and some initial value 
$u_0\in X_{\gamma,\mu}=(X_0,X_1)_{\mu-1/p,p}$ such that 
\eqref{1} has a solution in the class $H^1_{p,\mu}((0,a);X_1)\cap L_{p,\mu}((0,a);X_0)$ then
$u_0$ must be in $\bigcap_{s<2\beta-1}{\sf D}(A^s)$,
which is a space strictly contained in $(X_0,X_1)_{\mu-1/p,p}$.
Hence the assumption that the problem is well-posed in the class $L_{p,\mu}$ for $\mu<\mu_c$ leads to a contradiction.
\medskip

\noindent
{\bf (iv)} Note that in this example we have global existence in $L_{p,\mu}$ for any $1\geq\mu >1/p$ if the initial data are non-positive.
In that case we have the estimates
$$ |u_k(t)|\leq |u_k^0|e^{-a_kt},\; t\geq0,\; k\in\NN,$$
hence the solution of the nonlinear problem is dominated by the semigroup $e^{-At}$. This shows that in this case we have well-posedness for all initial data, even for $u_0\in X_0$.

We may modify the example in several ways. Replacing the nonlinearity by $-w^3$ we obtain global $L_{p,\mu}$-solutions for all initial values $u_0\in D_A(\mu-1/p,p)$, as well as global exponential stability. And if we replace the nonlinearity by $w|w|$ the signs of the initial conditions are not important.

Such examples show that it may very well happen that \eqref{3.1} is still well-posed in spaces of initial data larger than $X_{\gamma,\mu_c}$, but this involves more structural properties of the equation under consideration.

\subsection{Scaling Invariance}
 Let $A\in\mathcal{BIP}(X_0)$ and $G:\dot{X}_\beta\times \dot{X}_\beta\to X_0$ bounded bilinear.
Here $\dot{X}_\beta$ means the completion of ${\sf D}(A^\beta)$ in the homogeneous norm $|A^\beta\cdot|$. In a similar way we define the spaces 
$\dot{D}_A(\alpha,p)$ as the completions of $D_A(\alpha,p)$ in the homogeneous norms 
$$ |x|_{\dot{D}_A(\alpha,p)} = [\int_0^\infty |r^\alpha A(r+A)^{-1}x|^pdr/r]^{1/p}.$$
We begin with a definition.
\begin{defi}
A family of operators $\{T_\lambda\}_{\lambda>0}\subset \cB(X_0)\cap \cB(X_1)$ is a called a {\bf scaling } for \eqref{3.1} if the following conditions hold.\\
{\bf (i)} $AT_\lambda =\lambda T_\lambda A$, for all $\lambda>0$;\\
{\bf (ii)} $\lambda T_\lambda G(x,x) = G(T_\lambda x, T_\lambda x)$, for all $\lambda>0, \; x\in \dot{X}_\beta$;\\
{\bf (iii)} there are constants $c>0$ and $\delta\in \R$ such that
$$ c^{-1}\lambda^{-\delta} |x|\leq |T_\lambda x|\leq c\lambda^{-\delta}|x|,\quad \lambda>0,\; x\in X_0.$$
\end{defi}
\noindent
It is easy to see that if $u(t)$ is a solution of \eqref{3.1}, then $u_\lambda(t) = T_\lambda u(\lambda t)$ is again a solution of \eqref{3.1} with initial value
$u_\lambda(0) = T_\lambda u_0$.

\medskip

\noindent
From {\bf (i)} we obtain
$$ (z-A) T_\lambda = T_\lambda z - \lambda T_\lambda A = \lambda T_\lambda (z/\lambda -A),$$
hence
$$ (z-A)^{-1} T_\lambda = \frac{1}{\lambda} T_\lambda(z/\lambda -A)^{-1},$$
and so again by {\bf (i)}
\begin{equation}
\label{AT}
 A(z-A)^{-1}T_\lambda = T_\lambda A(z/\lambda -A)^{-1}.
\end{equation}
This implies
\begin{equation*}
\begin{aligned} |T_\lambda x|_{\dot{D}_A(\alpha,p)} &= [ \int_0^\infty |r^\alpha A(r+A)^{-1}T_\lambda x|^pdr/r]^{1/p} \\
&= \lambda^\alpha [ \int_0^\infty |r^\alpha T_\lambda A(r+A)^{-1} x|^pdr/r]^{1/p},
\end{aligned}
\end{equation*}
which by (iii) yields
$$ c^{-1}\lambda^{\alpha-\delta} |x|_{\dot{D}_A(\alpha,p)}\leq |T_\lambda x|_{\dot{D}_A(\alpha,p)}\leq  c\lambda^{\alpha-\delta} |x|_{\dot{D}_A(\alpha,p)}.$$
In particular, the norm of $T_\lambda x$ in $\dot{D}_A(\alpha,p)$ is, up to constants, independent of $\lambda>0$ if and only if $\alpha=\delta$. Thus the spaces
$\dot{D}_A(\delta,p)$ are {\em scaling invariant} for the scaling $T_\lambda$.
From \eqref{AT} we also obtain, with an appropriate contour $\Gamma$
$$ A^\beta T_\lambda x = \frac{1}{2\pi i}\int_\Gamma z^\beta A(z-A)^{-1} T_\lambda x dz/z= \lambda^\beta T_\lambda A^\beta x.$$
Next we employ {\bf (ii)} and {\bf (iii)} to obtain
\begin{align*}
c^{-1} \lambda^{1-\delta}|G(x,x)|&\leq |\lambda T_\lambda G(x,x)|= |G(T_\lambda x, T_\lambda x)|\\
&\leq C |A^\beta T_\lambda x|^2 = C |\lambda^\beta T_\lambda A^\beta x|^2 \leq Cc^2 \lambda^{2\beta-2\delta}|A^\beta x|^2.
\end{align*}
This implies the inequality
$$ |G(x,x)|\leq Cc^3 \lambda^{2\beta-1-\delta}|A^\beta x|^2, \quad \lambda>0,\; x\in X_\beta.$$
If $G$ is nontrivial, i.e.\ $G(x_0,x_0)\neq 0$ for some $x_0\in \dot{X}_\beta$, this implies $\delta = 2\beta-1$, otherwise we obtain a contradiction by letting either $\lambda\to\infty$ or $\lambda\to 0$.

\medskip

\noindent
This shows that the critical weight $\mu_c$ satisfies
$ \mu_c -1/p =2\beta-1 = \delta,$
thereby proving that the critical spaces for \eqref{3.1} are scaling invariant.

\medskip

\noindent
{\bf Example.} {\em The Navier-Stokes equations in $\RR^d$.}\\
Consider the Navier-Sokes problem in $\RR^d$, which reads
\begin{equation*}
\begin{aligned}
\dot{u} -\Delta u +\nabla \pi&= -u\cdot\nabla u &&  \mbox{in }\; \RR^d,\\
{\rm div}\, u &=0  && \mbox{in }\; \RR^d,\\
u(0)&=u_0 &&  \mbox{in }\; \RR^d.
\end{aligned}
\end{equation*}
This problem has the scaling invariance given by 
$$ u_\lambda(t,x) = \sqrt\lambda u(\lambda t,\sqrt\lambda x),\quad \pi_\lambda(t,x) = \lambda \pi(\lambda t, \sqrt\lambda x),$$
i.e.\ we have $T_\lambda u(x) =\sqrt\lambda u(\sqrt\lambda x)$. In the $L_q$-setting, it is easy to compute the scaling number $\delta$ in {\bf (iii)} to the result $\delta= (d/q-1)/2$. Therefore, with
$$\dot{X_\beta}= \dot{H}^{2\beta}_{q,\sigma}(\RR^d),\quad \dot{D}_A(\alpha,p) =\dot{B}_{qp,\sigma}^{2\alpha}(\RR^d)$$
as well as $\beta = (d/q + 1)/4$ we obtain that the critical spaces are the scaling invariant spaces $\dot{B}^{d/q-1}_{qp,\sigma}(\RR^d)$.

\subsection{Independence of the Scale}
Consider the complex interpolation-extrapolation scale $(X_s,A_s)$ 
generated by $A_0:=A\in \mathcal{BIP}(X_0)$ with $X_1={\sf D}(A)$, where we assume w.l.o.g that $A$  is invertible.
Suppose that for some $s_0>0$ we have
$$G:X_{\beta-s/2}\times X_{\beta-s/2} \to X_{-s}\quad \mbox{bilinear bounded}, \quad s\in[0,s_0]$$
We claim that the critical spaces for \eqref{3.1} are independent of $s$. Assume first that $\beta>1/2$. Then we find 
$$\mu_c^0 -1/p= 2\beta -1$$
for the critical weight in case $s=0$ and solutions in the class
$$ u\in H^1_{p,\mu_c^0}(J;X_0) \cap L_{p,\mu_c^0}(J;X_1),$$
for initial values in the critical space $ (X_0, X_1)_{2\beta-1,p}$.

\smallskip
Next  fix any $s\in (0,s_0]$ and set $X_0^{\sf w}=X_{-s}$, $A^{\sf w}=A_{-s}$ and $X_1^{\sf w} =X_{1-s}$. 
Then we have with $\beta^{\sf w}=\beta + s/2$
$$ G:X_{\beta^{\sf w}}^{\sf w}\times X_{\beta^{\sf w}}^{\sf w}\to X^{\sf w}_0\quad \mbox{is bilinear and bounded},$$
hence with the critical weight
$$ \mu_c^s -1/p= 2\beta^{\sf w} -1 = 2\beta -1 +s =\mu_c^0-1/p+s$$
we obtain solutions 
$$ u\in H^1_{p,\mu_c^s}(J;X_0^{\sf w}) \cap L_{p,\mu_c^s}(J;X_1^{\sf w}),$$
for initial values $u_0 \in (X_0^{\sf w}, X_1^{\sf w})_{\mu_c^{\sf w}-1/p,p}$.
But by \eqref{re-int}
$$ (X_0^{\sf w}, X_1^{\sf w})_{\mu_c^{\sf w}-1/p,p}= (X_0^{\sf w}, X_1^{\sf w})_{2\beta^{\sf w}-1,p}= (X_{-s},X_{1-s})_{2\beta -1+s,p}=(X_0,X_1)_{2\beta-1,p},$$
which shows the invariance of the critical spaces w.r.t.\ $s\in[0,s_0]$.

\medskip

\noindent
{\bf Remark.} There is the restriction $\beta^{\sf w}<1$, which means $s<2-2\beta$. This yields an upper bound for $s$. On the other hand, if $s=0$ is subcritical, i.e\ if $\beta\leq 1/2$, the problem will be critical in $X_{-s}$ provided $2\beta-1+s>0$ which means $ s> 1-2\beta$.
Thus we have the window
$$1-2\beta <s<2-2\beta$$ for the best choice of $s$. 

\section{Examples of scalar parabolic equations}
\noindent
In this section we consider three scalar parabolic equations. 
We begin with a very classical one, namely with a famous problem studied by Fujita and later on by  Weissler. 
\medskip

\noindent
{\bf Example 1.} Let $\Omega$ be a bounded domain of class $C^{2}$ in $\RR^d$, and consider the Dirichlet problem
\begin{equation}
\label{eq:Ex1}
\begin{aligned}
\partial_t u -\Delta u &= |u|^{\kappa-1} u &&\mbox{in}\;\; \Omega, \\
u&=0  && \mbox{on}\;\; \partial\Omega,\\
u&=u_0  && \mbox{in}\;\; \Omega,\
\end{aligned}
\end{equation}
where $\kappa>1$.

\medskip
We consider this example in strong and weak functional analytic settings, to be made precise below. Note that the (local) scaling invariance is given by
$$u_\lambda(t,x)= \lambda^{\frac{1}{\kappa-1}}u(\lambda t,\lambda^{\frac{1}{2}}x).$$
\emph{Strong setting:} Let $X_0=L_q(\Omega)$, $1<q<\infty$,
$$X_1:=\{u\in H_q^2(\Omega):u|_{\partial\Omega}=0\}$$
and define an operator $A:X_1\to X_0$ by $Au:=-\Delta u$. With this choice, it holds that
\begin{equation*}
X_\beta=(X_0,X_1)_{\beta}
=
\left\{
\begin{aligned}
& \{u\in H_q^{2\beta}(\Omega):u|_{\partial\Omega}=0\} &&\text{for} &&  \beta\in (1/2q,1],\\
& H_q^{2\beta}(\Omega) &&\text{for} &&\beta\in [0,1/2q).
\end{aligned}
\right.
\end{equation*}
Define $F(u)=|u|^{\kappa-1}u$ for $u\in X_\beta$. Then 
$$|F(u)|_{X_0} = |u|_{\kappa q}^\kappa\le C|u|_{X_\beta}^\kappa,$$
and by the fundamental theorem of calculus
$$|F(u)-F(\bar{u})|_{X_0}\le C(|u|_{X_\beta}^{\kappa-1}+|\bar{u}|_{X_\beta}^{\kappa-1})|u-\bar{u}|_{X_\beta},$$
provided $H_{q}^{2\beta}(\Omega)\hookrightarrow L_{\kappa q}(\Omega)$, which is the case for 
$$\beta=\frac{d}{2q}\left(1-\frac{1}{\kappa}\right),$$
hence $q>\frac{d}{2}(1-\frac{1}{\kappa})$, by the constraint $\beta<1$. Setting $\rho_1=\kappa-1$, $\beta_1=\beta$ in \textbf{(H2)}, the critical weight $\mu_c$ is given by
$$\mu_c=\frac{1}{p}+\frac{\kappa\beta-1}{\kappa-1}=\frac{1}{p}+\frac{d}{2q}-\frac{1}{\kappa-1},$$
which results from \textbf{(H3)}. The condition $\mu_c>1/p$ is then equivalent to $\beta>1/\kappa$, hence 
\begin{equation}\label{eq:Ex1q}
\frac{d(\kappa-1)}{2\kappa}<q<\frac{d(\kappa-1)}{2}.
\end{equation}
As $q>1$, this means $d(\kappa-1)/2>1$, hence $\kappa>1+2/d$.
Since $\mu_c\le 1$, we obtain the additional relation
$$\frac{2}{p}+\frac{d}{q}\le \frac{2\kappa}{\kappa-1}$$
for $p,q\in (1,\infty)$ satisfying \eqref{eq:Ex1q}. In the sequel, for $s\in (0,1)\backslash\{1/2q\}$, let
\begin{equation*}
{_0}B_{qp}^{2s}(\Omega)=(X_0,X_1)_{s,p}=
\left\{
\begin{aligned}
& \{u\in B_{qp}^{2s}(\Omega):u|_{\partial\Omega}=0\} &&\text{for} && s\in(1/2q,1),\\
& B_{qp}^{2s}(\Omega) &&\text{for} &&s\in (0,1/2q).
\end{aligned}
\right.
\end{equation*}
Then, the critical  space is given by 
$$X_{\gamma,\mu_c}=(X_0,X_1)_{\mu_c-1/p,p}={_0}B_{qp}^{d/q-2/(\kappa-1)}(\Omega).$$
Applying Theorem \ref{main}, we obtain the following result.
\begin{theorem} 
\label{thm:Ex1strong}
Let $\kappa>1+2/d$, $p\in (1,\infty)$ and let $q\in (1,\infty)$ satisfy \eqref{eq:Ex1q} such that $2/p+d/q\le 2\kappa/(\kappa-1)$.

Then, for each {$u_0\in {_0B}^{d/q-2/(\kappa-1)}_{qp}(\Omega)$}, problem \eqref{eq:Ex1}
admits a unique solution
$$ u \in H^1_{p,\mu_c}((0,a);L_q(\Omega))\cap L_{p,\mu_c}((0,a); H_q^2(\Omega)),$$
for some {$a>0$}, with critical weight {$\mu_c = 1/p+ d/2q-1/(\kappa-1)$}. The solution exists on a maximal interval {$(0,t_+(u_0))$} and depends continuously on {$u_0$}.
In addition,
$$ u \in C([0,t_+); {_0B}^{d/q-2/(\kappa-1)}_{qp}(\Omega))\cap C((0,t_+);{_0 B}^{2(1-1/p)}_{qp}(\Omega)),
$$
i.e.\ the solutions regularize instantly if {$2/p +d/q<2\kappa/(\kappa-1)$}.
\end{theorem}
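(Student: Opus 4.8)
The strategy is to verify that problem \eqref{eq:Ex1} in the strong setting fits into the abstract framework of Theorem~\ref{main} (equivalently, since the nonlinearity is a pure semilinear term with a power estimate, into the spirit of Theorem~\ref{thm1}), and then simply read off the conclusions. First I would recall the choices $X_0=L_q(\Omega)$, $X_1=\{u\in H_q^2(\Omega):u|_{\partial\Omega}=0\}$ and $Au=-\Delta u$, and note that $A$ has maximal $L_p$-regularity and, in fact, $-A$ generates an analytic semigroup with $A\in\mathcal{H}^\infty(X_0)$ of angle $0$ (Dirichlet Laplacian on a $C^2$ domain in an $L_q$-space); in particular $A$ is invertible up to a shift, which is harmless, and the structural condition \textbf{(S)} holds via the mixed derivative theorem. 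This puts us exactly in the situation where $F(u)=|u|^{\kappa-1}u$ plays the role of $F_2$ with a single exponent pair $\rho_1=\kappa-1$, $\beta_1=\beta$.

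Next I would justify hypothesis \textbf{(H2)}: the pointwise bound $\bigl||s|^{\kappa-1}s-|t|^{\kappa-1}t\bigr|\le C(|s|^{\kappa-1}+|t|^{\kappa-1})|s-t|$ combined with H\"older's inequality in $L_q$ gives
$$|F(u)-F(\bar u)|_{X_0}\le C\bigl(|u|_{L_{\kappa q}}^{\kappa-1}+|\bar u|_{L_{\kappa q}}^{\kappa-1}\bigr)|u-\bar u|_{L_{\kappa q}},$$
and then the Sobolev embedding $H_q^{2\beta}(\Omega)\hookrightarrow L_{\kappa q}(\Omega)$, valid precisely when $2\beta\ge d(1/q-1/(\kappa q))$, i.e.\ for the borderline choice $\beta=\frac{d}{2q}(1-1/\kappa)$, yields \textbf{(H2)} with $X_\beta=X_{\beta_1}$. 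Hypothesis \textbf{(H1)} is vacuous here since $A$ is constant and there is no $F_1$, so $A\in C^{1-}$ trivially and $F_1\equiv 0$. Then \textbf{(H3)} with $\rho_1=\kappa-1$, $\beta_1=\beta$ forces the critical weight $\mu_c=\frac1p+\frac{\kappa\beta-1}{\kappa-1}=\frac1p+\frac{d}{2q}-\frac{1}{\kappa-1}$; the admissibility constraint $\mu_c>1/p$ together with $\mu_c\le 1$ and $q>1$ translates into the stated restrictions $\kappa>1+2/d$, the window \eqref{eq:Ex1q} for $q$, and the inequality $2/p+d/q\le 2\kappa/(\kappa-1)$. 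Identifying the real interpolation space $(X_0,X_1)_{\mu_c-1/p,p}$ with ${_0}B_{qp}^{d/q-2/(\kappa-1)}(\Omega)$ is exactly the interpolation identity stated just above the theorem (the endpoint $\mu_c-1/p=d/q-2/(\kappa-1)$ lies in $(0,1)$ and, under the stated conditions, one should check it avoids the exceptional value $1/2q$; if it equals $1/2q$ the formula for $X_\beta$ degenerates and a small separate remark is needed, but generically this does not occur).

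With all hypotheses verified, Theorem~\ref{main} (applied at the critical weight $\mu=\mu_c$, using that $A_0=A$ has maximal $L_p$-regularity) delivers, for each $u_0$ in $X_{\gamma,\mu_c}={_0}B_{qp}^{d/q-2/(\kappa-1)}(\Omega)$, a time $a>0$ and a unique solution $u\in H^1_{p,\mu_c}((0,a);L_q(\Omega))\cap L_{p,\mu_c}((0,a);H_q^2(\Omega))$, together with Lipschitz dependence on the initial data, hence continuous dependence. Extending this local solution to a maximal interval $(0,t_+(u_0))$ is the standard continuation argument (cf.\ the discussion after Theorem~\ref{main} and the proof of Corollary~\ref{cor1}). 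The instantaneous regularization statement follows from the embedding $H^1_{p,\mu}((0,a);X_0)\cap L_{p,\mu}((0,a);X_1)\hookrightarrow C((0,a];X_{\gamma,1})$ together with $X_{\gamma,1}=(X_0,X_1)_{1-1/p,p}={_0}B_{qp}^{2(1-1/p)}(\Omega)$, applied on subintervals $(\delta,t_+)$: for $\mu_c<1$, i.e.\ precisely when $2/p+d/q<2\kappa/(\kappa-1)$, the solution lies in $C((0,t_+);{_0}B_{qp}^{2(1-1/p)}(\Omega))$.

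\textbf{Main obstacle.} The only genuinely delicate point is the borderline Sobolev embedding $H_q^{2\beta}(\Omega)\hookrightarrow L_{\kappa q}(\Omega)$ used at the exact exponent $\beta=\frac{d}{2q}(1-1/\kappa)$, and the accompanying bookkeeping that turns the three interval constraints ($\mu_c>1/p$, $\mu_c\le 1$, $q>1$, plus $\beta<1$) into the clean hypotheses of the theorem — this is where the condition $\kappa>1+2/d$, the range \eqref{eq:Ex1q}, and $2/p+d/q\le 2\kappa/(\kappa-1)$ all come from. A secondary technical nuisance is ensuring the interpolation identity for ${_0}B_{qp}^{2s}$ is applied away from the exceptional value $s=1/2q$, where the boundary condition in the interpolation space is lost; under the stated constraints one checks this causes no trouble. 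Everything else is a direct citation of Theorem~\ref{main} and the standard maximal-regularity machinery.
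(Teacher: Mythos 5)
Your proposal is correct and follows essentially the same route as the paper: verify \textbf{(H2)} for $F(u)=|u|^{\kappa-1}u$ via the borderline embedding $H_q^{2\beta}(\Omega)\hookrightarrow L_{\kappa q}(\Omega)$ with $\beta=\tfrac{d}{2q}(1-1/\kappa)$, read off $\mu_c$ from \textbf{(H3)} with $\rho_1=\kappa-1$, $\beta_1=\beta$, identify $X_{\gamma,\mu_c}={_0}B_{qp}^{d/q-2/(\kappa-1)}(\Omega)$, and invoke Theorem~\ref{main} together with the standard continuation and regularization arguments. Your added remark about avoiding the exceptional interpolation value $1/2q$ is a harmless refinement the paper leaves implicit.
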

\emph{Weak setting:}
Here, we employ the theory of interpolation-extrapolation scales, see the Appendix. Let $A_0:=A$ with domain $X_1$ and $(X_\alpha,A_\alpha)$, $\alpha\in\R$, the interpolation-extrapolation scale $(X_\alpha,A_\alpha)$, $\alpha\in\R$, generated by $(X_0,A_0)$ with respect to the complex interpolation functor.  Let further $1/q+1/q'=1$, $X_0^\sharp=L_{q'}(\Omega)$, $A_0^\sharp=-\Delta|_{X_0^\sharp}$ with domain 
$$X_1^\sharp=\{u\in H_{q'}^2(\Omega):u|_{\partial\Omega}=0\}.$$ 
Then $(X_0^\sharp,A_0^\sharp)$ generates the dual scale $(X_\alpha^\sharp,A_\alpha^\sharp)$. In particular, it holds
$$X_{1/2}=(X_0,X_1)_{1/2}=\{u\in H_q^1(\Omega):u|_{\partial\Omega}=0\},$$
$$X_{1/2}^\sharp=(X_0^\sharp,X_1^\sharp)_{1/2}=\{u\in H_{q'}^1(\Omega):u|_{\partial\Omega}=0\}$$
and
$X_{-1/2}=(X_{1/2}^\sharp)'$, see the Appendix. Moreover, the operator $A_{-1/2}:X_{1/2}\to X_{-1/2}$ is given by
$$\langle A_{-1/2}u,\phi\rangle=\int_\Omega\nabla u\cdot \nabla \phi\ dx$$
for all $(u,\phi)\in X_{1/2}\times X_{1/2}^\sharp$, which follows from integration by parts and the density of $X_1$ in $X_{1/2}$.

So in the week setting, we choose $X_0^{\sf w}=X_{-1/2}$, $X_1^{\sf w}=X_{1/2}$ and $A^{\sf w}=A_{-1/2}$ with domain $X_{1/2}$ to rewrite \eqref{eq:Ex1} as the semilinear evolution equation
\begin{equation}\label{eq:Ex1weak}
\partial_tu+A^{\sf w} u=F^{\sf w}(u),
\end{equation}
where $\langle F^{\sf w}(u)|\phi\rangle:=(F(u)|\phi)_{L_2(\Omega)}$. In the sequel let
\begin{equation*}
{_0 H}_q^{r}(\Omega):=
\left\{
\begin{aligned}
&\{u\in H_q^{r}(\Omega):u|_{\partial\Omega}=0\} && \text{for} && r\in (1/q,1],\\
&H_q^{r}(\Omega) && \text{for} && r\in [0,1/q),\\
\end{aligned}
\right.
\end{equation*}
and ${_0}H_q^{-r}(\Omega):=({_0}H_{q'}^{r}(\Omega))'$ if $r\in [0,1]\backslash\{1-1/q\}$. It follows readily from the reiteration property \eqref{eq:ReitProp} that 
$$X_\beta^{\sf w}=(X_0^{\sf w},X_1^{\sf w})_{\beta}={_0}H_q^{2\beta-1}(\Omega).$$
For $u\in X_\beta^{\sf w}$ and $\phi\in{_0}H_{q'}^1(\Omega)$, by H\"older's inequality, we therefore obtain
$$|\langle F(u)|\phi \rangle | \le |u|_{L_{r\kappa}(\Omega)}^\kappa|\phi|_{L_{r'}(\Omega)},$$
where $1/r+1/r'=1$. We choose $q,r,q'$ and $r'$ such that
$$H_{q}^{2\beta-1}(\Omega)\hookrightarrow L_{r\kappa}(\Omega)\quad\text{and}\quad H_{q'}^1(\Omega)\hookrightarrow L_{r'}(\Omega),$$
to be precise
$$2\beta-1-\frac{d}{q}= -\frac{d}{r\kappa}\quad\text{and}\quad 1-\frac{d}{q'}= -\frac{d}{r'}.$$
The last equality is equivalent to $1+d/q= d/r$. This yields
$$-\kappa\left(2\beta-1-\frac{d}{q}\right)= 1+\frac{d}{q}$$
or equivalently
$$\beta=\frac{1}{2}\left(1+d/q\right)\left(1-1/\kappa\right).$$
Note that the condition $\frac{1}{d}+\frac{1}{q}=\frac{1}{r}<1$ is equivalent to $q>\frac{d}{d-1}$.

Furthermore, the constraint $\beta<1$ leads to $q>d(\kappa-1)/(\kappa+1)$.
Under these assumptions we obtain the estimates
$$|F^{\sf w}(u)|_{X_0^{\sf w}}\le C|u|_{X_\beta^{\sf w}}^\kappa,$$
and
$$|F^{\sf w}(u)-F^{\sf w}(\bar{u})|_{X_0^{\sf w}}\le C(|u|_{X_\beta^{\sf w}}^{\kappa-1}+|\bar{u}|_{X_\beta^{\sf w}}^{\kappa-1})|u-\bar{u}|_{X_\beta^{\sf w}}.$$
From \textbf{(H3)} with $\rho_1=\kappa-1$ and $\beta_1=\beta$, we obtain the critical weight
$$\mu^{\sf w}_c=\frac{1}{p}+\frac{1}{2}\left(1+\frac{d}{q}\right)-\frac{1}{\kappa-1}=\frac{1}{p}+\frac{d}{2q}+\frac{\kappa-3}{2(\kappa-1)}.$$
which in turn yields the restriction
$$\frac{2}{p}+\frac{d}{q}\le \frac{\kappa+1}{(\kappa-1)}$$
as $\mu_c\le 1$. Note that the requirement $\mu_c>1/p$ leads to 
$$\frac{3-\kappa}{\kappa-1}<\frac{d}{q}$$
which is always satisfied provided $\kappa\ge 3$. We assume this in the sequel.

For $s\in (0,1)\backslash\{1/q\}$, we define the spaces
\begin{equation*}
{_0}B_{qp}^s(\Omega):=(X_0,X_{1/2})_{s,p}=
\left\{
\begin{aligned}
& \{u\in B_{qp}^{s}(\Omega):u|_{\partial\Omega}=0\} && \text{for} && s\in (1/q,1),\\
& B_{qp}^{s}(\Omega) && \text{for} && s\in (0,1/q)
\end{aligned}
\right.
\end{equation*}
and set ${_0}B_{qp}^{-s}(\Omega):=({_0}B_{q'p'}^s(\Omega))'$ for $s\in (0,1)\backslash\{1-1/q\}$. It follows from \eqref{re-int} that
$$(X_0^{\sf w},X_1^{\sf w})_{s,p}=(X_{-1/2},X_{1/2})_{s,p}={_0}B_{qp}^{2s-1}(\Omega),$$
for all $s\in (0,1)$, where, by reiteration 
$${_0}B_{qp}^0(\Omega):=(X_{-1/2},X_{1/2})_{1/2,p}
=(X_{-\alpha},X_{\alpha})_{1/2,p}=B_{qp}^0(\Omega)$$
for any $\alpha\in (0,1/2q)$. Therefore, the critical space in the weak setting is given by
$$X_{\gamma,\mu_c}^{\sf w}=(X_0^{\sf w},X_1^{\sf w})_{\mu^{\sf w}_c-1/p,p}={_0}B_{qp}^{d/q-2/(\kappa-1)}(\Omega).$$
Then we have the following result.
\begin{theorem} \label{thm:Ex1weak}
Let $\kappa\ge3$, $p\in (1,\infty)$ and let $q\in \left(d\cdot\max\{\frac{1}{d-1},\frac{\kappa-1}{\kappa+1}\},\infty\right)$ 
satisfy $2/p+d/q\le (\kappa+1)/(\kappa-1)$.

Then, for each {$u_0\in {_0B}^{d/q-2/(\kappa-1)}_{qp}(\Omega)$}, equation \eqref{eq:Ex1weak}
admits a unique solution
$$ u \in H^1_{p,\mu_c}((0,a);{_0}H_q^{-1}(\Omega))\cap L_{p,\mu_c}((0,a); {_0}H_q^1(\Omega)),$$
for some {$a>0$}, with critical weight 
$$\mu^{\sf w}_c = \frac{1}{p}+\frac{1}{2}\left(1+\frac{d}{q}\right)-\frac{1}{\kappa-1}.$$ 
The solution exists on a maximal interval {$(0,t_+(u_0))$} and depends continuously on {$u_0$}.
In addition,
$$ u \in C([0,t_+); {_0B}^{d/q-1/(\kappa-1)}_{qp}(\Omega))\cap C((0,t_+);{_0 B}^{1-2/p}_{qp}(\Omega)),
$$
i.e.\ the solutions regularize instantly if {$2/p +d/q<(\kappa+1)/(\kappa-1)$}.
\end{theorem}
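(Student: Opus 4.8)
The plan is to verify that equation \eqref{eq:Ex1weak} fits into the framework of Theorem~\ref{main} (equivalently Theorem~\ref{thm1} with $G(u,v)$ a suitable symmetric multilinear map, but here $F^{\sf w}$ is a power nonlinearity, so Theorem~\ref{main} via hypotheses \textbf{(H1)}--\textbf{(H3)} is the right vehicle). First I would record that the operator $A^{\sf w}=A_{-1/2}$ has maximal $L_p$-regularity: since $A_0=-\Delta$ with Dirichlet conditions is invertible and belongs to $\mathcal{BIP}(L_q(\Omega))$ with power angle $<\pi/2$, the same holds for every operator $A_\alpha$ in the interpolation-extrapolation scale, in particular for $A_{-1/2}$ on $X_0^{\sf w}=X_{-1/2}={_0}H_q^{-1}(\Omega)$ with domain $X_1^{\sf w}=X_{1/2}={_0}H_q^1(\Omega)$; this is the standard extrapolation result recalled in the Appendix. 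Condition \textbf{(S)} then follows from the Remark after Theorem~\ref{main} via the mixed derivative theorem, because $X_0^{\sf w}$ is UMD (a closed subspace/dual of an $L_q$-space) and $A^{\sf w}+\omega\in\mathcal{H}^\infty$ with angle $<\pi/2$.

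Next I would check \textbf{(H1)}: here $A(u)\equiv A^{\sf w}$ is constant (in particular $C^{1-}$), and $F_1\equiv 0$, so \textbf{(H1)} is trivial. The substantial nonlinearity goes into $F_2=F^{\sf w}$, for which I would reproduce the two displayed estimates just before the theorem, namely $|F^{\sf w}(u)|_{X_0^{\sf w}}\le C|u|_{X_\beta^{\sf w}}^\kappa$ and the Lipschitz bound $|F^{\sf w}(u)-F^{\sf w}(\bar u)|_{X_0^{\sf w}}\le C(|u|_{X_\beta^{\sf w}}^{\kappa-1}+|\bar u|_{X_\beta^{\sf w}}^{\kappa-1})|u-\bar u|_{X_\beta^{\sf w}}$, which hold precisely because of the two Sobolev embeddings $H_q^{2\beta-1}(\Omega)\hookrightarrow L_{r\kappa}(\Omega)$ and $H_{q'}^1(\Omega)\hookrightarrow L_{r'}(\Omega)$ arranged by the choice $\beta=\tfrac12(1+d/q)(1-1/\kappa)$ together with $1+d/q=d/r$. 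These give \textbf{(H2)} with $m=1$, $\rho_1=\kappa-1$, $\beta_1=\beta$ (and $X_{\beta_1}=X_\beta^{\sf w}$, so the borderline $\beta_1=\mu-1/p$ clause is not needed), and the constraints $q>d\max\{\tfrac1{d-1},\tfrac{\kappa-1}{\kappa+1}\}$ are exactly what makes $1/r<1$ and $\beta<1$. Condition \textbf{(H3)} with these data reduces, as computed in the text, to $\mu\ge\mu_c^{\sf w}=\tfrac1p+\tfrac12(1+d/q)-\tfrac1{\kappa-1}$, and the hypotheses $\mu_c^{\sf w}>1/p$ (equivalent to $\tfrac{3-\kappa}{\kappa-1}<\tfrac dq$, automatic for $\kappa\ge3$) and $\mu_c^{\sf w}\le1$ (equivalent to $2/p+d/q\le(\kappa+1)/(\kappa-1)$) are precisely the standing assumptions of the theorem.

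With all hypotheses verified, Theorem~\ref{main} applied with $u_0\in X_{\gamma,\mu_c}^{\sf w}=(X_0^{\sf w},X_1^{\sf w})_{\mu_c^{\sf w}-1/p,p}$ yields a unique local solution in $H^1_{p,\mu_c}((0,a);{_0}H_q^{-1}(\Omega))\cap L_{p,\mu_c}((0,a);{_0}H_q^1(\Omega))$, together with continuous dependence on $u_0$; a standard continuation argument then produces the maximal interval $(0,t_+(u_0))$. The identification of the critical space as ${_0}B_{qp}^{d/q-2/(\kappa-1)}(\Omega)$ is the interpolation computation already carried out before the theorem using \eqref{re-int} and reiteration. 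For the embedding statements: the trace embedding $\mathbb{E}_{1,\mu_c}(0,a)\hookrightarrow C([0,a];X_{\gamma,\mu_c}^{\sf w})=C([0,a];{_0}B_{qp}^{d/q-2/(\kappa-1)}(\Omega))$ is built into the solution class; for positive times, the instantaneous-regularization part of Theorem~\ref{main} gives $u\in C((0,t_+);X_{\gamma,1}^{\sf w})$ whenever $\mu_c^{\sf w}<1$, and $X_{\gamma,1}^{\sf w}=(X_0^{\sf w},X_1^{\sf w})_{1-1/p,p}={_0}B_{qp}^{1-2/p}(\Omega)$ by the same reiteration identity, so the condition $2/p+d/q<(\kappa+1)/(\kappa-1)$ (strict $\mu_c^{\sf w}<1$) is exactly what yields instant regularization. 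The one intermediate regularity space ${_0}B_{qp}^{d/q-1/(\kappa-1)}(\Omega)$ appearing in the first slot can be read off from the embedding of $\mathbb{E}_{1,\mu_c}$ into $C([0,a];(X_0^{\sf w},X_1^{\sf w})_{\theta,p})$ for the appropriate $\theta$ (or from a short-time/long-time interpolation argument as in Corollary~\ref{cor2}).

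The main obstacle I anticipate is not any single estimate but the bookkeeping of the function-space identifications in the extrapolated scale: one must be careful that $X_0^{\sf w}=X_{-1/2}$ really equals ${_0}H_q^{-1}(\Omega)=({_0}H_{q'}^1(\Omega))'$ with the duality pairing that makes $\langle F^{\sf w}(u),\phi\rangle=(F(u),\phi)_{L_2}$ well defined for $u\in X_\beta^{\sf w}$, $\phi\in X_{1/2}^{\sf w,\sharp}$ — i.e.\ that the weak formulation is consistent — and that $A_{-1/2}$ inherits $\mathcal{BIP}$/$\mathcal{H}^\infty$ with angle $<\pi/2$ and hence maximal regularity. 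All of this is handled by the interpolation-extrapolation machinery recalled in the Appendix, but it is the step where sign errors or index slips are most likely, so I would state the scale facts carefully before invoking Theorem~\ref{main}.
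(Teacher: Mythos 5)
Your proposal is correct and takes essentially the same route as the paper: the paper's proof of Theorem~\ref{thm:Ex1weak} consists precisely of the preceding computations (the extrapolated weak setting $X_0^{\sf w}=X_{-1/2}$, the choice $\beta=\tfrac12(1+d/q)(1-1/\kappa)$ from the two Sobolev embeddings, verification of {\bf (H2)}/{\bf (H3)} with $\rho_1=\kappa-1$, $\beta_1=\beta$, and the identification of $X^{\sf w}_{\gamma,\mu_c}$ via \eqref{re-int}) followed by an application of Theorem~\ref{main}, exactly as you outline. One small remark: the trace embedding gives continuity with values in $X^{\sf w}_{\gamma,\mu_c}={_0}B^{d/q-2/(\kappa-1)}_{qp}(\Omega)$, so the exponent $d/q-1/(\kappa-1)$ in the first slot of the stated theorem is evidently a misprint, and your attempt to produce that space by a separate interpolation argument is unnecessary.
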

\noindent
Equation \eqref{eq:Ex1} has been considered by many authors in the last four decades, 
see for instance \cite{CDW09, Fuj66, QuSo07,Wei81, Wei86}. 
To the best of our knowledge, the results in Theorems~\ref{thm:Ex1strong} and  \ref{thm:Ex1weak}  are new.

\medskip
\noindent
{\bf Example 2.} Let $\Omega$ be a bounded domain of class $C^{2}$ in $\RR^d$, and consider the Neumann problem
\begin{equation}\label{eq:ex2}
\begin{aligned}
\partial_t u -{\rm div}(a(u)\nabla u) &= |\nabla u|^\kappa && \mbox{in}\;\; \Omega, \\
\partial_\nu u&=0  && \mbox{on}\;\; \partial\Omega,\\
u&=u_0  && \mbox{in} \;\;\Omega, 
\end{aligned}
\end{equation}
where $\kappa>2$. Observe that this problem is a scaling invariant with respect to
$$u_\lambda(t,x)=\lambda^{-\frac{\kappa-2}{2(\kappa-1)}}u(\lambda t,\lambda^{\frac{1}{2}}x)$$
if e.g.~$a(u)\equiv a_0=const$. In the sequel, we assume that $a\in C^{1}(\R)$, $a(s)>0$ for all $s\in\R$, and
there exists $C>0$ such that
\begin{equation}\label{eq:a_cond}
|a'(s_1)-a'(s_2)|\le C|s_1-s_2|,\quad s_1,s_2\in\R.
\end{equation}
For sufficiently smooth $u$, this yields 
$${\rm div}(a(u)\nabla u)=a(u)\Delta u+a'(u)|\nabla u|^2,$$
hence we may rewrite $\eqref{eq:ex2}_1$ as
$$\partial_t u -a(u)\Delta u = |\nabla u|^\kappa+a'(u)|\nabla u|^2\quad \mbox{in } \Omega.$$
Let $X_0=L_q(\Omega)$, $1<q<\infty$, and
$$X_1=\{u\in H_q^2(\Omega):\partial_\nu u=0\ \text{on}\ \partial\Omega\}.$$
This in turn implies
\begin{equation*}
X_\beta=(X_0,X_1)_{\beta}=
\left\{
\begin{aligned}
& \{u\in H_q^{2\beta}(\Omega):\partial_\nu u=0\} &&\text{for} && \beta\in (1/2+1/2q,1],\\
& H_q^{2\beta}(\Omega) && \text{for} &&\beta\in [0,1/2+1/2q).
\end{aligned}
\right.
\end{equation*}
Define $F_\kappa,F_a:X_\beta\to X_0$ by
$$F_\kappa(u)=|\nabla u|^\kappa\quad\text{and}\quad F_a(u)=a'(u)|\nabla u|^2.$$
Note that if $H^{2\beta}_q(\Omega\hookrightarrow H_{q\kappa}^1(\Omega)$, i.e.
$$\beta=\frac{1}{2}+\frac{d}{2q}\left(1-\frac{1}{\kappa}\right),$$
the nonlinearities $F_\kappa$ and $F_a$ are well-defined, since $a'\in C(\R)$, $\kappa>2$ and $H_q^{2\beta}\hookrightarrow C(\overline{\Omega})$ for any $q\in (1,\infty)$. For $u_1,u_2\in X_\beta$, we obtain as in Example 1
$$|F_\kappa(u_1)-F_\kappa(u_2)|_{X_0}\le C(|u_1|_{X_\beta}^{\kappa-1}+|u_2|_{X_\beta}^{\kappa-1})|u_1-u_2|_{X_\beta}.$$
For this nonlinearity, the smallest possible value for $\mu$ may be computed from \textbf{(H3)} with $\rho_1=\kappa-1$ and $\beta_1=\beta$ to the result
$$\mu_c=\frac{1}{p}+\frac{\kappa\beta-1}{\kappa-1}=\frac{1}{p}+\frac{d}{2q}+\frac{\kappa-2}{2(\kappa-1)}.$$
Setting
\begin{equation*}
{_\nu}B_{qp}^{2s}(\Omega):=(X_0,X_1)_{s,p}=
\left\{
\begin{aligned}
& \{u\in B_{qp}^{2s}(\Omega):\partial_\nu u=0\} &&\text{for} &&s\in(1/2+1/2q,1),\\
& B_{qp}^{2s}(\Omega) && \text{for} && s\in (0,1/2+1/2q),
\end{aligned}
\right.
\end{equation*}
the critical space is
$$X_{\gamma,\mu_c}=(X_0,X_1)_{\mu_c-1/p,p}={_\nu B}_{qp}^{d/q+(\kappa-2)/(\kappa-1)}(\Omega),$$ 
which is embedded into $C(\overline{\Omega})$ for all $p,q\in (1,\infty)$, $\kappa>2$.

Concerning $F_a$, we write
$$F_a(u_1)-F_a(u_2)=a'(u_1)(|\nabla u_1|^2-|\nabla u_2|^2)+|\nabla u_2|^2(a'(u_1)-a'(u_2)),$$
for $u_1,u_2\in X_\beta$. It follows from H\"older's inequality and \eqref{eq:a_cond}
\begin{align*}
|a'(u_1)(|\nabla u_1|^2-|\nabla u_2|^2)|_{X_0}&\le C(1+|u_1|_{X_{\gamma,\mu_c}})(|\nabla u_1|_{L_{2q}}+|\nabla u_2|_{L_{2q}})|\nabla u_1-\nabla u_2|_{L_{2q}}\\
&\le C_\kappa(|u_1|_{X_{\gamma,\mu_c}})(1+| u_1|_{X_\beta}^{\kappa-1}+| u_2|_{X_\beta}^{\kappa-1})|u_1- u_2|_{X_\beta},
\end{align*}
since $\kappa-1>1$, where we also applied Young's inequality. So for this part we may set as before $\rho_2=\kappa-1$ and $\beta_2=\beta$. For the remaining part, we obtain
$$|\nabla u_2|^2(a'(u_1)-a'(u_2))|_{X_0}\le C|u_1-u_2|_{L_\infty}|u_2|_{H_{2q}^1}^2\le C|u_2|_{X_\beta}^2|u_1-u_2|_{X_{\gamma,\mu_c}}.$$
A straightforward calculation shows that there is strict inequality in \textbf{(H3)} with $\rho_3=2$ and $\beta_3=\mu_c-1/p$ if and only if $\beta<1$. 

In other words, this part of the nonlinearity $F_a$ is always subcritical and so $\mu_c$ defined above is the critical weight for the nonlinearity $F(u):=F_\kappa(u)+F_a(u)$. Note that the condition $\mu_c>1/p$ is always satisfied as $\kappa>2$. The restrictions
$\beta<1$ and  $\mu_c\le 1$ then lead to
\begin{equation*}
\begin{aligned}
& \frac{d}{q} < \frac{\kappa}{(\kappa-1)}\quad\text{and}\quad
&\frac{1}{p}+\frac{d}{2q}+\frac{\kappa-2}{2(\kappa-1)}\le 1\Leftrightarrow 
\frac{2}{p}+\frac{d}{q}\le \frac{\kappa}{(\kappa-1)},
\end{aligned}
\end{equation*}
respectively.

For $v\in X_{\gamma,\mu_c}$ we define an operator $A(v):X_1\to X_0$ by
$[A(v)u](x):=a(v(x))\Delta u(x)$, $x\in\Omega$. By compactness, there exists $a_0>0$ such that $a(v(x))\ge a_0>0$ for all $x\in\Omega$, since $x\mapsto a(v(x))$ is continuous. Furthermore,  it follows from \cite{DDHPV04} that $A_\#:=A(v)\in\mathcal{H}^\infty(X_0)$ with $\mathcal{H}^\infty$-angle $\phi_{A_\#}^\infty<\pi/2$. 

\begin{theorem} \label{thm:Ex2}
Let $a\in C^1(\R) $, $a(s)>0$ for all $s\in\R$ and assume \eqref{eq:a_cond}. Suppose that $\kappa>2$, $p\in (1,\infty)$, and 
$2/p+d/q\le \kappa/(\kappa-1)$.

Then, for each $u_0\in {_\nu B}^{d/q+(\kappa-2)/(\kappa-1)}_{qp}(\Omega)$, problem \eqref{eq:ex2}
admits a unique solution
$$ u \in H^1_{p,\mu_c}((0,a);L_q(\Omega))\cap L_{p,\mu_c}((0,a); H_q^2(\Omega)),$$
for some $a>0$, with critical weight $\mu_c = 1/p+ d/2q-(\kappa-2)/2(\kappa-1)$. 
The solution exists on a maximal interval $(0,t_+(u_0))$ and depends continuously on $u_0$.
In addition,
$$ u \in C([0,t_+); {_\nu B}^{d/q+(\kappa-2)/(\kappa-1)}_{qp}(\Omega))\cap C((0,t_+);{_\nu B}^{2(1-1/p)}_{qp}(\Omega)),
$$
i.e.\ the solutions regularize instantly if $2/p +d/q<\kappa/(\kappa-1)$.
\end{theorem}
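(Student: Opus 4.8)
The plan is to cast problem \eqref{eq:ex2} in the quasilinear form \eqref{qpp} and verify the hypotheses \textbf{(H1)}, \textbf{(H2)}, \textbf{(H3)} together with Condition \textbf{(S)}, so that Theorem \ref{main} applies. First I would rewrite the divergence-form operator in nondivergence form using $a\in C^1(\R)$, so that $\eqref{eq:ex2}_1$ becomes $\partial_t u + A(u)u = F_\kappa(u)+F_a(u)$ with $[A(v)u] = -a(v)\Delta u$ (plus lower-order terms absorbed into $F_a$), $F_\kappa(u)=|\nabla u|^\kappa$, $F_a(u)=a'(u)|\nabla u|^2$. I would take $X_0=L_q(\Omega)$, $X_1=\{u\in H^2_q(\Omega):\partial_\nu u=0\}$, and the interpolation spaces $X_\beta$ as displayed; the boundary condition $\partial_\nu u=0$ is incorporated in $X_1$, which is consistent with the Neumann problem.

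Next I would verify the structural hypotheses. For \textbf{(H1)}: the map $v\mapsto A(v)=-a(v)\Delta$ is $C^{1-}$ from $V_{\mu_c}\subset X_{\gamma,\mu_c}\hookrightarrow C(\overline\Omega)$ into $\cB(X_1,X_0)$ because $a\in C^1$ with locally Lipschitz derivative by \eqref{eq:a_cond}; note $X_{\gamma,\mu_c}={_\nu B}^{d/q+(\kappa-2)/(\kappa-1)}_{qp}(\Omega)\hookrightarrow C(\overline\Omega)$ since the smoothness index exceeds $d/q$. The embedding into $C(\overline\Omega)$ is the point that makes the multiplication operator well-behaved. For \textbf{(H2)}, \textbf{(H3)}: I would use exactly the estimates already carried out in the text, namely $|F_\kappa(u_1)-F_\kappa(u_2)|_{X_0}\le C(|u_1|_{X_\beta}^{\kappa-1}+|u_2|_{X_\beta}^{\kappa-1})|u_1-u_2|_{X_\beta}$ with $\rho_1=\kappa-1$, $\beta_1=\beta$, and the split of $F_a(u_1)-F_a(u_2)$ into the term with $\rho_2=\kappa-1$, $\beta_2=\beta$ and the term with $\rho_3=2$, $\beta_3=\mu_c-1/p$ (using $X_{\gamma,\mu_c}$ in place of $X_{\beta_3}$, which is admissible per \textbf{(H2)}). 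The text establishes that the $F_a$-part with $\beta_3=\mu_c-1/p$ is strictly subcritical iff $\beta<1$, while the remaining parts satisfy \textbf{(H3)} with equality precisely at $\mu=\mu_c=\tfrac1p+\tfrac{\kappa\beta-1}{\kappa-1}=\tfrac1p+\tfrac{d}{2q}+\tfrac{\kappa-2}{2(\kappa-1)}$, under the constraints $\beta<1$ (i.e.\ $d/q<\kappa/(\kappa-1)$) and $\mu_c\le1$ (i.e.\ $2/p+d/q\le\kappa/(\kappa-1)$). Condition \textbf{(S)} follows from the Remark after \eqref{qpp}: by \cite{DDHPV04} the frozen-coefficient operator $A_\#=A(v)=-a(v)\Delta+\partial_\nu$ admits a bounded $\cH^\infty$-calculus on $L_q(\Omega)$ with angle $<\pi/2$, so the mixed derivative theorem gives \textbf{(S)}.

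Then I would invoke Theorem \ref{main}: fixing $u_0\in X_{\gamma,\mu_c}$, the operator $A_0=A(u_0)$ has maximal $L_p$-regularity (again by \cite{DDHPV04} plus $\cH^\infty$-calculus), so there is $a>0$ and a neighborhood of $u_0$ in $V_{\mu_c}$ on which \eqref{eq:ex2} has a unique solution in $H^1_{p,\mu_c}((0,a);X_0)\cap L_{p,\mu_c}((0,a);X_1)$. Uniqueness, the maximal interval $(0,t_+(u_0))$, and continuous dependence on $u_0$ come from the standard machinery of \cite{KPW10, PSZ09, PrSi16}, available since we may choose $p$ large enough that $X_{\gamma,1}\hookrightarrow X_\beta\hookrightarrow X_{\gamma,\mu_c}$. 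For the instantaneous regularization statement, I would apply the parabolic smoothing from Theorem \ref{main}/Theorem \ref{thm1}: on any compact subinterval $[\epsilon,a]\subset(0,t_+)$ the solution lies in $H^1_{p,loc}\cap L_{p,loc}$, hence $u\in C((0,t_+);X_{\gamma,1})=C((0,t_+);{_\nu B}^{2(1-1/p)}_{qp}(\Omega))$, and the embedding $\EE_{1,\mu_c}\hookrightarrow C([0,a];X_{\gamma,\mu_c})$ gives continuity up to $t=0$ into the critical space; the regularization is genuine precisely when $\mu_c<1$, i.e.\ $2/p+d/q<\kappa/(\kappa-1)$.

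The main obstacle I anticipate is the treatment of the lower-order term $F_a(u)=a'(u)|\nabla u|^2$, specifically checking that its ``bad'' piece $|\nabla u_2|^2(a'(u_1)-a'(u_2))$ fits \textbf{(H2)} with $\beta_3=\mu_c-1/p$ and is strictly subcritical: this requires the Lipschitz bound \eqref{eq:a_cond} to control $a'(u_1)-a'(u_2)$ by $|u_1-u_2|_{L_\infty}\lesssim|u_1-u_2|_{X_{\gamma,\mu_c}}$, and then a careful bookkeeping of Sobolev exponents to confirm strict inequality in \textbf{(H3)} when $\rho_3=2$ and $\beta_3=\mu_c-1/p$, which holds iff $\beta<1$. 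A secondary technical point is confirming that the quasilinear coefficient $a(v)$, for $v$ only in the low-regularity space $X_{\gamma,\mu_c}$, still yields an operator with maximal $L_p$-regularity and with the $\cH^\infty$-property needed for \textbf{(S)} — this is exactly where the embedding $X_{\gamma,\mu_c}\hookrightarrow C(\overline\Omega)$ (valid since the Besov smoothness $d/q+(\kappa-2)/(\kappa-1)>d/q$) is indispensable and where one cites \cite{DDHPV04}.
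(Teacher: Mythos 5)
Your proposal is correct and follows essentially the same route as the paper: rewrite the divergence-form equation in nondivergence form with $A(v)=-a(v)\Delta$ plus $F_\kappa$ and $F_a$, use the same splitting of $F_a$ with $(\rho_1,\beta_1)=(\kappa-1,\beta)$, $(\rho_2,\beta_2)=(\kappa-1,\beta)$, $(\rho_3,\beta_3)=(2,\mu_c-1/p)$, verify \textbf{(H1)}--\textbf{(H3)} and \textbf{(S)} via the embedding $X_{\gamma,\mu_c}\hookrightarrow C(\overline\Omega)$ and the $\cH^\infty$-calculus result of \cite{DDHPV04}, and then apply Theorem \ref{main}. The only cosmetic difference is notational (e.g.\ writing the Neumann condition as part of the operator symbol), and your value $\mu_c=1/p+d/2q+(\kappa-2)/(2(\kappa-1))$ agrees with the paper's derivation.
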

\noindent
{The results contained in Theorem~\ref{thm:Ex2} seem to be new.
We refer to the monograph \cite{QuSo07} for additional results concerning equation \eqref{eq:ex2}.}\\

\medskip

\noindent
{\bf Example 3.} 
Let $\Omega\subset\R^d$ be a bounded domain with boundary $\partial\Omega\in C^{4-}$. 
Consider the \emph{Cahn-Hilliard equation}
\begin{equation}
\label{eq:CH}
\begin{aligned}
\partial_t u-\Delta v&=0 && \text{in} \;\;\Omega,\\
v+\Delta u-\Phi'(u)&=0 && \text{in}\;\; \Omega,\\
\partial_\nu u=\partial_\nu v&=0 &&\text{on}\;\; \partial\Omega,\\
u(0)&=u_0 &&\text{in}\;\;\Omega.
\end{aligned}
\end{equation}
Here $u$ is an order parameter, $v$ is the chemical potential and $\Phi$ denotes the physical potential, which is often assumed to be of double-well type, i.e.~ $\Phi(s)=(s^2-1)^2$ for $s\in\R$. Note that in view of the homogeneous Neumann boundary conditions, the elliptic-parabolic problem \eqref{eq:CH} is equivalent to the purely parabolic problem
\begin{equation}
\label{eq:CH2}
\begin{aligned}
\partial_t u+\Delta^2 u-\Delta\Phi'(u)&=0 &&\text{in}\;\; \Omega,\\
\partial_\nu u=\partial_\nu \Delta u&=0 &&\text{on}\;\; \partial\Omega,\\
u(0)&=u_0 && \text{in}\;\;\Omega.
\end{aligned}
\end{equation}
Let $X_0:=L_q(\Omega)$, $1<q<\infty$,
$$X_1:=\{u\in H_q^4(\Omega):\partial_\nu u=\partial_\nu\Delta u=0\}$$
and define an operator $A_0:X_1\to X_0$ by $A_0u:=\Delta^2 u$. 
By \cite{DDHPV04}, $A_0\in\mathcal{H}^\infty(X_0)$ with $\mathcal{H}^\infty$-angle $\phi_{A_0}^\infty<\pi/2$. Let $(X_\alpha,A_\alpha)$, $\alpha\in\R$, denote the interpolation-extrapolation scale with respect to the complex interpolation functor (see the Appendix). Then $X_{-1/2}=(X_{1/2}^\sharp)'$ and
$$X_{1/2}=\{u\in H_{q}^2(\Omega):\partial_\nu u=0\},$$ 
as well as
$$X_{1/2}^\sharp=\{u\in H_{q'}^2(\Omega):\partial_\nu u=0\}.$$ 
The operator $A_{-1/2}:X_{1/2}\to X_{-1/2}$ may be represented as
$$\langle A_{-1/2}u,v\rangle=\int_\Omega \Delta u\,\Delta v\ dx$$
for all $(u,v)\in X_{1/2}\times X_{1/2}^\sharp$, which follows from integration by parts and density of $X_1$ in $X_{1/2}$. We consider the weak formulation 
\begin{equation}
\label{eq:CHweak}
\partial_tu+A^{\sf w} u=F^{\sf w}(u),\ t>0,\quad u(0)=u_0,
\end{equation}
of \eqref{eq:CH} in $X_0^{\sf w}:=X_{-1/2}$, where $A^{\sf w}=A_{-1/2}$ with domain $X_1^{\sf w}:=X_{1/2}$ and 
$$\langle F^{\sf w}(u),v\rangle:=(\Phi'(u)|\Delta v)_{L_2(\Omega)}.$$ 
In the sequel, we assume $\Phi\in C^1(\R)$ and there are constants $C>0$ and $\kappa>0$ such that 
\begin{equation}
\label{eq:condPhi}
|\Phi'(s)-\Phi'(\bar{s})|\le C(1+|s|^\kappa+|\bar{s}|^\kappa)|s-\bar{s}|,\quad s,\bar s\in\R.
\end{equation}
H\"older's inequality in combination with \eqref{eq:condPhi} yields
$$|F^{\sf w}(u)|_{X_{0}^{\sf w}}\le |\Phi'(u)|_{L_q}\le C(1+|u|_{L_{(\kappa+1)q}}^{\kappa+1}),$$
for all $u\in X_\beta^{\sf w}=(X_0^{\sf w},X_1^{\sf w})_\beta={_\nu H}_q^{4\beta-2}(\Omega)$, $\beta\in (0,1)$ (see the Appendix for details). 
Here 
\begin{equation*}
{_\nu H}_q^{r}(\Omega):=
\left\{
\begin{aligned}
& \{u\in H_q^{r}(\Omega):\partial_\nu u=0\} && \text{for} && r\in (1+1/q,2],\\
& H_q^{r}(\Omega) &&\text{for} && r\in [0,1+1/q),\\
\end{aligned}
\right.
\end{equation*}

and ${_\nu}H_q^{-r}(\Omega):=({_\nu}H_{q'}^{r}(\Omega))'$ if $r\in [0,2]\backslash\{2-1/q\}$.

Thus, $F^{\sf w}:X_\beta^{\sf w}\to X_0^{\sf w}$ is well-defined, provided $H_q^{4\beta-2}\hookrightarrow L_{(\kappa+1)q}$, hence
$$4\beta-2-\frac{d}{q}=-\frac{d}{(\kappa+1)q}\Leftrightarrow \beta=\frac{1}{2}+\frac{d\kappa}{4q(\kappa+1)}.$$
The condition $\beta<1$ is then equivalent to $q>\frac{d\kappa}{2(\kappa+1)}$
and the estimate
$$|F^{\sf w}(u)-F^{\sf w}(\bar{u})|_{X_0^{\sf w}}\le C(1+|u|_{X_\beta^{\sf w}}^{\kappa}+|\bar{u}|_{X_\beta^{\sf w}}^{\kappa})|u-\bar{u}|_{X_\beta^{\sf w}}.$$
holds for all $u,\bar{u}\in X_\beta^{\sf w}$. From \textbf{(H3)} with $\rho_1=\kappa$ and $\beta_1=\beta$, we obtain the critical weight
$$\mu_c=\frac{1}{p}+\frac{d}{4q}+\frac{\kappa-1}{2\kappa}.$$
Hence $\mu_c\le 1$ iff
$$\frac{1}{p}+\frac{d}{4q}\le \frac{\kappa+1}{2\kappa},$$
and
in case $\kappa\in (0,1)$  it is $\mu_c>1/p$ if and only if
$$q<\frac{d\kappa}{2(1-\kappa)}.$$
As $q>1$, this implies the lower bound $\kappa>2/(d+2)$.
The critical  space is then given by
$$X_{\gamma,\mu_c}^{\sf w}={_\nu}B_{qp}^{4(\mu_c-1/p)-2}(\Omega)={_\nu}B_{qp}^{d/q-2/\kappa}(\Omega),$$
where
\begin{equation*}
{_\nu B}_{qp}^{r}(\Omega):=
\left\{
\begin{aligned}
& \{u\in B_{qp}^{r}(\Omega):\partial_\nu u=0\} &&\text{for} && r\in (1+1/q,2],\\
& B_{qp}^{r}(\Omega) &&\text{for} && r\in [0,1+1/q),\\
\end{aligned}
\right.
\end{equation*}
and ${_\nu}B_{qp}^{-r}(\Omega):=({_\nu}B_{q'p'}^{r}(\Omega))'$ if $r\in [0,2]\backslash\{2-1/q\}$.
In case of the double-well potential $\Phi(s)=(s^2-1)^2$ we may set $\kappa=2$ in \eqref{eq:condPhi}, hence the critical space for this case reads $${_\nu}B_{qp}^{d/q-1}(\Omega).$$
\begin{theorem} \label{thm:Ex3}
Let $\Phi\in C^1(\R) $ and assume \eqref{eq:condPhi}. Suppose that $p\in (1,\infty)$ and $2/p+d/2q\le (\kappa+1)/\kappa$, 
where $q\in (1,\infty)$, $\kappa>2/(d+2)$ and 
$q<d\kappa/(2(1-\kappa))$ in case $\kappa\in (2/(d+2),1)$. 
Then, for each {$u_0\in {_\nu B}^{d/q-\kappa/2}_{qp}(\Omega)$}, problem \eqref{eq:CHweak}
admits a unique solution
$$ u \in H^1_{p,\mu_c}((0,a);{_\nu}H_q^{-2}(\Omega))\cap L_{p,\mu_c}((0,a); {_\nu}H_q^2(\Omega)),$$
for some {$a>0$}, with critical weight {$\mu_c = 1/p+ d/4q+(\kappa-1)/2\kappa$}. The solution exists on a maximal interval {$(0,t_+(u_0))$} and depends continuously on {$u_0$}.
In addition,
$$ u \in C([0,t_+); {_\nu B}^{d/q-\kappa/2}_{qp}(\Omega))\cap C((0,t_+);{_\nu B}^{2-4/p}_{qp}(\Omega)),
$$
i.e.\ the solutions regularize instantly if {$2/p +d/2q<(\kappa+1)/\kappa$}.
\end{theorem}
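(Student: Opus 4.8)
The plan is to verify that the Cahn--Hilliard problem, written in the weak formulation \eqref{eq:CHweak}, fits into the abstract framework of Theorem~\ref{main} (equivalently, into the bilinear-type setup with a quasilinear flavor absent here, so in fact into the setup underlying Theorem~\ref{thm1} with an affine nonlinearity), and then to read off the conclusions. First I would fix the functional-analytic setting: take $X_0^{\sf w}=X_{-1/2}$, $X_1^{\sf w}=X_{1/2}$, $A^{\sf w}=A_{-1/2}$, which by \cite{DDHPV04} and the interpolation-extrapolation machinery of the Appendix is a UMD operator in $\mathcal{BIP}$ (indeed $\mathcal{H}^\infty$) with angle $<\pi/2$, so that maximal $L_{p,\mu}$-regularity holds for every admissible $\mu$; in particular Condition~{\bf (S)} is satisfied via the mixed derivative theorem. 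The state and intermediate spaces are identified, as already done in the excerpt, through the reiteration/reflection identities \eqref{re-int}, \eqref{eq:ReitProp}: $X_\beta^{\sf w}={_\nu H}_q^{4\beta-2}(\Omega)$ and $(X_0^{\sf w},X_1^{\sf w})_{s,p}={_\nu B}_{qp}^{4s-2}(\Omega)$.

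Next I would check hypotheses {\bf (H1)}--{\bf (H3)}. Here $A(u)\equiv A^{\sf w}$ is constant, so {\bf (H1)} is trivial; the whole nonlinearity is placed in $F_2$. The estimate \eqref{eq:condPhi} together with H\"older's inequality and the Sobolev embedding $H_q^{4\beta-2}(\Omega)\hookrightarrow L_{(\kappa+1)q}(\Omega)$ — which forces the choice $\beta=\tfrac12+\tfrac{d\kappa}{4q(\kappa+1)}$ — gives the Lipschitz bound $|F^{\sf w}(u)-F^{\sf w}(\bar u)|_{X_0^{\sf w}}\le C(1+|u|_{X_\beta^{\sf w}}^\kappa+|\bar u|_{X_\beta^{\sf w}}^\kappa)|u-\bar u|_{X_\beta^{\sf w}}$, which is exactly {\bf (H2)} with $m=1$, $\rho_1=\kappa$, $\beta_1=\beta$. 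Condition {\bf (H3)} with these data reads $\kappa(\beta-(\mu-1/p))+(\beta-(\mu-1/p))\le 1-(\mu-1/p)$, i.e.\ the minimal admissible weight is $\mu_c=\tfrac1p+\beta-\tfrac{1-\beta}{\kappa}=\tfrac1p+\tfrac{d}{4q}+\tfrac{\kappa-1}{2\kappa}$, which is the claimed critical weight. The side constraints $\beta<1$, $\mu_c\le1$, $\mu_c>1/p$ translate into $q>\tfrac{d\kappa}{2(\kappa+1)}$, $\tfrac2p+\tfrac{d}{2q}\le\tfrac{\kappa+1}{\kappa}$, and (when $\kappa<1$) $q<\tfrac{d\kappa}{2(1-\kappa)}$, matching the hypotheses of the theorem; one also notes $\kappa>2/(d+2)$ is forced by $q>1$.

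Then I would invoke Theorem~\ref{main} (with the operator $A_0=A^{\sf w}$, which has maximal $L_p$-regularity uniformly, so the reference value $u_0$ plays no special role) to obtain, for each $u_0\in X_{\gamma,\mu_c}^{\sf w}=(X_0^{\sf w},X_1^{\sf w})_{\mu_c-1/p,p}={_\nu B}_{qp}^{d/q-2/\kappa}(\Omega)$, a unique local solution in $H^1_{p,\mu_c}((0,a);{_\nu}H_q^{-2}(\Omega))\cap L_{p,\mu_c}((0,a);{_\nu}H_q^2(\Omega))$, together with continuous dependence and the Lipschitz estimate in $\EE_{1,\mu_c}$. Extending to the maximal interval $[0,t_+(u_0))$ and the instantaneous regularization statement $u\in C((0,t_+);X_{\gamma,1})$ follow from the qualitative theory cited after Theorem~\ref{main} (K\"ohne--Pr\"uss--Wilke, Pr\"uss--Simonett--Zacher), using that for $\mu_c<1$ the trace space at positive times is $X_{\gamma,1}=(X_0^{\sf w},X_1^{\sf w})_{1-1/p,p}={_\nu B}_{qp}^{2-4/p}(\Omega)$; the intermediate continuity $u\in C([0,t_+);{_\nu B}_{qp}^{d/q-\kappa/2}(\Omega))$ — wait, the exponent asserted is $d/q-\kappa/2$, so one must double-check that $X_{\gamma,\mu_c}^{\sf w}$ indeed equals ${_\nu B}_{qp}^{d/q-2/\kappa}$ as computed, and that the displayed continuity space in the theorem uses the consistent exponent; this bookkeeping with the two reflection conventions (Bessel-potential vs.\ Besov, and the shift by $2$ from the extrapolation) is the one genuinely error-prone point. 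The main obstacle, such as it is, is therefore not analytic depth but the careful identification of all the interpolation-extrapolation spaces — in particular verifying $(X_{-1/2},X_{1/2})_{s,p}={_\nu B}_{qp}^{4s-2}$ including the boundary-condition bookkeeping across $s=$ the critical regularity thresholds — and confirming that the Sobolev embedding exponents are compatible with $q>1$; everything else is a direct application of Theorem~\ref{main}.
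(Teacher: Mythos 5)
Your proposal is correct and follows essentially the same route as the paper: rewrite \eqref{eq:CH} in the weak setting $X_0^{\sf w}=X_{-1/2}$, $X_1^{\sf w}=X_{1/2}$, $A^{\sf w}=A_{-1/2}$, use \eqref{eq:condPhi}, H\"older and the embedding $H_q^{4\beta-2}\hookrightarrow L_{(\kappa+1)q}$ to verify {\bf (H2)} with $\rho_1=\kappa$, $\beta_1=\beta=\tfrac12+\tfrac{d\kappa}{4q(\kappa+1)}$, read off $\mu_c$ from {\bf (H3)}, identify $X_{\gamma,\mu_c}^{\sf w}$ via \eqref{re-int}, and apply Theorem~\ref{main} together with the qualitative theory for the maximal interval and instantaneous regularization. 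Your bookkeeping concern is also well placed: the computation indeed gives the critical space ${_\nu B}_{qp}^{d/q-2/\kappa}(\Omega)$, so the exponent $d/q-\kappa/2$ printed in the theorem statement is a typo (the two agree only for $\kappa=2$), exactly as you suspected.
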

 \noindent
{The Cahn-Hilliard equation has been proposed in the pioneering work \cite{CaHi}, to model the separation of phases of a binary fluid. It has been subject of intensive research during the last decades, see for instance
\cite{AbWi,ElZhe,HR99} and the references therein.
So far, there seem to be no results on critical spaces for the Cahn-Hilliard equation.}

%

\section{Vorticity Equation}
Let $\Omega$ be a bounded, simply connected domain in $\RR^3$ with boundary $\Sigma:=\partial\Omega$ of class $C^{3}$. We consider the Navier-Stokes equation with boundary conditions of Navier type.
\begin{equation}
\label{NS}
\begin{aligned}
\partial_t u -\upmu\, \Delta u +u\cdot\nabla u +\nabla \pi&=0 && \mbox{in}\;\; \Omega,\\
{\rm div}\, u &=0 &&\mbox{in}\;\; \Omega,\\
u\cdot\nu =0,\; 2\upmu\, {\sf P}_\Sigma D(u) \nu +\alpha {\sf P}_\Sigma u &=0 && \mbox{on}\;\; \Sigma,\\
u(0)&=u_0 && \mbox{in}\;\; \Omega.
\end{aligned}
\end{equation}
Here $\upmu>0$ and $\alpha\geq0$ are constants, ${\sf P}_\Sigma = I-\nu\otimes\nu$
is the orthogonal projection onto the tangent bundle $T\Sigma$,
$D(u) = (\nabla u +[\nabla u]^{\sf T})/2$ denotes the symmetric velocity gradient and
$R(u)= (\nabla u -[\nabla u]^{\sf T})/2$ its asymmetric part, for future reference. 
The parameter $\alpha\ge 0$ takes friction on the boundary $\Sigma$ into consideration. 
If $\alpha=0$, we are in the case of  
pure-slip boundary conditions, whereas $\alpha>0$ corresponds to the case 
of partial-slip.  
We want to study this problem in terms of its vorticity and its stream function. By proper scaling we may assume $\upmu=1$.

\noindent
\subsection{The Navier Condition.}  We first reformulate the Navier boundary condition in a way that is more convenient
for our analysis. 
For this purpose, we make use of the splitting
\begin{equation*}
w=(I- \nu\otimes\nu)w + (w\cdot\nu)\nu=: w_\parallel +w_\nu\nu,
\end{equation*}
where $w_\parallel$ and $w_\nu$ denote the tangential and the normal part, respectively, of a vector field $w$
defined on $\Sigma$.
It is important to note that we can extend the unit normal 
$\nu$, defined on $\Sigma$, to a tubular neighborhood $U$ of $\Sigma$ according to 
$\tilde\nu(x) = \nu(\Pi_\Sigma(x))$, where $\Pi_\Sigma$ denotes the metric projection onto $\Sigma$; 
see for instance Pr\"uss and Simonett \cite[Chapter 2]{PrSi16} for more details.
We can then also extend the above decomposition of $w$ to the tubular neighborhood $U$.
In the following we always assume that
a given vector field $w$ defined on $\bar\Omega \cap U$ is decomposed in a tangential and normal component according to
\begin{equation}
\label{splitting-extended}
w=(I- \tilde\nu\otimes\tilde\nu)w + (w\cdot\tilde\nu)\tilde\nu=: w_\parallel +w_\nu\tilde\nu.
\end{equation}
In order to not overburden the notation, we will drop the tilde in the sequel.
Note that with this convention we have
\begin{equation}
\label{derivatives}
\partial_\nu \nu=0, \quad \partial_\nu w_\parallel\cdot \nu=0 \quad\mbox{on}\;\; \Sigma,
\end{equation}
for any vector field $w$ defined on $\bar\Omega \cap U$.
An easy computation then yields
\begin{equation*}
\begin{aligned}
2{\sf P}_\Sigma D(u) \nu &= \partial_\nu u_\parallel +{\sf L}_\Sigma u_\parallel +\nabla_\Sigma u_\nu &\text{on}\;\;\Sigma ,\\
2{\sf P}_\Sigma R(u) \nu &= -\partial_\nu u_\parallel +{\sf L}_\Sigma u_\parallel +\nabla_\Sigma u_\nu &\text{on}\;\;\Sigma , 
\end{aligned}
\end{equation*}
where
${\sf L}_\Sigma=-\nabla_\Sigma \nu$ is the Weingarten tensor and 
$\nabla_\Sigma$ denotes the surface gradient on $\Sigma$, see also \cite[Section 5.4]{PrWi17a}.
For dimension 3, we obtain
$$ 2{\sf P}_\Sigma R(u) \nu= \nu\times {\rm rot}\, u. $$
As $u_\nu=0$ by the first boundary condition, with 
${\sf B}_\Sigma = -2 {\sf L}_\Sigma -\alpha {\sf P}_\Sigma$ the second implies 
\begin{align*}
0&= 2{\sf P}_\Sigma D(u) \nu +\alpha {\sf P}_\Sigma u 
= \partial_\nu u_\parallel +{\sf L}_\Sigma u_\parallel +\alpha u_\parallel\\
&=  \partial_\nu u_\parallel -{\sf L}_\Sigma u_\parallel -{\sf B}_\Sigma u_\parallel
 = {\rm rot}\, u \times \nu -  {\sf B}_\Sigma u_\parallel,
\end{align*}
hence the Navier boundary conditions are equivalent to
\begin{equation}\label{nbc}
u\cdot\nu=0,\quad {\rm rot}\, u \times \nu = {\sf B}_\Sigma u\quad \mbox{on } \Sigma.
\end{equation}
Note that this is a lower order perturbation of the so-called {\em perfect slip} boundary conditions
$$  u\cdot\nu=0,\quad {\rm rot}\,u \times \nu =0.$$

\noindent
 \subsection{The Stream Function.} As ${\rm div}\, u=0$ in $\Omega$, $u\cdot\nu=0$ on $\Sigma=\partial\Omega$, and $\Omega$ is simply connected by assumption, there is a unique solution $v$ of the problem
\begin{equation}
\begin{aligned}
\label{SF}
{\rm rot}\, v&=u && \mbox{in}\;\; \Omega,\\
{\rm div}\, v&=0 && \mbox{in}\;\; \Omega,\\
v_\parallel &=0  && \mbox{on}\;\; \Sigma.
\end{aligned}
\end{equation}
We call $v$ the {\em stream function}, below.

\medskip\noindent
{\bf (i)}
We shall first show uniqueness. 
Suppose $u=0$ and $v$ is a solution of \eqref{SF}.
As ${\rm rot}\,v=0$ and $\Omega$ is simply connected, there exists a potential function $\phi$,
i.e. we have $v=\nabla \phi$. 
From the second line follows $\Delta\phi=0$ in $\Omega$, while 
the boundary condition $v_\parallel=0$ implies $\nabla_\Sigma \phi=0$ on $\Sigma$.
Hence, there is a constant $c$ such that $\phi\equiv c$ on $\Sigma$ and the function $\phi-c$ then solves the elliptic problem\
$$\Delta (\phi-c)=0\;\;\text{in}\;\;\Omega, 
\quad \phi-c=0 \;\;\text{on}\;\;\Sigma.$$ 
Therefore, $\phi$ is constant in all of $\Omega$ and $v=\nabla\phi=0$.

\medskip
\noindent
{\bf (ii)} 
In order to show existence, we consider the elliptic problem
\begin{equation}
\begin{aligned}
\label{LP}
-\Delta v &= {\rm rot}\, u && \mbox{in}\;\; \Omega,\\
v_\parallel &=0            && \mbox{on}\;\; \Sigma,\\
\partial_\nu v_\nu - \kappa_\Sigma v_\nu &=0     && \mbox{on}\;\; \Sigma,
\end{aligned}
\end{equation}
where $\kappa_\Sigma$ is the mean curvature (more precisely, the sum of the principal curvatures) of $\Sigma$.
For later use we record the important relationship
\begin{equation}
\label{div-boundary}
{\rm div}\,v = {\rm div}_\Sigma v_\parallel + \partial_\nu v_\nu -\kappa_\Sigma v_\nu\;\;\text{on}\;\;\Sigma.
\end{equation}
Hence the boundary conditions of \eqref{LP} imply ${\rm div}\,v=0$ on $\Sigma$.

Problem~\eqref{LP} is uniquely solvable, with $v=A^{-1}{\rm rot}\, u$,
where the operator $A$ is defined in Section 4.4 below.

\medskip
\noindent
{\bf (iii)} It remains to show that ${\rm div}\,v=0$ and ${\rm rot}\, v=u$ in $\Omega$.
The first assertion readily follows from the observation that
the solution $v$ of~\eqref{LP} satisfies
\begin{equation*}
\begin{aligned}
\Delta\, {\rm div}\,v &=0 &&\mbox{in}\;\; \Omega,\\
        {\rm div}\,v &=0 &&\mbox{on}\;\; \Sigma,   
\end{aligned}
\end{equation*}
which only admits the trivial solution. 
Hence, ${\rm rot}\,({\rm rot}\,  v -u)=0$ in $\Omega$,
and by simple connectedness of $\Omega$ this yields ${(\rm rot}\,v -u )=\nabla\phi$ for some harmonic function $\phi$.
We claim that
$({\rm rot}\,v -u)\cdot\nu=0$ on $\Sigma$. 
By assumption, $u\cdot\nu=0$. Hence
\begin{equation*}
({\rm rot}\,v-u)\cdot\nu 
=\big((\nabla_\Sigma +\nu\partial_\nu)\times (v_\parallel + v_\nu\nu)\big)\cdot\nu
=\big((\nabla_\Sigma v_\nu-\partial_\nu v_\parallel)\times \nu\big)\cdot\nu =0
\;\;\mbox{on}\;\;\Sigma.
\end{equation*}
Here we used  the properties that
$v_\parallel=0$, $\partial_\nu\nu=0$, and  $\nabla_\Sigma\times\nu=0$ on $\Sigma$.
The latter assertion can be verified by means of local coordinates, see for instance Section 2.1 in \cite{PrSi16},
\begin{equation*}
\nabla_\Sigma\times\nu
=\tau^j\partial_j\times\nu =\tau^j \times \partial_j\nu
=\tau^j \times l_{jk}\tau^k =\tau^1\times l_{12}\tau^2 + \tau^2\times l_{21}\tau^1=0, 
\end{equation*} 
as $l_{12}=l_{21}$.
Noting that $\phi$ solves the elliptic problem
\begin{equation*}
\Delta \phi =0 \;\;\mbox{in}\;\; \Omega, \quad
     \partial_\nu\phi =({\rm rot}\,v -u )\cdot\nu=0\;\;\mbox{on}\;\; \Sigma,
\end{equation*}
we conclude that $\phi$ is constant on $\Omega$,
and hence ${(\rm rot}\,v -u )=\nabla\phi=0$ in $\Omega$, i.e. ${\rm rot}\,v = u$.

\medskip\noindent
For $q\in (1,\infty)$ and  $s\in [-1,1]\setminus\{1/q, 1/q-1\}$ 
one can show that  
$$ 
A^{-1}{\rm rot}:  {_{\sf N} H}^{s}_{q}(\Omega) \to {_\parallel H}^{s+1}_q(\Omega),
$$
is linear and bounded,
where ${_{\sf N}H}^s_q(\Omega)$ is the complex interpolation-extrapolation scale associated to problem~\eqref{NS}, and where
the spaces ${_\parallel H}^s_q(\Omega)$ are defined in Section~4.4 below.
 
\noindent
\subsection{ The Vorticity Equation.}
We define the {\em vorticity} by means of $w:={\rm rot}\, u$.  As a consequence of 
the above considerations, we then have
$$ u ={\rm rot}\, v  ={\rm rot}\, A^{-1} {\rm rot}\, u ={\rm rot}\, A^{-1}w =:  L_0 w.$$
Observe that $ L_0$ is an operator of order $-1$.
This way $u$ is determined uniquely by the stream function $v$, or equivalently by the vorticity $w$.  
The latter property is usually referred to as the Biot-Savart law.
Note that
\begin{equation}
\begin{aligned}
\label{vortex-relation}
{\rm rot}\,({\rm div}\,(u\otimes u))
&={\rm rot}\,(u\cdot\nabla u) 
= (u\cdot\nabla)\, {\rm rot}\,u -({\rm rot}\, u\cdot\nabla) u\\
&= u\cdot\nabla w-w\cdot \nabla u
={\rm div}\,(u\otimes w)-{\rm div}\,(w\otimes u),
\end{aligned}
\end{equation}
as ${\rm div}\,u = {\rm div}\,w=0$.
The vorticity equation now reads
\begin{equation}
\begin{aligned}
\label{vorteq}
\partial_t w + u\cdot\nabla w -w\cdot\nabla u-\Delta w &=0 && \mbox{in}\;\; \Omega,\\
u &=  L_0 w && \mbox{in } \Omega, \\
w\times \nu =B_\Sigma u &=0 && \mbox{on}\;\; \Sigma,\\ 
 {\rm div}_\Sigma\, w_\parallel  + \partial_\nu w_\nu  -\kappa_\Sigma w_\nu &=0 && \mbox{on}\;\; \Sigma,\\
w(0)&=w_0 && \mbox{in}\;\; \Omega.
\end{aligned}
\end{equation}
Here we note that the second boundary condition ensures ${\rm div}\, w=0$, as soon as ${\rm div}\, w_0=0$, which is natural as $w_0={\rm rot }\, u_0$.
In fact, for $\phi={\rm div}\, w$ we then obtain, formally at least,
\begin{equation*}
\begin{aligned}
\partial_t \phi -\Delta \phi&=0 && \mbox{in}\;\; \Omega, \\
 \phi &=0 && \mbox{on}\;\; \Sigma,\\
\phi(0)&=0 && \mbox{in}\;\; \Omega,
\end{aligned}
\end{equation*}
hence ${\rm div}\, w =\phi=0$.
We also observe that with
$$ \nu\times(w\times\nu) = w(\nu\cdot\nu) -\nu(w\cdot\nu) = w_\parallel,$$
the first boundary condition for $w$ can be rephrased as
\begin{equation}
\label{w-parallel}
 w_\parallel =\nu\times {\sf B}_\Sigma u =:  L_1w.
\end{equation}
Observe that $L_1= \nu\times {\sf B}_\Sigma L_0$ is an operator of order $-1$, hence a lower order perturbation.
We also recall that we have
$$u\cdot\nabla u = {\rm div}(u\otimes u),$$
as ${\rm div}\, u=0$. This will be useful for the very weak formulation below.

\medskip

\noindent
\subsection{ The Scale of the Principal Operator $A$.}
We define the principal operator $A$ in $L_q(\Omega)^3$ by means of
\begin{equation} \label{princop}
Aw=-\Delta w, \quad w\in {\sf D}(A) =\{ w\in H^2_q(\Omega)^3:
\, w_\parallel =0,\;\; \partial_\nu w_\nu-\kappa_\Sigma w_\nu=0\}.
\end{equation}
The operator $A$ has some beautiful properties.
Firstly, it 
admits an $\cH^\infty$-calculus with $\cH^\infty$-angle $\phi_A^\infty=0$ in $L_q(\Omega)^3$. 
Next we note that $A$ is positive definite in $L_2(\Omega)^3$. In order to see this, we employ the relation
$- \Delta  w = {\rm rot}\,{\rm rot}\, w -  \nabla {\rm div}\, w$ and integrate by parts. This yields
\begin{equation}
\label{Aww}
\begin{aligned}
-(\Delta w|w)_{\Omega} &= |{\rm rot}\, w|^2_{\Omega} + (\nu\times {\rm rot}\, w|w)_{\Sigma} 
 + |{\rm div}\, w|^2_{\Omega} - ({\rm div}\,w|w_\nu)_{\Sigma} \\ 
&=|{\rm rot}\, w|^2_{\Omega} + |{\rm div}\, w|^2_{\Omega},
\end{aligned}
\end{equation}
where $(u|v)_\Omega:=(u|v)_{L_2(\Omega)}$ and $(u|v)_\Sigma:=(u|v)_{L_2(\Sigma)}$.   
Here we used  $w_\parallel=0$,  and \eqref{div-boundary}. 
This shows that $A$ is positive semi-definite.
Suppose $Aw=0.$ Then \eqref{Aww} implies ${\rm rot}\,w=0$ and ${\rm div}\,w=0$ in $\Omega$.
By uniqueness of problem~\eqref{SF}, see Step (i) in Section 4.2, $w=0$. 
Hence $A$ is injective in $L_2(\Omega)^3$.
As $A$ has compact resolvent in $L_2(\Omega)^3$, its spectrum consists 
of eigenvalues of finite multiplicity.
Therefore, $0$ lies in the resolvent set of $A$ in $L_2(\Omega)^3$.
Since the spectrum of $A$ is independent of $q\in (1,\infty)$, we 
conclude that $A$ is invertible in $L_q(\Omega)^3$ for all $q\in (1,\infty)$.
In summary, $A$ is invertible, and the spectrum of $A$ in $L_q(\Omega)^3$ consists only of eigenvalues of 
finite multiplicity which are all positive.

The pair $(L_q(\Omega), A)$ generates an interpolation-extrapolation scale, see the Appendix, and
we have explicit expressions for the extrapolation-interpolation spaces, i.e. we have for 
${_\parallel H}^{2s}_q(\Omega):=(L_q(\Omega),{\sf D}(A))_s$

\begin{equation*}
{_\parallel H}^{2s}_q(\Omega)=
\left\{
\begin{aligned}
& \{w\in H^{2s}_q(\Omega)^3:\, w_\parallel=0,\, \partial_\nu w_\nu -{\kappa_\Sigma w_\nu} =0\}, && \!\!\! s\in (1/2+1/2q,1),\\
& \{w\in H^{2s}_q(\Omega)^3:\, w_\parallel=0\}, && \!\!\! s\in (1/2q,1/2+1/2q),\\
& H^{2s}_q(\Omega)^3, && \!\!\! s\in (0,1/2q),
\end{aligned}
\right.
\end{equation*}
and for ${_\parallel B}^{2s}_{qp}(\Omega):=(L_q,{\sf D}(A))_{s,p}$,
\begin{equation*}
{_\parallel B}^{2s}_{qp}(\Omega)=
\left\{
\begin{aligned}
& \{w\in B^{2s}_{qp}(\Omega)^3:\,w_\parallel=0,\, \partial_\nu w_\nu -{\kappa_\Sigma w_\nu} =0\}, && \!\!\!  s\in (1/2+1/2q,1),\\
& \{w\in B^{2s}_{qp}(\Omega)^3:\, w_\parallel=0\}, && \!\!\! s\in (1/2q,1/2+1/2q),\\
& B^{2s}_{qp}(\Omega)^3,  && \!\!\! s\in (0,1/2q).
\end{aligned}
\right.
\end{equation*}
Moreover,
\begin{equation*}
{_\parallel H}^{-2s}_q(\Omega):=\big({_\parallel H}^{2s}_{q^\prime}(\Omega)\big)^\prime,\quad
{_\parallel B}^{-2s}_{qp}(\Omega) := \big({_\parallel B}^{2s}_{q^\prime p^\prime}(\Omega)\big)^\prime, 
\end{equation*}
for $s\in [0,1]\setminus\{1/2-1/2q, 1-1/2q\}.$

\medskip
Moreover, $A$ commutes with the {\em Weyl projection} $\PP_W$ defined by
$w= \PP_W w+\nabla\varphi$, where
$$\Delta \varphi ={\rm div}\, w \quad \mbox{in } \Omega,\quad \varphi=0 \quad \mbox{on } \Sigma.$$
Therefore, its restriction $A_0$ to $X_0 :=\PP_W L_q(\Omega)^3=: {_\parallel L}_{q,\sigma}(\Omega)$  with domain
\begin{equation*}
\begin{aligned}
&X_1:={\sf D}(A_0)= \PP_W{\sf D}(A), \\
&X_1=\{ w\in H^2_q(\Omega)^3: {\rm div}\, w =0 \; \mbox{in}\; \Omega, w_\parallel = 0\, \mbox{on}\; \Sigma\}
=: {_\parallel H}^2_{q,\sigma}(\Omega), 
\end{aligned}
\end{equation*}
has the same properties as $A$.
We note on the go that the conditions ${\rm div}\,w=0$ and $w_\parallel =0$ imply 
{$\partial_\nu w_\nu - \kappa_\Sigma w_\nu=0$.}

Hence,  
the pair $(X_0,A_0)$ generates the complex interpolation scale $(X_\alpha,A_\alpha)$, $\alpha\in \RR$,
see the Appendix.
Here we are particularly interested in the cases $\alpha=-1/2$ for the weak formulation 
and $\alpha= -1$ for the very weak setting.  Observe that all these spaces are of class $U\!M\!D$, and all these operators 
admit an $\cH^\infty$-calculus with $\cH^\infty$-angle $\phi_A^\infty=0$.
The corresponding complex interpolation spaces are given by
$$ X_\alpha = {\sf D}(A_0^\alpha)=\PP_W{\sf D}(A^\alpha),\quad D_{A_0}(\alpha,p) = \PP_W D_A(\alpha,p),\quad \mbox{for} \; \alpha>0,$$
and
$$ X_{\alpha}= \big({\sf D}([A_0^\#]^{-\alpha})\big)^\prime, 
\quad D_{A_0}(\alpha,p)= \big(D_{A^\#_0}(-\alpha,p^\prime)\big)^\prime,\quad \mbox{for} \; \alpha<0.$$
Here $A_0^\#$ means $A_0$ considered in $X_0^\sharp =L_{q^\prime}(\Omega)$, i.e.\ in the dual scale.
In the sequel, we set
$$ {_\parallel H}^s_{q,\sigma}(\Omega):= X_{s/2} 
\quad \mbox{and}\quad {_\parallel B}^s_{qp,\sigma}(\Omega):= D_{A_0}(s/2,p):=(X_0,{\sf D}(A_0))_{s/2,p}.$$

\medskip

\noindent
\subsection{Very Weak Formulation.}
In the very weak formulation we define
$$X_0^{\sf vw}= X_{-1}= {_\parallel H}^{-2}_{q,\sigma}(\Omega), \quad X_1^{\sf  vw}= X_{0}=:{_\parallel L}_{q,\sigma}(\Omega).$$
Then for  $\phi \in {_\parallel H}^2_{q^\prime,\sigma}(\Omega)$ we obtain with two integrations by parts
\begin{align*}
0&= (\partial_t w -\Delta w +{\rm rot}\,(u\cdot\nabla u) |\phi)_\Omega\\
&= (\partial_t w|\phi)_\Omega -(w|\Delta\phi)_\Omega - (u\otimes u:{\nabla\rm rot}\, \phi)_\Omega
 -(\partial_\nu w|\phi)_\Sigma +(w|\partial_\nu\phi)_\Sigma\\
 &= \langle\partial_t w + A_{-1} w|\phi\rangle - \langle (B^{\sf vw} w+F^{\sf vw}(w)|\phi\rangle,
\end{align*}
with
$$ \langle F^{\sf vw}(w)|\phi\rangle= (u\otimes u|\nabla{\rm rot}\, \phi)_\Omega,\quad 
\langle B^{\sf vw} w|\phi\rangle =
(\nu\times{\sf B}_\Sigma u|\partial_\nu \phi_\parallel -\nabla_\Sigma \phi_\nu)_\Sigma.$$
Here the expression for $B^{\sf vw}$ was derived as follows.
From \eqref{derivatives}, \eqref{vorteq},  
the surface divergence theorem, and $\phi_\parallel=0$,  {$\partial_\nu\phi_\nu-\kappa_\Sigma\phi_\nu =0$} on $\Sigma$
we obtain
\begin{equation*}
\begin{aligned}
(\partial_\nu w|\phi)_\Sigma 
&=(\partial_\nu w_\parallel +(\partial_\nu w_\nu)\nu|\phi_\nu\nu)_\Sigma
=((\partial_\nu w_\nu)\nu|\phi_\nu\nu)_\Sigma \\
&= (-{\rm div}_\Sigma w_\parallel {+\kappa_\Sigma w_\nu} |\phi_\nu )_\Sigma 
=(w_\parallel |\nabla_\Sigma \phi_\nu)_\Sigma {+ ( \kappa_\Sigma w_\nu |\phi_\nu )_\Sigma }
 \end{aligned}
\end{equation*}
and 
\begin{equation*}
(w|\partial_\nu\phi)_\Sigma 
= {(w_\parallel | \partial_{\nu}\phi_\parallel)_\Sigma  + (w_\nu |\kappa_\Sigma \phi_\nu)_\Sigma.} 
\end{equation*}
 This shows that the very weak formulation of the vorticity equation reads
\begin{equation}\label{vorteq-vw}
\partial_t w + A_{-1} w = B^{\sf vw} w + F^{\sf vw}(w),\; t>0,\quad w(0)=w_0.
\end{equation}
Here $B^{\sf vw}$ is a linear lower order perturbation and $F^{\sf vw}$ is bilinear.

To show that $B^{\sf vw}$ is lower order, we estimate as follows.
\begin{align*}
 |\langle B^{\sf vw}w|\phi\rangle|&\leq |\nu \times {\sf B}_\Sigma u|_{L_q(\Sigma)}|\partial_\nu \phi_\parallel -\nabla_\Sigma \phi_\nu|_{L_{q^\prime}(\Sigma)}\\
 &\leq |u|_{H^{1/q+\varepsilon}_q(\Omega)} |\phi|_{H^{1+1/q^\prime+\varepsilon}_{q^\prime}(\Omega)}\\
& \leq |w|_{{_\parallel H}^{1/q+\varepsilon -1}_q(\Omega)}|\phi|_{H^{2-1/q+\varepsilon}_{q^\prime}(\Omega)}.
\end{align*}
Here we have employed 
the mapping properties of ${\rm rot}$ and $A^{-1}$ for $u={\rm rot}\, A^{-1}w$. 
This shows that
$$B^{\sf vw} : {_\parallel H}^{1/q+\varepsilon -1}_{q,\sigma}(\Omega) \to  {_\parallel H}^{1/q-\varepsilon -2}_{q,\sigma}(\Omega),$$
hence $B^{\sf vw}$ is a lower order perturbation of $A_{-1}$. Below we set $A^{\sf vw}=A_{-1}-B^{\sf vw}$, 
and observe that $A^{\sf vw}$ also admits a bounded $\cH^\infty$-calculus with $\cH^\infty$-angle 0.

The bilinearity $F^{\sf vw}(w)=G^{\sf vw}(w,w)$ can be estimated as follows
$$|\langle G^{\sf vw}(w_1,w_2)|\phi\rangle|= |(u_1\otimes u_2|\nabla{\rm rot}\, \phi)_\Omega|\leq |u_1|_{L_{2q}(\Omega)}|u_2|_{L_{2q}(\Omega)}|\phi|_{H^2_{q^\prime}(\Omega)},$$
hence $G^{\sf vw}:X_{\beta^{\sf vw}}^{\sf vw}\times X_{\beta^{\sf vw}}^{\sf vw}\to X_0^{\sf vw}$ is bounded, with
$$\beta^{\sf vw}= 1/2+3/4q<1\quad \mbox{and}\quad  \mu_c^{\sf vw}-1/p = 2\beta^{\sf vw}-1= 3/2q.$$
 So here we require $2/p+3/q\leq2$, to have $\mu_c^{\sf vw}\leq1$.
Now we are in position to apply the results from Section 2 to this very weak setting of the vorticity equation, useful to cover the range $q>3/2$, with  critical space $X_{\gamma,\mu_c^{\sf vw}}^{\sf vw} = {_\parallel B}^{3/q-2}_{qp,\sigma}(\Omega)$.

\noindent
\begin{theorem} \label{thm:vwvort}
Let $q\in (3/2,\infty)$, $p\in (1,\infty)$ such that $2/p+3/q\leq2$.

Then, for each  $w_0\in {_\parallel B}^{3/q-2}_{qp,\sigma}(\Omega)$, the vorticity equation \eqref{vorteq}
admits a unique very weak solution
$$ w \in H^1_{p,\mu^{\sf vw}_c}((0,a); {_\parallel H}^{-2}_{q,\sigma}(\Omega)))\cap L_{p,\mu^{\sf vw}_c}((0,a); L_q(\Omega)),$$
for some $a>0$, with critical weight $\mu^{\sf vw}_c = 1/p+ 3/2q$. The solution exists on a maximal interval $(0,t_+(w_0))$ and depends continuously on $w_0$.
In addition,
$$ w \in C([0,t_+); {_\parallel B}^{3/q-2}_{qp,\sigma}(\Omega))
\cap C((0,t_+);{_\parallel B}^{2(1-1/p)-2}_{qp,\sigma}(\Omega)),
$$
i.e.\ the solutions regularize instantly if $2/p +3/q<2$.
\end{theorem}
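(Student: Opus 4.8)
The plan is to obtain Theorem~\ref{thm:vwvort} as a direct application of Theorem~\ref{thm1} to the very weak formulation \eqref{vorteq-vw}, which -- as derived above -- is the defining equation for very weak solutions of \eqref{vorteq}. All structural ingredients required by Theorem~\ref{thm1} have in fact already been assembled in the preceding subsections, and the first step is merely to record them in the right form: the base space $X_0^{\sf vw}={_\parallel H}^{-2}_{q,\sigma}(\Omega)$ is a closed subspace of a space modelled on $L_q(\Omega)$, hence of class UMD; the operator $A^{\sf vw}=A_{-1}-B^{\sf vw}$ admits a bounded $\cH^\infty$-calculus with $\cH^\infty$-angle $0$ on $X_0^{\sf vw}$, so in particular $A^{\sf vw}\in\mathcal{BIP}(X_0^{\sf vw})$ with power angle $\theta_{A^{\sf vw}}=0<\pi/2$, and it is invertible because $A_0$ is invertible and $B^{\sf vw}$ is a strictly lower order perturbation; and $G^{\sf vw}$ is a bounded bilinear map from $X_{\beta^{\sf vw}}^{\sf vw}\times X_{\beta^{\sf vw}}^{\sf vw}$ to $X_0^{\sf vw}$, where $X_\beta^{\sf vw}={\sf D}((A^{\sf vw})^\beta)=(X_0^{\sf vw},X_1^{\sf vw})_\beta$ and $\beta^{\sf vw}=1/2+3/(4q)$, with $X_1^{\sf vw}={_\parallel L}_{q,\sigma}(\Omega)$.

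Next I would verify the numerical hypotheses of Theorem~\ref{thm1} with $\beta=\beta^{\sf vw}$ and $\mu=\mu_c^{\sf vw}=1/p+3/(2q)$. The standing assumption $q>3/2$ yields simultaneously $\beta^{\sf vw}<1$ and $3/(4q)<1/2$, the latter being equivalent to $\beta^{\sf vw}>3/(2q)=\mu_c^{\sf vw}-1/p$; hence $\beta^{\sf vw}\in(\mu_c^{\sf vw}-1/p,1)$. The assumption $2/p+3/q\le 2$ is precisely $\mu_c^{\sf vw}\le 1$, so $\mu_c^{\sf vw}\in(1/p,1]$. Finally $2\beta^{\sf vw}-1=3/(2q)=\mu_c^{\sf vw}-1/p$, so \eqref{3.2} holds with equality: this is the critical case, and the structural condition needed there is supplied by the fact that $A^{\sf vw}$ has an $\cH^\infty$-calculus of angle $0<\pi/2$ with domain $X_1^{\sf vw}$, via the mixed derivative theorem (cf.\ the Remark following Theorem~\ref{main}). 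Theorem~\ref{thm1} then provides, for each $w_0\in X_{\gamma,\mu_c^{\sf vw}}^{\sf vw}$, a unique local solution $w\in H^1_{p,\mu_c^{\sf vw}}((0,a);X_0^{\sf vw})\cap L_{p,\mu_c^{\sf vw}}((0,a);X_1^{\sf vw})$ on a maximal interval $[0,t_+(w_0))$, depending continuously on $w_0$, which in addition lies in $H^1_{p,loc}((0,t_+);X_0^{\sf vw})\cap L_{p,loc}((0,t_+);X_1^{\sf vw})\hookrightarrow C((0,t_+);X_{\gamma,1}^{\sf vw})$ and regularizes instantly when $\mu_c^{\sf vw}<1$.

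The remaining step is to translate the abstract spaces into the Besov scale of Section~4.4. Since $B^{\sf vw}$ is of strictly lower order, the real interpolation spaces $D_{A^{\sf vw}}(\cdot,p)$ coincide with $D_{A_{-1}}(\cdot,p)$, and the reiteration/shift property of the interpolation-extrapolation scale $(X_\alpha,A_\alpha)$ gives $(X_{-1},X_0)_{\theta,p}=D_{A_0}(\theta-1,p)={_\parallel B}^{2(\theta-1)}_{qp,\sigma}(\Omega)$. Taking $\theta=\mu_c^{\sf vw}-1/p=3/(2q)$ identifies the critical space $X_{\gamma,\mu_c^{\sf vw}}^{\sf vw}={_\parallel B}^{3/q-2}_{qp,\sigma}(\Omega)$, and $\theta=1-1/p$ identifies the state space $X_{\gamma,1}^{\sf vw}={_\parallel B}^{2(1-1/p)-2}_{qp,\sigma}(\Omega)$, so the instantaneous smoothing takes place in ${_\parallel B}^{2(1-1/p)-2}_{qp,\sigma}(\Omega)$ as soon as $2/p+3/q<2$. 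This delivers all the assertions of the theorem. The only genuinely nontrivial input is the one already handled in Sections~4.2--4.4 -- the mapping properties of the Biot--Savart operator ${\rm rot}\,A^{-1}$ and the resulting fact that $B^{\sf vw}$ is of order strictly below that of $A_{-1}$ -- since that is what secures both the $\cH^\infty$-calculus for $A^{\sf vw}$ and the coincidence of the interpolation spaces; given these, the proof reduces to a routine verification of the hypotheses of Theorem~\ref{thm1}.
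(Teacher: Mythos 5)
Your proposal is correct and follows essentially the same route as the paper: the paper's argument for Theorem~\ref{thm:vwvort} consists precisely of the preparations in Subsections 4.2--4.5 (Biot--Savart mapping properties, the scale of $A$, the lower-order nature of $B^{\sf vw}$, the bilinear estimate for $G^{\sf vw}$ with $\beta^{\sf vw}=1/2+3/4q$) followed by an application of Theorem~\ref{thm1} in the critical case, together with the identification $(X_{-1},X_0)_{\theta,p}={_\parallel B}^{2\theta-2}_{qp,\sigma}(\Omega)$ via the dual interpolation-extrapolation scale, exactly as you outline. Your remarks on invertibility of $A^{\sf vw}$ and on $D_{A^{\sf vw}}(\cdot,p)=D_{A_{-1}}(\cdot,p)$ are superfluous but harmless, since the local result needs neither and $X_{\gamma,\mu}^{\sf vw}$ depends only on the couple $(X_0^{\sf vw},X_1^{\sf vw})$.
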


\medskip

\noindent
\subsection{ Weak Formulation.}
In the weak formulation of the vorticity equation we choose
$X_0^{\sf w}= X_{-1/2}= {_\parallel H}^{-1}_{q,\sigma}(\Omega)$.
 Then for $\phi \in {_\parallel H}^1_{q^\prime,\sigma}(\Omega)$ we obtain with an integration by parts
\begin{align*}
0&= (\partial_t w -\Delta w +{\rm rot}\,(u\cdot\nabla u) |\phi)_\Omega\\
&= (\partial_t w|\phi)_\Omega +(\nabla w|\nabla \phi)_\Omega - (u\cdot\nabla u|{\rm rot}\, \phi)_\Omega
 -(\partial_\nu w|\phi)_\Sigma \\
 &= \langle\partial_t w + A^{\sf w} w|\phi\rangle - \langle F^{\sf w}(w)|\phi\rangle,
\end{align*}
with
$$ \langle F^{\sf w}(w)|\phi\rangle= (u\cdot\nabla u|{\rm rot}\, \phi)_\Omega,\quad \langle A^{\sf w} w|\phi\rangle 
=(\nabla w|\nabla \phi)_\Omega + 
({\rm div}_\Sigma w_\parallel -\kappa_\Sigma w_\nu |\phi_\nu )_\Sigma,
$$ 
and we keep the boundary condition
$$ w_\parallel = \nu\times{\sf B}_\Sigma u \quad \mbox{on } \Sigma.$$
This means
$$X_1^{\sf w}={\sf D}(A^{\sf w})=\{ w\in H^1_q(\Omega)^3:\, {\rm div}\, w=0 \;\mbox{in}\; \Omega, \; w_\parallel = \nu\times {\sf B}_\Sigma u \; \mbox{on} \; \Sigma\}.$$
 Then  the weak formulation of the vorticity equation reads
\begin{equation}\label{vorteq-w}
\partial_t w + A^{\sf w} w = F^{\sf w}(w),\; t>0,\quad w(0)=w_0.
\end{equation}
The operator $A^{\sf w}$  generates its own scale, which differs from that of $A_0$ through the boundary 
condition $w_\parallel =\nu\times B_\Sigma u$. By definition of $A^{\sf w}$ and an integration by parts it follows that $A^{\sf w}=A^{\sf vw}_{1/2}$. 
In particular, $A^{\sf w}$ admits a bounded $\cH^\infty$-calculus with angle 0 as well.

Next, we estimate the bilinearity $F^{\sf w}(w) = G^{\sf w}(w,w)$ as follows.
$$|\langle G^{\sf w}(w_1,w_2)|\phi\rangle|= |(u_1\cdot \nabla u_2|{\rm rot}\, \phi)_\Omega|\leq |u_1|_{L_{qr^\prime}(\Omega)}|u_2|_{H^1_{qr}(\Omega)}|\phi|_{H^1_{q^\prime}(\Omega)},$$
where we choose $r>1$ in such a way that the Sobolev indices of $L_{qr^\prime}$ and $H^1_{qr}$ are equal, which means
$$ 1-3/qr = -3/qr^\prime = -3/q +3/qr,\quad \mbox{i.e.} \quad 3/qr = (1+3/q)/2.$$
This is feasible if $q<3$. Then we have with
$X_{\beta^{\sf w}}^{\sf w} = {_\parallel H}^{2\beta^{\sf w}-1}_{q,\sigma}(\Omega)$
$$ G^{\sf w} :  X_{\beta^{\sf w}}^{\sf w} \times X_{\beta^{\sf w}}^{\sf w} \to X_0^{\sf w} \quad \mbox{bounded},$$
provided
$$\beta^{\sf w} = (1+3/q)/4, \quad \mu^{\sf w}_c -1/p = 2\beta^{\sf w} -1 = (3/q-1)/2.$$
Obviously, $\beta^{\sf w}<1$ and for $\mu_c^{\sf w}\leq1$ we require $ 2/p+ 3/q \leq3$.
As a consequence, the results of Section 2 apply to the vorticity equation in the weak setting for $q<3$, with critical space $ X^{\sf w}_{\gamma,\mu_c^{\sf w}} = {_\parallel B}_{qp,\sigma}^{3/q-2}(\Omega)$, the same spaces as for the very weak formulation in case $3/2<q< 3$. We observe that the Sobolev indices of these critical spaces equal $-2$, i.e.\ it is independent of $q$.

\noindent
\begin{theorem} \label{thm:wvort}
Let $q\in (1,3)$, $p\in (1,\infty)$ such that $2/p + 3/q\leq3$.

Then, for each  $w_0\in {_\parallel B}^{3/q-2}_{qp,\sigma}(\Omega)$, the vorticity equation \eqref{vorteq}
admits a unique weak solution
$$ w \in H^1_{p,\mu^{\sf w}_c}((0,a); {_\parallel H}^{-1}_{q,\sigma}(\Omega))\cap L_{p,\mu^{\sf w}_c}((0,a);  H^1_q(\Omega)),$$
for some $a>0$, with critical weight $\mu^{\sf w}_c = 1/p+ 3/2q -1/2$. 
The solution exists on a maximal interval $(0,t_+(w_0))$ and depends continuously on $w_0$.
In addition,
$$ w \in C([0,t_+); {_\parallel B}^{3/q-2}_{qp,\sigma}(\Omega))\cap C((0,t_+); B^{2(1-1/p)-1}_{qp}(\Omega)^3),
$$
i.e.\ the solutions regularize instantly if $2/p +3/q<3$.
\end{theorem}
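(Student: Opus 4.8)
The plan is to obtain Theorem~\ref{thm:wvort} as a direct application of Theorem~\ref{thm1} to the weak formulation~\eqref{vorteq-w} in the critical case $\mu=\mu_c^{\sf w}$ --- the same scheme already used for Theorem~\ref{thm:vwvort} --- and then to translate the abstract conclusions back into the concrete spaces ${_\parallel B}^{\,\cdot}_{qp,\sigma}(\Omega)$. First I would record that the triple $(X_0^{\sf w},X_1^{\sf w},A^{\sf w})$ meets the standing hypotheses of Theorem~\ref{thm1}: the base space $X_0^{\sf w}={_\parallel H}^{-1}_{q,\sigma}(\Omega)$ belongs to the interpolation-extrapolation scale of $A_0$ over the UMD space $L_q(\Omega)^3$ and is therefore itself UMD, $X_1^{\sf w}={\sf D}(A^{\sf w})$ embeds densely into it, and, as established in Section~4.6, $A^{\sf w}=A^{\sf vw}_{1/2}$ admits a bounded $\cH^\infty$-calculus with angle $0$; in particular $A^{\sf w}\in\mathcal{BIP}(X_0^{\sf w})$ with power angle $0<\pi/2$. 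The nonlinearity is $F^{\sf w}(w)=G^{\sf w}(w,w)$ for the bounded bilinear form $\langle G^{\sf w}(w_1,w_2)|\phi\rangle=(u_1\cdot\nabla u_2|{\rm rot}\,\phi)_\Omega$ with $u_i=L_0w_i$, and the estimate derived above shows $G^{\sf w}\colon X^{\sf w}_{\beta^{\sf w}}\times X^{\sf w}_{\beta^{\sf w}}\to X_0^{\sf w}$ is bounded with $\beta^{\sf w}=(1+3/q)/4\in(1/2,1)$ for $q\in(1,3)$, the H\"older exponent $r$ with $3/qr=(1+3/q)/2$ being admissible precisely because $q<3$.

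Next I would verify the numerical conditions of Theorem~\ref{thm1} at $\mu=\mu_c^{\sf w}$. By the definition of the critical weight, $2\beta^{\sf w}-1=(3/q-1)/2=\mu_c^{\sf w}-1/p$, so \eqref{3.2} holds with equality and we are in the critical case; the side conditions $\mu_c^{\sf w}>1/p$, $\mu_c^{\sf w}\le1$ and $\beta^{\sf w}>\mu_c^{\sf w}-1/p$ reduce respectively to $q<3$, $2/p+3/q\le3$ and $q>1$, all of which are hypotheses of the theorem. Theorem~\ref{thm1} then provides, for every $w_0\in X^{\sf w}_{\gamma,\mu_c^{\sf w}}$, a unique local solution $w\in H^1_{p,\mu_c^{\sf w}}((0,a);X_0^{\sf w})\cap L_{p,\mu_c^{\sf w}}((0,a);X_1^{\sf w})$, a maximal interval $[0,t_+(w_0))$, continuous dependence on $w_0$, and instant regularization into $C((0,t_+);X^{\sf w}_{\gamma,1})$ whenever $\mu_c^{\sf w}<1$, i.e.\ $2/p+3/q<3$. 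Since the equation is posed in the solenoidal space ${_\parallel H}^{-1}_{q,\sigma}(\Omega)$, the constraint ${\rm div}\,w=0$ is propagated automatically, and $w$ is a weak solution of~\eqref{vorteq} by the very construction of~\eqref{vorteq-w}.

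It then remains to identify the abstract spaces. At the regularity level $2(\mu_c^{\sf w}-1/p)-1=3/q-2$, which for $q\in(1,3)$ lies strictly below the trace threshold $1/q$, neither the boundary condition built into $X_1^{\sf w}$ nor the lower-order perturbation $B^{\sf vw}$ is felt, so $(X_0^{\sf w},X_1^{\sf w})_{\mu_c^{\sf w}-1/p,p}$ agrees with $(X_{-1/2},X_{1/2})_{\mu_c^{\sf w}-1/p,p}={_\parallel B}^{3/q-2}_{qp,\sigma}(\Omega)$, the last identity following from reiteration within the $A_0$-scale; for the state space one settles for the embedding $X^{\sf w}_{\gamma,1}\hookrightarrow B^{2(1-1/p)-1}_{qp}(\Omega)^3$, since traces may be present at that level. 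This yields all assertions of the theorem.

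I expect the substantive obstacles to be the ones already dispatched in Sections~4.4--4.6 rather than anything in the present argument: the order-$(-1)$ mapping properties of $L_0={\rm rot}\,A^{-1}$ (equivalently, the Biot--Savart law) that make the bilinear estimate for $G^{\sf w}$ go through and force the restriction $q<3$, and the identification of the scale generated by $A^{\sf w}$ --- which differs from the shifted $A_0$-scale through the Navier-type boundary condition $w_\parallel=\nu\times{\sf B}_\Sigma u$ --- with the concrete Besov scale in the window of indices used here. Granting those, the proof reduces to checking that the hypotheses of Theorem~\ref{thm1} hold and reading off the spaces.
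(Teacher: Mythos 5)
Your proposal is correct and follows essentially the same route as the paper: the paper likewise obtains Theorem~\ref{thm:wvort} by applying the Section~2 results (Theorem~\ref{thm1} in the critical case) to the weak formulation \eqref{vorteq-w}, using the bilinear estimate with $\beta^{\sf w}=(1+3/q)/4$, the calculus properties of $A^{\sf w}=A^{\sf vw}_{1/2}$, and the identification $X^{\sf w}_{\gamma,\mu_c^{\sf w}}={_\parallel B}^{3/q-2}_{qp,\sigma}(\Omega)$. Your added remarks on why the boundary condition is not seen at the regularity level $3/q-2$ only make explicit what the paper leaves implicit.
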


\subsection{Conditional Global Existence}
Next we employ the abstract Serrin condition to characterize global existence. For this purpose we only need to compute the spaces $X_{\mu_c^{\sf w}}^{\sf w}$ and $X_{\mu_c^{\sf vw}}^{\sf vw}$. We have
$$ X_{\mu_c^{\sf w}}^{\sf w} = {_\parallel H}^{2\mu_c^{\sf w}-1}_{q,\sigma}(\Omega) = {_\parallel H}^{2/p+3/q-2}_{q,\sigma}(\Omega)={_\parallel H}^{2\mu_c^{\sf vw}-2}_{q,\sigma}(\Omega) = X_{\mu_c^{\sf vw}}^{\sf vw},$$
a surprise? This yields with Theorem \ref{thm5} the following result.

\begin{theorem}  
Let $p\in (1,\infty)$, $q\in (1,\infty)$ such that $s:=2/p+3/q\leq 2$, and $s\leq 3$ in case $q<3$.
 Assume {$w_0\in {_\parallel B}^{3/q-2}_{qp,\sigma}(\Omega)$, and let $w$ denote the unique weak or very weak solution of \eqref{vorteq} 
 according to Theorems \ref{thm:wvort} or \ref{thm:vwvort}, with maximal interval of existence $[0,t_+)$. Then
 \begin{enumerate}
 \item[{\bf (i)}]  $w\in L_p((0,a);{_\parallel H}^{s-2}_{q,\sigma}(\Omega))$, for each $a<t_+$.
 \vspace{1mm}
 \item[{\bf (ii)}] If $t_+<\infty$ then $ w\not\in L_p((0,t_+);{_\parallel H}^{s-2}_{q,\sigma}(\Omega))$.
\end{enumerate}
In particular, the solution exists globally if $w\in L_p((0,a);{_\parallel H}^{s-2}_{q,\sigma}(\Omega))$} for any finite 
number $a$ such that $a\leq t_+$.
\end{theorem}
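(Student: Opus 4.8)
The plan is to obtain this statement as a direct instance of the abstract Serrin-type result Theorem~\ref{thm5}, applied to the weak formulation \eqref{vorteq-w} and to the very weak formulation \eqref{vorteq-vw} of the vorticity equation \eqref{vorteq}. First I would record that both formulations are special cases of \eqref{3.1}: the very weak one with $A:=A^{\sf vw}=A_{-1}-B^{\sf vw}$ on $X_0^{\sf vw}={_\parallel H}^{-2}_{q,\sigma}(\Omega)$, $X_1^{\sf vw}={_\parallel L}_{q,\sigma}(\Omega)$ and bilinearity $G:=G^{\sf vw}$ with exponent $\beta^{\sf vw}=1/2+3/4q$; the weak one with $A:=A^{\sf w}=A^{\sf vw}_{1/2}$ on $X_0^{\sf w}={_\parallel H}^{-1}_{q,\sigma}(\Omega)$, $X_1^{\sf w}={\sf D}(A^{\sf w})$ and $G:=G^{\sf w}$ with $\beta^{\sf w}=(1+3/q)/4$. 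In both settings $X_0$ is of class UMD and the operator admits a bounded $\cH^\infty$-calculus of angle $0$ (see the previous two subsections), hence lies in $\mathcal{BIP}(X_0)$ with power angle $<\pi/2$, so the structural requirements of Section~2 hold. Moreover $\beta^{\sf vw}>1/2$ always, while $\beta^{\sf w}>1/2$ precisely when $q<3$, and the standing assumption $s=2/p+3/q\le 2$ (resp.\ $\le 3$ when $q<3$) is exactly $\mu^{\sf vw}_c=1/p+3/2q\le 1$ (resp.\ $\mu^{\sf w}_c=1/p+3/2q-1/2\le 1$). Hence, under the hypotheses, the solution $w$ on $[0,t_+)$ provided by Theorem~\ref{thm:vwvort} or by Theorem~\ref{thm:wvort} has an admissible critical weight, and Theorem~\ref{thm5} applies to it.

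Next I would identify the Serrin space. From
\[
2\mu^{\sf vw}_c-2=2\mu^{\sf w}_c-1=2/p+3/q-2=s-2\le 0
\]
together with the reiteration property of the interpolation-extrapolation scale of $A_0$ (Appendix), the complex interpolation space $X_{\mu_c}$ entering Theorem~\ref{thm5} equals ${_\parallel H}^{s-2}_{q,\sigma}(\Omega)$ in each of the two formulations; this is precisely the identity $X^{\sf w}_{\mu^{\sf w}_c}={_\parallel H}^{s-2}_{q,\sigma}(\Omega)=X^{\sf vw}_{\mu^{\sf vw}_c}$ recorded just before the statement, which is why the conclusion reads the same whichever formulation one uses. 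With this in hand, Theorem~\ref{thm5}\,{\bf(i)} gives $w\in L_p((0,a);{_\parallel H}^{s-2}_{q,\sigma}(\Omega))$ for every $a<t_+$, which is {\bf(i)}; Theorem~\ref{thm5}\,{\bf(ii)} gives $w\notin L_p((0,t_+);{_\parallel H}^{s-2}_{q,\sigma}(\Omega))$ when $t_+<\infty$, which is {\bf(ii)}; and the concluding ``in particular'' clause is then read off verbatim from the corresponding clause of Theorem~\ref{thm5}.

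The step that needs care -- and the one I would spend time on -- is the bookkeeping behind that space identification: that the abstract space $X_{\mu_c}$, built from the perturbed, scale-shifted operators ($A^{\sf vw}$ at scale level $-1$, $A^{\sf w}$ at level $-1/2$, incorporating the lower-order boundary operators $B^{\sf vw}$ and $L_1$), genuinely coincides with the concrete space ${_\parallel H}^{s-2}_{q,\sigma}(\Omega)$ of Section~4.4. This holds by standard interpolation theory because the complex interpolation space is governed by the endpoint spaces rather than the operator, because the lower-order perturbations $B^{\sf vw}$ and $L_1$ leave the interpolation-extrapolation scale unchanged, and because $s-2\le 0$ keeps one in the extrapolation part of the scale, where the space is defined by duality and carries no trace constraints. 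Granting these facts -- which are exactly the computations leading to $X^{\sf w}_{\mu^{\sf w}_c}=X^{\sf vw}_{\mu^{\sf vw}_c}$ above -- the theorem follows with no further work.
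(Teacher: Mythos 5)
Your proposal is correct and follows essentially the same route as the paper: the paper likewise obtains the result by applying the abstract Serrin criterion, Theorem~\ref{thm5}, to the weak and very weak formulations \eqref{vorteq-w} and \eqref{vorteq-vw} (whose operators admit a bounded $\cH^\infty$-calculus with angle $0$, so the hypotheses of Section~2 hold), and by computing $X^{\sf w}_{\mu^{\sf w}_c}={_\parallel H}^{2/p+3/q-2}_{q,\sigma}(\Omega)=X^{\sf vw}_{\mu^{\sf vw}_c}$, exactly your space identification. The only immaterial slip is your side remark that $s-2\le 0$ always: in the weak setting with $q<3$ one may have $s\in(2,3]$, but this does not affect the argument, which the paper states with even less detail than you provide.
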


\noindent
We emphasize the case $s=2$, i.e.\ $2/p+3/q=2$. Then ${_\parallel H}^{s-2}_{q,\sigma}(\Omega)={_\parallel L}_{q,\sigma}(\Omega)$. So we have e.g.\ global existence if $w$ stays bounded in $L_2(J;L_3(\Omega)^3)$ or in $L_4(J;L_2(\Omega)^3)$.

\subsection{Small Data}

In case $p>2, q\geq 3$ we may continue the very weak solution instantly to a weak solution. In fact, for $q\geq 3$ we have the estimate
\begin{equation*}
|G^{\sf w}(w_1,w_2)|_{{_\parallel H}^{-1}_{q,\sigma}} \leq |u_1|_{L_\infty}|u_2|_{H^1_q}
\le C |u_1|_{H^{2\beta}_q} |u_2|_{H^{2\beta}_q} \leq C |w_1|_{H^{2\beta-1}_q} |w_2|_{H^{2\beta-1}_q},
\end{equation*}
for any $2\beta>1$. This shows that any $\mu>1/p$ is admissible, we are in the subcritical case. For sufficiently small $\mu>1/p$ we have 
$$ X_\gamma^{\sf vw}={_\parallel B}^{-2/p}_{qp,\sigma}(\Omega)\hookrightarrow {_\parallel B}^{2(\mu-1/p)-1}_{qp,\sigma}(\Omega)=X^{\sf w}_{\gamma,\mu},$$
hence we obtain the following result.

\begin{theorem}
Let $q\in[3,\infty)$ and $p\in(2,\infty)$.\\
Then, for each  $w_0\in {_\parallel B}^{3/q-2}_{qp,\sigma}(\Omega)$, the vorticity equation \eqref{vorteq}
admits a unique weak solution
$$ u \in H^1_{p,loc}((0,t_+); {_\parallel H}^{-1}_{q,\sigma}(\Omega)))\cap L_{p,loc}((0,t_+); {H}^1_q(\Omega)^3).$$
on a maximal time interval $(0,t_+)$.
\end{theorem}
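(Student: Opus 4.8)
The strategy is to use that for $q\ge 3$ and $p>2$ the datum $w_0$ already sits in the critical space of the \emph{very weak} formulation, solve \eqref{vorteq-vw} by Theorem~\ref{thm:vwvort}, and then bootstrap the resulting solution to a weak solution by restarting it at a positive time, exploiting the much stronger quadratic estimate available in the weak setting when $q\ge 3$. Concretely, since $p>2$ and $q\ge 3$ we have $2/p+3/q<2$, so Theorem~\ref{thm:vwvort} applies and yields a unique very weak solution $w$ on a maximal interval $(0,t_+)$; the strict inequality $2/p+3/q<2$ gives instant regularization, so in particular
$$w\in C\big((0,t_+);{_\parallel B}^{2(1-1/p)-2}_{qp,\sigma}(\Omega)\big)=C\big((0,t_+);X_\gamma^{\sf vw}\big),$$
because $2(1-1/p)-2=-2/p$ and $X_\gamma^{\sf vw}=(X_0^{\sf vw},X_1^{\sf vw})_{1-1/p,p}={_\parallel B}^{-2/p}_{qp,\sigma}(\Omega)$.

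Next I would record the improved bound for the weak nonlinearity. Writing $u_i=L_0w_i$, an operator of order $-1$, and invoking the Sobolev embedding $H^{2\beta}_q(\Omega)\hookrightarrow L_\infty(\Omega)$, valid whenever $2\beta>3/q$ and hence for every $2\beta>1$ since $q\ge 3$, one obtains
$$|G^{\sf w}(w_1,w_2)|_{{_\parallel H}^{-1}_{q,\sigma}(\Omega)}\le |u_1|_{L_\infty(\Omega)}\,|u_2|_{H^1_q(\Omega)}\le C\,|w_1|_{X_\beta^{\sf w}}\,|w_2|_{X_\beta^{\sf w}},\qquad X_\beta^{\sf w}={_\parallel H}^{2\beta-1}_{q,\sigma}(\Omega),$$
for every $\beta\in(1/2,1)$. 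Now fix $\mu\in(1/p,1/2]$ (nonempty because $p>2$) and choose $\beta\in(1/2,1)$ with $2\beta-1<\mu-1/p$, which is possible since $\mu-1/p>0$. Then $\beta\in(\mu-1/p,1)$ and condition \eqref{3.2} holds with strict inequality; moreover $X_0^{\sf w}={_\parallel H}^{-1}_{q,\sigma}(\Omega)$ is of class UMD and $A^{\sf w}\in\mathcal{BIP}(X_0^{\sf w})$ has power angle $0<\pi/2$. Hence Theorem~\ref{thm1} is applicable to \eqref{vorteq-w} in the subcritical space $X_{\gamma,\mu}^{\sf w}={_\parallel B}^{2(\mu-1/p)-1}_{qp,\sigma}(\Omega)$, and the condition $\mu\le 1/2$ gives $-2/p\ge 2(\mu-1/p)-1$, i.e.\ the embedding $X_\gamma^{\sf vw}={_\parallel B}^{-2/p}_{qp,\sigma}(\Omega)\hookrightarrow X_{\gamma,\mu}^{\sf w}$.

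Finally, fix an arbitrary $t_0\in(0,t_+)$. By the first step $w(t_0)\in X_\gamma^{\sf vw}\hookrightarrow X_{\gamma,\mu}^{\sf w}$, so Theorem~\ref{thm1} produces a weak solution of \eqref{vorteq-w} on some interval $[t_0,t_0+a)$ which, as $\mu<1$, regularizes instantly and therefore lies in $H^1_{p,loc}((t_0,t_0+a);X_0^{\sf w})\cap L_{p,loc}((t_0,t_0+a);X_1^{\sf w})$. Because $A^{\sf w}=A^{\sf vw}_{1/2}$ and the convective terms of the two formulations agree after integration by parts, this weak solution is in particular a very weak solution in the sense of Theorem~\ref{thm:vwvort}, hence by uniqueness there it coincides with $w$ on its interval of definition. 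Since $t_0\in(0,t_+)$ was arbitrary, a finite covering of each compact subinterval of $(0,t_+)$ gives $w\in H^1_{p,loc}((0,t_+);{_\parallel H}^{-1}_{q,\sigma}(\Omega))\cap L_{p,loc}((0,t_+);H^1_q(\Omega)^3)$, using $X_1^{\sf w}={_\parallel H}^1_{q,\sigma}(\Omega)\hookrightarrow H^1_q(\Omega)^3$. Uniqueness of the weak solution follows again from uniqueness of very weak solutions, and $t_+$ is maximal for the weak solution as well, since a proper weak continuation past $t_+$ would be a very weak continuation, contradicting Theorem~\ref{thm:vwvort}.

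The main obstacle is the identification in the last step: showing that the weak solution obtained by restarting at $t_0$ is genuinely (a restriction of) $w$. This rests on the compatibility $A^{\sf w}=A^{\sf vw}_{1/2}$ inside the interpolation--extrapolation scale together with the consistency of the two nonlinearities, namely $(u\cdot\nabla u\,|\,{\rm rot}\,\phi)_\Omega=(u\otimes u\,|\,\nabla{\rm rot}\,\phi)_\Omega$ for $\phi\in{_\parallel H}^2_{q^\prime,\sigma}(\Omega)$, with all boundary contributions cancelling thanks to $\phi_\parallel=0$, ${\rm div}\,u=0$ and $u\cdot\nu=0$; once this is in place, uniqueness of very weak solutions closes the argument.
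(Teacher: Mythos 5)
Your proposal is correct and follows essentially the same route as the paper: the improved estimate $|G^{\sf w}(w_1,w_2)|_{{_\parallel H}^{-1}_{q,\sigma}}\le |u_1|_{L_\infty}|u_2|_{H^1_q}\le C|w_1|_{H^{2\beta-1}_q}|w_2|_{H^{2\beta-1}_q}$ valid for any $2\beta>1$ when $q\ge 3$, making the weak formulation subcritical, together with the embedding ${_\parallel B}^{-2/p}_{qp,\sigma}(\Omega)\hookrightarrow {_\parallel B}^{2(\mu-1/p)-1}_{qp,\sigma}(\Omega)$ for $\mu\le 1/2$ (hence $p>2$), so that the instantly regularized very weak solution can be restarted as a weak solution at any $t_0>0$. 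Your additional care with the identification step (the restarted weak solution is a very weak solution, so uniqueness in the very weak class forces coincidence with $w$) is a correct elaboration of what the paper leaves implicit.
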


\noindent
Now we want to consider data which are small in the critical spaces ${_\parallel B}^{3/q-2}_{qp,\sigma}$ where $p,q\in (1,\infty)$, with $2/p+3/q<2$. To apply Corollary
\ref{cor2} in Section 2, we need to study the spectrum of the weak operator $A^{\sf w}$, which consists only of eigenvalues and by elliptic regularity is independent of $q$. So it is sufficient to consider the case $q=2$.

Suppose $\lambda\in \CC$ is an eigenvalue of $A^{\sf w}$ with eigenfunction $w\neq 0$. Then setting $\phi=v$,  with $v$ being the stream function, 
in the definition of the operator $A^{\sf w}$ we obtain
$$\lambda (w|v)_\Omega = \langle A^{\sf w}w|v\rangle,\quad w_\parallel= \nu\times {\sf B}_\Sigma u.$$
Integrating by parts on the left hand side, with $u={\rm rot}\, v$, $w={\rm rot }\,u$, we obtain after some calculations involving the boundary conditions as well as ${\rm div}\, w ={\rm div}\, u={\rm div}\, v =0$ the identity
$$ \lambda|u|_\Omega^2 = 2|D(u)|_\Omega^2 + \alpha |u|_\Sigma^2. $$
In order to verify the assertion on the right hand side, we first use a partial integration to obtain
 $\langle A^{\sf w}w|v\rangle= -(\Delta w|v)_\Omega$ (assuming for the moment that all functions be sufficiently smooth).
 Employing  $-\Delta w={\rm rot}\,{\rm rot}\,w$ (as ${\rm div}\,w=0$) and the relationships between $u,v,w$,
an  integration by parts yields  $(\Delta w|v)_\Omega=(\Delta u|u)_\Omega.$ 
 Yet another integration by parts in conjunction with the Navier boundary conditions and the relation
 $\Delta u = 2\,{\rm div}D(u)$  (as ${\rm div}\,u=0$) implies the assertion.

This shows that the eigenvalues of $A^{\sf w}$ are real and nonnegative. In addition, if $\alpha>0$ then Korn's inequality shows that $0$ is not an eigenvalue. Therefore, the analytic $C_0$-semigroup generated by $A^{\sf w}$ is exponentially stable. Corollary \ref{cor2} then yields the following result.

\begin{theorem}
\label{thm:ex-stable}
Suppose $p,q\in (1,\infty)$ such that $2/p+3/q<2$ and $p\ge2$.\\ 
Then the trivial solution of the vorticity equation \eqref{vorteq} is globally exponentially stable in
$X_{\gamma}^{\sf w} \subset B^{1-2/p}_{qp}(\Omega)^3.$  
Moreover, there is $r_0>0$ such that every very weak solution $w$ with initial value $w_0\in {_\parallel B}^{3/q-2}_{qp,\sigma}(\Omega)$ with norm $|w_0|_{{_\parallel B}^{3/q-2}_{qp,\sigma}}\leq r_0$ exists globally and converges exponentially to zero in the norm of
$B^{1-2/p}_{qp}(\Omega)^3$.
\end{theorem}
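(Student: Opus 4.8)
The plan is to derive the theorem from Corollary~\ref{cor2}\,(iii), applied to the weak formulation \eqref{vorteq-w} of the vorticity equation, whose semilinear bilinear structure of type \eqref{3.1} has already been established above. Recall that $X_0^{\sf w}={_\parallel H}^{-1}_{q,\sigma}(\Omega)$ is of class UMD, that $X_1^{\sf w}$ embeds densely in it, that $A^{\sf w}$ admits a bounded $\cH^\infty$-calculus with angle $0$ (so in particular $A^{\sf w}\in\mathcal{BIP}(X_0^{\sf w})$ with power angle $<\pi/2$), and that $G^{\sf w}\colon X_{\beta^{\sf w}}^{\sf w}\times X_{\beta^{\sf w}}^{\sf w}\to X_0^{\sf w}$ is bounded and bilinear with $\beta^{\sf w}=(1+3/q)/4<1$ when $q<3$. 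Since $2/p+3/q<2<3$, the critical weight $\mu_c^{\sf w}=1/p+3/2q-1/2$ obeys $1/p<\mu_c^{\sf w}\le1$, so Theorem~\ref{thm1} applies in the critical case, with critical space $X_{\gamma,\mu_c^{\sf w}}^{\sf w}={_\parallel B}^{3/q-2}_{qp,\sigma}(\Omega)$.

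Two further facts are needed for Corollary~\ref{cor2}\,(iii). The smoothness condition $1/p<1-\beta^{\sf w}$, i.e.\ $4/p+3/q<3$, is exactly where $p\ge2$ enters: then $2/p\le1$, so $4/p+3/q=(2/p+3/q)+2/p<2+1=3$. The decisive hypothesis is $0\in\rho(A^{\sf w})$, for which I use the spectral analysis carried out above: testing the eigenvalue relation for $A^{\sf w}$ against the stream function and integrating by parts yields $\lambda|u|_\Omega^2=2|D(u)|_\Omega^2+\alpha|u|_\Sigma^2$ for any eigenvalue $\lambda$ with eigenfunction $w\ne0$ and velocity $u=L_0w$, so every eigenvalue is real and nonnegative and, for $\alpha>0$, Korn's inequality excludes $\lambda=0$. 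Since $A^{\sf w}$ has compact resolvent, its spectrum consists only of these eigenvalues, whence $0\in\rho(A^{\sf w})$; together with sectoriality of angle $<\pi/2$ this confines $\sigma(A^{\sf w})$ to a half-plane $\{\,\Re z\ge\delta\,\}$ with $\delta>0$, so $e^{-A^{\sf w}t}$ is exponentially stable.

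Corollary~\ref{cor2}\,(iii) then gives exponential stability of the trivial solution in $X_\gamma^{\sf w}$ and an $r_0>0$ such that every solution with $|w_0|_{X_{\gamma,\mu_c^{\sf w}}^{\sf w}}\le r_0$ exists globally and decays exponentially in $X_\gamma^{\sf w}$. It remains to identify the spaces: $X_{\gamma,\mu_c^{\sf w}}^{\sf w}={_\parallel B}^{3/q-2}_{qp,\sigma}(\Omega)$ as above, and $X_\gamma^{\sf w}\hookrightarrow B^{1-2/p}_{qp}(\Omega)^3$ as already used in Theorem~\ref{thm:wvort}. Since, for $3/2<q<3$, the weak and very weak solutions coincide on their common critical data space by uniqueness, this settles that range. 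For $q\ge3$, where the weak bilinear bound degenerates, one starts instead from the very weak solution of Theorem~\ref{thm:vwvort}, which for $w_0\in{_\parallel B}^{3/q-2}_{qp,\sigma}(\Omega)$ regularizes instantly (by $2/p+3/q<2$); writing $u\cdot\nabla u={\rm div}(u\otimes u)$ provides a subcritical bound for $G^{\sf w}$ in the weak scale, and the regularized state lies in the very weak state space, which embeds into $X_{\gamma,\mu}^{\sf w}$ for $\mu$ near $1/p$, so for $t>0$ the solution continues as a weak solution; Corollary~\ref{cor2}\,(iii) applies once more (with a subcritical pair $(\beta,\mu)$, $\beta$ small, so $1/p<1-\beta$), giving exponential decay in $X_\gamma^{\sf w}\hookrightarrow B^{1-2/p}_{qp}(\Omega)^3$.

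The main obstacle is establishing $0\in\rho(A^{\sf w})$ and the exponential stability of $e^{-A^{\sf w}t}$: this relies on the eigenvalue identity, on Korn's inequality (hence on $\alpha>0$), and on the $q$-independence of the spectrum of $A^{\sf w}$, which lets the computation be done in the Hilbert space $L_2(\Omega)^3$. Everything else --- the hypotheses of Corollary~\ref{cor2}, the interpolation identifications, and the reconciliation of the \emph{very weak} solution in the statement with the weak state space for $q\ge3$ --- is routine.
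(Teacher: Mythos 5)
Your proposal is correct and follows essentially the same route as the paper: the eigenvalue identity $\lambda|u|_\Omega^2=2|D(u)|_\Omega^2+\alpha|u|_\Sigma^2$ obtained by testing with the stream function, Korn's inequality (for $\alpha>0$) and the $q$-independence of the spectrum to get $0\in\rho(A^{\sf w})$ and exponential stability of the semigroup, followed by Corollary~\ref{cor2}\,(iii) in the weak setting, with $p\ge2$ ensuring $1/p<1-\beta^{\sf w}$, and the range $q\ge3$ handled by first regularizing the very weak solution as in Subsection~4.7. The only tiny imprecision is the parenthetical ``$\beta$ small'' in the $q\ge3$ case: the subcritical estimate for $G^{\sf w}$ there needs $\beta$ just above $1/2$, which is harmless since $2/p+3/q<2$ with $q\ge3$ forces $p>2$, so $1/p<1-\beta$ still holds.
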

\noindent
The Navier boundary conditions considered in  \eqref{NS} were first introduced by Navier in \cite{Na27} and later derived by Maxwell in \cite{Ma79} 
from the kinetic theory of gases. Problem~\eqref{NS}  has been considered by several authors,  
see for instance \cite{PrWi17a} and the references therein.
The construction of a stream function in Subsection 4.2, based on solvability of the elliptic problem~\eqref{LP},  seems to be new. 
Moreover, to the best of our knowledge, the results contained 
in Theorems~\ref{thm:vwvort}-\ref{thm:ex-stable} are new.

 We refer to \cite{Za05} for results concerning the vorticity equations 
 (corresponding to the Naveier-Stokes system with slip boundary conditions) in a cylindrical domain,
 and to  \cite{GiMi89} concerning the vorticity equations in $\R^3$.

\section{Further Applications}
In this section, we apply our theory to a variety of well-known problems.
Applying Theorem \ref{thm1}, as well as Corollaries \ref{cor2} and \ref{cor1},
we obtain results in critical spaces which seem to be new in the case of bounded domains.
We would like to emphasize that our proofs are rather simple, as they do not involve the microscopic structure of Besov spaces.
Corresponding results in the literature seem restricted to the case of $\R^d$, 
where techniques from harmonic analysis are applied.

\subsection{Convection-Diffusion}
Let $\Omega\subset\RR^d$ be a bounded domain of class $C^{4}$
and consider the following non-local convection-diffusion problem.
\begin{equation}
\begin{aligned}
\label{C-D}
\partial_t u -\Delta u &=-{\rm div}(u\nabla w) &&\mbox{in} \;\; \Omega,\\
-\Delta w &= \pm u &&\mbox{in} \;\; \Omega,\\
\partial_\nu u =\partial_\nu w &=0 && \mbox{on} \;\; \partial\Omega,\\
u(0)&=u_0 && \mbox{in} \;\; \Omega.
\end{aligned}
\end{equation}
Here $u$ means a scalar variable, such as a density or a concentration, and $w$ a potential.
Observe that the mean value of $u$ vanishes identically if that of $u_0$ does. 
We assume this throughout. Then $w$ is uniquely defined with mean zero. Without loss of generality, we may assume $\Delta w=u$. If not, we replace $u$ by $-u$. 
Note that this system is (locally) scaling invariant w.r.t\ the scaling
$$\big(u_\lambda,w_\lambda \big)(t,x):=\big(\lambda^2 u,w\big)(\lambda^2t,\lambda x).$$
For $q\in (1,\infty)$, define 
$$X_0:=L_{q,(0)}(\Omega):=\{u\in L_q(\Omega):\int_\Omega u\, dx=0\}$$
and an operator $A:X_1\to X_0$ by $Au=-\Delta u$ with domain 
$$X_1=\{u\in \,H_q^2(\Omega)\cap L_{q,(0)}(\Omega):\partial_\nu u=0\ \text{on}\ \partial\Omega\}.$$
By \cite{DDHPV04}, it holds that $A\in \mathcal{H}^\infty(X_0)$ with angle $\phi_{A}^\infty=0$. 
Moreover, $X_\beta=(X_0,X_1)_\beta={_\nu H_{q,(0)}^{2\beta}(\Omega)}$, $\beta\in [0,1]$, where
\begin{equation*}
{_\nu H_{q,(0)}^{2\beta}(\Omega)}:=L_{q,(0)}(\Omega)\cap
\left\{
\begin{aligned}
& \{u\in H_q^{2\beta}(\Omega):\partial_\nu u=0\ \text{on}\ \partial\Omega\} && \text{for} && 2\beta\in (1+1/q,2],\\
& H_q^{2\beta}(\Omega) &&\text{for} && 2\beta\in [0,1+1/q).
\end{aligned}
\right.
\end{equation*}
For $s\in (1,\infty)$ and $\tau\in [0,2]$, we denote by 
$S: H_{s,(0)}^{\tau}(\Omega)\to H_s^{\tau+2}(\Omega)$ the linear solution map $u\mapsto w$ for the elliptic problem
$$
\begin{aligned}
\Delta w&=u&&\mbox{in} \;\; \Omega,\\
\partial_\nu w&=0&&\mbox{on} \;\; \partial\Omega,
\end{aligned}
$$
which is well-defined thanks to \cite[Theorem 5.5.1]{Tri78} and the fact that $u$ has mean value zero. 
We note on the go that there exists a constant $C>0$ such that for all $u\in H_{s,(0)}^\tau(\Omega)$
$$|Su|_{H_s^{\tau+2}(\Omega)}\le C|u|_{H_s^\tau(\Omega)}.$$
With the operator $S$ at hand, we may reduce \eqref{C-D} to the single equation
\begin{equation}\label{eq:C-D-u}
\partial_t u+Au=F(u),\ t>0,\quad u(0)=u_0,
\end{equation}
where $F(u)=G(u,u)$ and (since $\Delta Sv=v$)
$$G(u,v)= -{\rm div}(u\nabla Sv)= -\nabla u\cdot\nabla Sv - u\Delta Sv= -\nabla u\cdot\nabla Sv - uv$$
is bilinear in $(u,v)\in X_\beta\times X_\beta$. For $u\in X_\beta$ and by H\"older's inequality, we obtain 
$$|F(u)|_{L_q}\le |u|_{H_{qr}^1}|Su|_{H_{qr'}^1}+|u|_{L_{2q}}^2.$$
Choose $\beta\in (0,1)$ such that
$$H_q^{2\beta}(\Omega)\hookrightarrow H_{qr}^1(\Omega),\ H_q^{2\beta+2}(\Omega)\hookrightarrow H_{qr'}^1(\Omega)\quad\text{and}\quad H_{q}^{2\beta}(\Omega)\hookrightarrow L_{2q}(\Omega).$$
The first two embeddings hold if
$$2\beta-\frac{d}{q}=1-\frac{d}{qr}\quad\text{and}\quad 2\beta+1-\frac{d}{q}=-\frac{d}{qr'}.$$
It turns out that this can always be achieved if $q\in (1,d/2)$, hence $d\ge 3$ is necessary. The number $\beta$ can then be computed to the result $\beta=d/4q$.
In particular, the restriction $\beta<1$ is satisfied if $q\in (d/4,d/2)$. Note that for the above value of $\beta$, we have $H_q^{2\beta}(\Omega)\hookrightarrow L_{2q}(\Omega)$.

It is now easy to see that the estimate
$$|F(u)-F(\bar{u})|_{X_0}\le C(|u|_{X_\beta}+|\bar{u}|_{X_\beta})|u-\bar{u}|_{X_\beta}$$
holds for some constant $C>0$ and all $u,\bar{u}\in X_\beta$, hence \textbf{(H2)} is satisfied with $m=\rho=1$ and $\beta_1=\beta$. Thus, the critical weight $\mu_c$ is given by
$$\mu_c=\frac{1}{p}+\frac{d}{2q}-1,$$
which results from \textbf{(H3)}.

 It holds that $\mu_c>1/p$ if $q\in (1,d/2)$ and $\mu_c\le 1$ if $1/p+d/2q\le 2$.
The critical  space reads
$$X_{\gamma,\mu_c}=(X_0,X_1)_{\mu_c-1/p,p}={_\nu}B_{qp,(0)}^{2(\mu_c-1/p)}(\Omega)={_\nu}B_{qp,(0)}^{d/q-2}(\Omega),$$
where
\begin{equation*}
\label{thm:C-D}
{_\nu B}_{qp,(0)}^{r}(\Omega):=L_{q,(0)}(\Omega)\cap 
\left\{
\begin{aligned}
& \{u\in B_{qp}^{r}(\Omega):\partial_\nu u=0\} &&\text{for} && r\in (1+1/q,2],\\
& B_{qp}^{r}(\Omega) &&\text{for} && r\in (0,1+1/q).\\
\end{aligned}
\right.
\end{equation*}
Here we assume that $d\ge 3$, $p\in (1,\infty)$ $q\in (d/4,d/2)$ and $2/p+d/q\le 4$. 

\medskip
Choosing $X_0=H_q^{-1}(\Omega):=(H_{q'}^1(\Omega))'$ as a base space in the weak setting, one obtains
$$\mu_c^{\sf w}=\frac{1}{p}+\frac{d}{2q}-\frac{1}{2}$$
as the critical weight, hence $q<d$, by the condition $\mu_c^{\sf w}>1/p$. Furthermore, $\mu_c^{\sf w}\le 1$ if and only if
$$\frac{1}{p}+\frac{d}{2q}\le \frac{3}{2},$$
hence, in particular, $q>d/3$. This shows that we may consider space dimensions $d\ge 2$ in the weak setting.


\subsection{Electro-Chemistry}\label{subsec:EC}
Let $\Omega\subset\RR^d$ be a bounded domain of class $C^{3}$
and consider the following  problem of Nernst-Planck-Poisson type.
\begin{equation}
\begin{aligned}
\label{NPP}
\partial_t u -\upmu_u\,\Delta u &={\rm div}(u\nabla w) && \mbox{in} \;\; \Omega,\\
\partial_t v -\upmu_v\,\Delta v &=-{\rm div}(v\nabla w) && \mbox{in} \;\; \Omega,\\
\partial_t w-\Delta w &=  u-v && \mbox{in} \;\; \Omega,\\
\partial_\nu u =\partial_\nu v=\partial_\nu w &=0 && \mbox{on} \;\; \partial\Omega,\\
u(0)=u_0,\; v(0)&=v_0 && \mbox{in} \;\; \Omega. 
\end{aligned}
\end{equation}
The variables $u$ and $v$ denote concentrations of oppositely charged ions, and $w$ the induced electrical potential.
Here $ \upmu_u,\upmu_v>0$ are assumed to be constant. In the following, we set $ \upmu_u=\upmu_v=1$. 
Note that this system is scaling invariant w.r.t\ the scaling
$$\big(u_\lambda,v_\lambda, w_\lambda\big)(t,x) :=\big(\lambda^2 u,\lambda^2 v,w\big)(\lambda^2t,\lambda x).$$
Let $B_N=-\Delta$ in $L_q(\Omega)$, $1<q<\infty$, with domain
$${\sf D}(B_N)=\{u\in H_q^2(\Omega):\partial_\nu u=0\ \text{on}\ \Omega\}.$$
It is well-known that for each $\omega>0$, $\omega+B_N\in \mathcal{H}^\infty(L_q(\Omega))$ with angle $\phi_{B_N}^\infty=0$, see e.g.\ \cite{DDHPV04}. The pair $(Y_0,B_0)=(L_q(\Omega),B)$ generates the complex extrapolation-interpolation scale $(Y_\alpha,B_\alpha)$, $\alpha\in\R$, see the Appendix. Consider the operator 
$$B_N^{\sf w}:=B_{-1/2}:H_q^1(\Omega)\to H_q^{-1}(\Omega):=(H_{q'}^1(\Omega))',$$
which has the explicit representation
$$\langle B_N^{\sf w}u|\phi\rangle=(\nabla u|\nabla\phi)_{L_2(\Omega)}$$
for all $(u,\phi)\in H_q^1(\Omega)\times H_{q'}^1(\Omega)$.
Then $B_{N}^{\sf w}\in\mathcal{H}^\infty(H_q^{-1}(\Omega))$ with the same angle as $B_N$.

As a base space for the system variable $z=(u,v,w)$ we take 
\begin{align*}
X_0  &= H^{-1}_q(\Omega)\times  H^{-1}_q(\Omega)\times H^1_q(\Omega)
\end{align*}
and we define $Az:= {\rm diag }\big(B_{N}^{\sf w}, B_{N}^{\sf w},B_{N}|_{H_q^3}\big)z+(0,0,v-u)$, with domain
$$ X_1:={\sf D}(A) = \{ z=(u,v,w)\in H^1_q(\Omega)^2\times H^3_q(\Omega):\; \partial_\nu w=0 \mbox{ on } \partial\Omega\}.$$
By the triangular structure of $A$, it follows readily that $A\in\mathcal{H}^\infty(X_0)$ with $\phi^{\infty}_A=0$.
For the complex interpolation spaces $X_\beta=(X_0,X_1)_\beta$, $\beta\in (0,1)$, we obtain
$$ X_\beta = \{ z=(u,v,w)\in H^{2\beta-1}_q(\Omega)^2\times H^{2\beta+1}_q(\Omega):\; \partial_\nu w=0 \mbox{ on } \partial\Omega\},$$
if $2\beta>1/q$ and
$$X_\beta=H^{2\beta-1}_q(\Omega)^2\times H^{2\beta+1}_q(\Omega)$$
if $2\beta<1/q$,
where $H_q^{-r}(\Omega):=(H_{q'}^r(\Omega))'$ for $r\in [0,1]$.
Define $F=(F_1,F_2,F_3):X_\beta\to X_0$ by
$$ \langle (F_1(z),F_2(z))|(\phi_1,\phi_2)\rangle := \left(-(u\nabla w|\nabla\phi_1)_{L_2},(v\nabla w|\nabla\phi_2)_{L_2}\right),$$
for all $(\phi_1,\phi_2)\in H_{q'}^1(\Omega)^2$ and $F_3(z):=0$. It follows that
$$|F(z)|_{X_0}\le C (|u|_{L_{qr}(\Omega)}+|v|_{L_{qr}(\Omega)})|w|_{H_{qr'}^1(\Omega)},$$
hence there exists a positive number $C$ such that
$$|F(z_1)-F(z_2)|_{X_0}\le C(|z_1|_{X_\beta}+|z_2|_{X_\beta})|z_1-z_2|_{X_\beta},\quad z_1,z_2\in X_\beta,$$
provided $\beta = \frac{1}{4}( 1+\frac{d}{q})$, $q\in (d/3,d)$, which yields the critical space
$$ X_{\gamma,\mu_c} = B^{d/q-2}_{qp}(\Omega)^2\times 
{_\nu}B^{d/q}_{qp}(\Omega)
,$$
where
$${_\nu B}_{qp}^{r}(\Omega):=
\left\{
\begin{aligned}
& \{w\in B_{qp}^{r}(\Omega):\partial_\nu w=0\} &&\text{for} && r\in (1+1/q,3),\\
& B_{qp}^{r}(\Omega) &&\text{for} && r\in (0,1+1/q)\\
\end{aligned}\right.
$$
and $B_{qp}^{-s}(\Omega):=\left(B_{q'p'}^s(\Omega)\right)'$ for $s\in (0,1)$.
Here we assume $ 2/p+d/q\le 3$, to make sure that $\mu_c\le1$. 

\medskip
\noindent
Critical Besov spaces for  system~\eqref{NPP} in the case  $\Omega=\R^d$
and with the parabolic equation  $\partial_t w-\Delta w =  u-v$ is replaced by the corresponding elliptic problem
have been studied in \cite{Zhao17}.

\subsection{Chemotaxis  equations}
Let $\Omega\subset\RR^d$ be a bounded domain of class $C^{3}$.
Then we consider the system
\begin{equation}
\begin{aligned}
\label{CNS}
\partial_t u + u\cdot\nabla u-\upmu_u\, \Delta u +\nabla\pi &=0 && \mbox{in} \;\; \Omega,\\
                                        {\rm div}\,u   &=0 && \mbox{in} \;\; \Omega, \\
\partial_t v + u\cdot\nabla v- \upmu_v\,\Delta v &= -{\rm div}(v\nabla w)  && \mbox{in} \;\; \Omega,\\
\partial_t w + u\cdot\nabla w  -\upmu_w\,\Delta w &=-v && \mbox{in} \;\; \Omega,\\
u=0,\; \partial_\nu v=\partial_\nu w &=0 && \mbox{on} \;\; \partial\Omega,\\
u(0)=u_0,\; v(0)=v_0,\; w(0) &= w_0 && \mbox{in} \;\; \Omega. 
\end{aligned}
\end{equation}
Here $u$ is the velocity field, $w$ denotes the cell density, and $v$ a chemical potential. For simplicity, we choose all constants 
$\upmu_j$, $j\in\{u,w,v\}$, equal to one, as this does not affect the analysis.
Note that the system is (locally) scaling invariant w.r.t. the scaling
$$
\big(u_\lambda ,\pi_\lambda, v_\lambda ,w_\lambda\big)(t,x) :=\big(\lambda u,\lambda^2\pi,\lambda^2 v, w\big)(\lambda^2 t,\lambda x).
$$
Denote by $\mathbb{P}$ the Helmholtz projection in $L_q$ and let $L_{q,\sigma}(\Omega):=\mathbb{P}L_q(\Omega)^d$. 
We choose $X_0=L_{q,\sigma}(\Omega)\times H_q^{-1}(\Omega)\times H_q^1(\Omega)$ as a base space for $z=(u,v,w)$,  where $H_q^{-1}(\Omega):=(H_{q'}^1(\Omega))'$. 
Define a linear operator $A:X_1\to X_0$ by
$$Az={\rm diag }\left(B_S, B_N^{\sf w},B_{N}\right)z-(0,0,v)$$
with domain
$$X_1=\{z=(u,v,w)\in H_{q,\sigma}^2(\Omega)\times H_q^1(\Omega)\times H_q^3(\Omega):u=0,\;\partial_\nu w=0\ \text{on}\ \partial\Omega\}.$$
The operators $B_{N}^{\sf w}$, $B_N$ are defined as in Subsection \ref{subsec:EC} and
$B_Su:=-\mathbb{P}\Delta u.$ 
Furthermore, $A\in \mathcal{H}^\infty(X_0)$ with angle $\phi_A^\infty=0$. The complex interpolation spaces $X_\beta=(X_0,X_1)_\beta$, $\beta\in (0,1)$, then read
$$X_\beta = \{(u,v,w)\in H^{2\beta}_{q,\sigma}(\Omega)\times H^{2\beta-1}_q(\Omega)\times H^{2\beta+1}_q(\Omega):\; u=0,\,\partial_\nu w=0 \mbox{ on } \partial\Omega\}$$
if $2\beta\in (1/q,2]$ and
$$X_\beta =H^{2\beta}_{q,\sigma}(\Omega)\times H^{2\beta-1}_q(\Omega)\times H^{2\beta+1}_q(\Omega)$$
if $2\beta\in [0,1/q)$, where $H_{q,\sigma}^r(\Omega):=H_q^r(\Omega)^d\cap L_{q,\sigma}(\Omega)$ and $H_q^{-s}(\Omega):=(H_{q'}^s(\Omega))'$ for $r\in[0,2]$ and $s\in [0,1]$. 

For $\beta\ge 1/2$, we define $F=(F_1,F_2,F_3):X_\beta\to X_0$ by $F_1(z):=-\mathbb{P}(u\cdot\nabla u)$
$$\langle F_2(z)|\phi\rangle:=(v(u+\nabla w)|\nabla\phi)_{L_2(\Omega)},\quad\phi\in H_{q'}^1(\Omega),$$
and $F_3(z):=-u\cdot \nabla w$. By similar arguments as in Subsection \ref{subsec:EC}, it follows that
there is a constant $C>0$ such that
$$|F(z_1)-F(z_2)|_{X_0}\le C(|z_1|_{X_\beta}+|z_2|_{X_\beta})|z_1-z_2|_{X_\beta},\quad z_1,z_2\in X_\beta,$$
provided $\beta=\frac{1}{4}(1+\frac{d}{q})$ and $q\in (d/3,d)$. This in turn yields the critical space
$$ X_{\gamma,\mu_c} = {_0B}^{d/q-1}_{qp,\sigma}(\Omega)\times B^{d/q-2}_{qp}(\Omega) \times
{_\nu}B^{d/q}_{qp}(\Omega)
,$$
where  $B_{qp}^{-r}(\Omega):=\left(B_{q'p'}^r(\Omega)\right)'$ for $r\in (0,1)$,
\begin{equation*}
{_0 B}_{qp,\sigma}^{r}(\Omega):=L_{q,\sigma}\cap
\left\{
\begin{aligned}
& \{u\in B_{qp}^{r}(\Omega):u=0\} &&\text{for} && r\in (1/q,2),\\
& B_{qp}^{r}(\Omega) &&\text{for} && r\in (0,1/q),\\
\end{aligned}\right.
\end{equation*}
and
\begin{equation*}
{_\nu B}_{qp}^{r}(\Omega):=
\left\{
\begin{aligned}
& \{w\in B_{qp}^{r}(\Omega):\partial_\nu w=0\} &&\text{for} && r\in (1+1/q,3),\\
& B_{qp}^{r}(\Omega) &&\text{for} && r\in (0,1+1/q).\\
\end{aligned}\right.
\end{equation*}

\subsection{Magneto-Hydrodynamics}
In this last subsection, we consider the equations of magneto-hydrodynamics which read
\begin{equation}
\label{MHD1}
\begin{aligned}
\varrho (\partial_t +u \cdot\nabla) u -\upnu\, \Delta u +\nabla \pi &= \frac{1}{\upmu_0} {\rm rot}\, B \times B && \mbox{in}\;\; \Omega,\\
\varrho (\partial_t +u\cdot\nabla) B -\frac{1}{\upmu_0\,\sigma} \Delta B &= B\cdot\nabla u && \mbox{in}\;\; \Omega,\\[2pt]
{\rm div}\, u = {\rm div}\,B &=0 && \mbox{in}\;\; \Omega,\\
u=0,\quad B\cdot\nu =0,\quad \nu\times {\rm rot}\, B &=0  && \mbox{on}\;\; \partial\Omega,\\
u(0)=u_0,\quad B(0)&=B_0 && \mbox{in}\;\; \Omega.
\end{aligned}
\end{equation}
Here $u$ means the velocity field, $\pi$ the pressure, and $B$ the magnetic field. The parameters $\varrho,\upnu,\upmu_0,\sigma>0$ denote physical constants, which we set identical to one in the sequel.
Note that the system is (locally) scaling invariant w.r.t. the scaling
$$  \big(u_\lambda,\pi_\lambda ,B_\lambda\big)(t,x) := \big(\lambda u, \lambda^2 \pi, \lambda B\big)(\lambda^2 t, \lambda x).$$
Next, observe that 
${\rm rot}\, B\times B = B\cdot \nabla B -\frac{1}{2}\nabla |B|^2.$  
As ${\rm div}\, u = {\rm div}\, B=0$ we may rewrite system \eqref{MHD1} in the following way.
\begin{equation}
\label{MHD2}
\begin{aligned}
\partial_t u +{\rm div}\, (u\otimes u) - \Delta u +\nabla \tilde\pi &= {\rm div}\, (B\otimes B) && \mbox{in}\;\; \Omega,\\
\partial_t B + {\rm div}\,(u\otimes B) -\Delta B &= {\rm div}\, (B\otimes u) && \mbox{in}\;\; \Omega,\\
{\rm div}\, u = {\rm div}\,B &=0 && \mbox{in}\;\; \Omega,\\
u=0,\quad B\cdot\nu =0,\quad \nu\times {\rm rot}\, B &=0  && \mbox{on}\;\; \partial\Omega,\\
u(0)=u_0,\quad B(0)&=B_0 && \mbox{in}\;\; \Omega,
\end{aligned}
\end{equation}
with $\tilde \pi =\pi + (1/2)|B|^2$. Let $\mathbb{P}$ denote the Helmholtz projection in $L_{q}(\Omega)^3$ and define $A_0(u,B):=(-\mathbb{P}\Delta u,-\mathbb{P}\Delta B)$ in $X_0:=L_{q,\sigma}(\Omega)^2$ with domain
$$X_1:={\sf D}(A_0):=\{(u,B)\in {H}^2_{q,\sigma}(\Omega)^2:u=0,\;\nu\times{\rm rot}\,B=0\ \text{on}\ \partial\Omega\}.$$
Note that by the properties of the Helmholtz projection, 
$B\cdot \nu=0$  on $\partial\Omega$ for $B\in H^{r}_{q,\sigma}(\Omega)$ with $r\in (1/q,2]$.
The pair $(X_0,A_0)$ generates the complex extrapolation-interpolation scale $(X_\alpha,A_\alpha)$, $\alpha\in\R$, see the Appendix.
In the sequel, we choose the weak setting for $u$ and $B$, i.e.\ $\alpha=-1/2$. This yields $$X_0^{\sf w}:=X_{-1/2}={_0H}_{q,\sigma}^{-1}(\Omega)\times {H}_{q,\sigma}^{-1}(\Omega),$$ where ${_0H}_{q,\sigma}^{-1}(\Omega):=({_0H}_{q,\sigma}^{1}(\Omega))'$, ${H}_{q,\sigma}^{-1}(\Omega):=({H}_{q,\sigma}^{1}(\Omega))'$, 
$${H}_{q,\sigma}^{1}(\Omega):=H_q^1(\Omega)^3\cap L_{q,\sigma}(\Omega)\quad\text{and}\quad{_0H}_{q,\sigma}^{1}(\Omega):=\{u\in {H}_{q,\sigma}^{1}(\Omega):u=0\ \text{on}\ \partial\Omega\}.$$
Denote by $A^{\sf w}$ the operator $A_{-1/2}$ with domain $X_1^{\sf w}:=X_{1/2}={_0H}_{q,\sigma}^{1}(\Omega)\times {H}_{q,\sigma}^{1}(\Omega)$. This way, we may rewrite \eqref{MHD2} as the bilinear evolution equation
$$\partial_t z+A^{\sf w}z=F^{\sf w}(z),\ t>0,\quad z(0)=z_0,$$
in $X_0^{\sf w}$ with $z:=(u,B)$ and $z_0:=(u_0,B_0)$. The nonlinearity $F$ is defined by
$$\langle F^{\sf w}(z)|\phi\rangle:=\left((u\otimes u-B\otimes B|\nabla\phi_1)_{L_2},(u\otimes B-B\otimes u|\nabla\phi_2)_{L_2}\right)$$
for $\phi=(\phi_1,\phi_2)\in {_0H}_{q,\sigma}^{1}(\Omega)\times {H}_{q,\sigma}^{1}(\Omega)$ and $z=(u,B)\in X_\beta$, with
\begin{equation*}
X_\beta={_0\mathbb{H}}_{q,\sigma}^{2\beta-1}(\Omega):=
\left\{
\begin{aligned}
& {_0H}_{q,\sigma}^{2\beta-1}(\Omega)\times {H}_{q,\sigma}^{2\beta-1}(\Omega) &&\text{for} && 2\beta-1\in (1/q,1],\\
& {H}_{q,\sigma}^{2\beta-1}(\Omega)\times {H}_{q,\sigma}^{2\beta-1}(\Omega) &&\text{for} && 2\beta-1\in [0,1/q),
\end{aligned}
\right.
\end{equation*}
and ${_0\mathbb{H}}_{q,\sigma}^{-r}(\Omega):=\left({_0\mathbb{H}}_{q',\sigma}^{r}(\Omega)\right)'$ if $r\in [0,1]$.
By similar arguments as in \cite[Section~5]{PrWi17a} one shows that the critical space is given by
$$X_{\gamma, \mu_c}={_0 B}^{3/q-1}_{q,\sigma}(\Omega)\times {B}^{3/q-1}_{q,\sigma}(\Omega),$$
where we assume  $2/p + 3/q\le 2$ for $p>1$ and $q>3/2$.
We refer to \cite{ZLY17} for corresponding results in the case $\Omega=\R^3$.

\section{Multilinear Nonlinearities}
In this last section, we consider (1.1) with multilinear nonlinearities of the form $F_2(u)=G(u,\ldots,u)$, where
$$ G:\Pi_{k=1}^m X_{\beta_k} \to X_0 $$
is multilinear and bounded, with $\beta_k\in(0,1)$ and $m\geq 2$. 
Although the results derived here are not used in this publication,
they are, nevertheless, relevant for applications.
As before, $X_{\beta_k}=(X_0,X_1)_{\beta_k}$ are complex interpolation spaces.
Here we show how to find the critical weight for this nonlinearity.

For this purpose, we may assume that the sequence $\beta_k$ is non-increasing, i.e.
$$ 1>\beta_1 \geq \beta_2\geq \ldots \geq \beta_m>0,$$
and suppose that $\sum_1^m \beta_k>1$; otherwise we are in the subcritical case as will turn out below. 
Define a sequence $\mu_j$ by means of 
$$ \mu_j = \frac{1}{p} + \frac{1}{j-1}\big(\sum_{k=1}^j \beta_k -1\big),\quad j=2,\ldots,m .$$
For the sake of definiteness, we set $\mu_1=-1$. 
We have \begin{equation*}
\begin{aligned}
\mu_{j+1}\geq\mu_j \quad & \Leftrightarrow \quad\frac{1}{j}\big(\sum_{k=1}^{j+1} \beta_k -1\big) \geq \frac{1}{j-1}\big(\sum_{k=1}^j\beta_k-1\big) \\
&\Leftrightarrow \quad(j-1)\big(\sum_{k=1}^{j+1} \beta_k -1\big) \geq j\big(\sum_{k=1}^{j+1} \beta_k -1\big)-j\beta_{j+1}\\
&\Leftrightarrow \quad \beta_{j+1} \geq \frac{1}{j}\big(\sum_{k=1}^{j+1}\beta_k-1\big) = \mu_{j+1}-\frac{1}{p}.
\end{aligned}
\end{equation*}
We now assume that there is a unique number $l\in\{2,\ldots,m\}$ such that $\mu_l=\max_j \mu_j$. Then the critical weight is given by
$$ \mu_c := \mu_l = \frac{1}{p} + \frac{1}{l-1}\big(\sum_{k=1}^l \beta_k -1\big).$$
We observe that
$\mu_{l}>\mu_{l-1}$ implies $\beta_l>\mu_l-1/p$, 
while $\mu_{l+1}<\mu_{l}$ implies 
$\beta_{l+1}<\mu_{l+1}-1/p<\mu_l-1/p.$
Hence,
$$\beta_j>\mu_c-1/p\;\text{ for $j\le l$,\quad  $\beta_j<\mu_c-1/p\;$ for $j>l$}.$$ 
The assumption $\sum_{k=1}^m \beta_k>1$ yields
$\mu_c-1/p\ge \mu_m-1/p >0$.
On the other side we have $\mu_c-1/p<1$, hence $\mu_c\le 1$ if $p$ is large enough.

We show that with this choice of $\mu_c$, Conditions {\bf (H2)} and  {\bf (H3)} are valid. In fact, the identity
$$ F(u)-F(\bar{u}) = \sum_{j=1}^m G(\bar{u},\ldots,\bar{u}, u-\bar{u}, u,\ldots, u)$$
implies
$$|F(u)-F(\bar{u})|_{X_0} \leq C \sum_{j=1}^m \Pi_{k=1}^{j-1}|\bar{u}|_{X_{\beta_k}}|u-\bar{u}|_{X_{\beta_j}} \Pi_{j+1}^m |u|_{X_{\beta_k}}.$$
Note that 
$X_{\gamma,\mu_c}\hookrightarrow X_{\beta_k}$ for $k>l$ and
$ X_\beta\hookrightarrow X_{\beta_k}\hookrightarrow X_{\gamma,\mu_c}$ for $k\le l$,
where $\beta=\max\beta_k =\beta_1$.
Setting
$$\alpha_k :=\frac{\beta_k-(\mu_c-1/p)}{\beta-(\mu_c-1/p)}$$
we obtain by interpolation
$$ |u|_{X_{\beta_k}}\leq c |u|_{X_{\gamma,\mu_c}}^{1-\alpha_k} |u|_{X_\beta}^{\alpha_k},\quad k\le l.$$
Setting $\alpha_k=0$ for $k>l$, this yields with $\rho_j = \sum_{k\neq j} \alpha_k$ and Young's inequality
\begin{align*}
\Pi_{k=1}^{j-1}|\bar{u}|_{X_{\beta_k}} \Pi_{k=j+1}^m |u|_{X_{\beta_k}}
&\leq C(|u|_{X_{\gamma,\mu_c}},|\bar{u}|_{X_{\gamma,\mu_c}}) |\bar{u}|_{X_\beta}^{\sum_{k=1}^{j-1} \alpha_k}|u|_{X_\beta}^{\sum_{k=j+1}^{m} \alpha_k}\\
&\leq C(|u|_{X_{\gamma,\mu_c}},|\bar{u}|_{X_{\gamma,\mu_c}})\big( |u|_{X_\beta}^{\rho_j}+|\bar{u}|_{X_\beta}^{\rho_j}\big),
\end{align*}
and thus
\begin{equation*}
\Pi_{k=1}^{j-1}|\bar{u}|_{X_{\beta_k}}\Pi_{j+1}^m |u|_{X_{\beta_k}} |u-\bar{u}|_{X_{\beta_j}}
\le C(|u|_{X_{\gamma,\mu_c}},|\bar{u}|_{X_{\gamma,\mu_c}})\big( |u|_{X_\beta}^{\rho_j}+|\bar{u}|_{X_\beta}^{\rho_j}\big)
|u-\bar u|_{X_{\beta_j}}.
\end{equation*}

This shows that Condition {\bf (H2)} holds. 
In order to verify Condition {\bf (H3)} we observe that $\rho_j$ is given by 
$$
\rho_j=\frac{1}{\beta-(\mu_c-1/p)}\sum_{1\le k\le l,k\neq j }(\beta_k-(\mu_c-1/p)).
$$
This yields for $j\le l$
\begin{equation*}
\begin{aligned}
 \rho_j(\beta-(\mu_c-1/p))+ (\beta_j-(\mu_c-1/p) = \big(\sum_{k=1}^l\beta_k\big)-l(\mu_c-1/p)) 
= 1-(\mu_c-1/p),
\end{aligned}
\end{equation*}
showing that $\mu_c$ is critical. 
For $j>l$ we obtain
\begin{equation*}
\begin{aligned}
\rho_j(\beta-(\mu_c-1/p))+ (\beta_j-(\mu_c-1/p))
&=(1-(\mu_c-1/p))+(\beta_j-(\mu_c-1/p)) \\
& < (1-(\mu_c-1/p)).
\end{aligned}
\end{equation*}
Hence $\mu_c$ is subcritical in this case and we may use the estimate
$|u-\bar u|_{X_{\beta_j}}\le c|u-\bar u|_{X_{\gamma,\mu}}$.
Combining, we see that $\mu_c$ defined above is the critical weight for multilinear maps.

\medskip
\goodbreak

\noindent
\begin{remarks}
\label{rem-multi}
Some special cases should be kept in mind.\\
{\bf (i)} $m=2$. Then $l=2$ and $\mu_c -1/p= \beta_1+\beta_2-1$.\\
{\bf (ii)} $m=3$. Then  $\mu_c -1/p= \beta_1+\beta_2-1$ if $\beta_3< \beta_1+\beta_2-1$,  and
 $\mu_c -1/p= (\beta_1+\beta_2+\beta_3-1)/2$ if $\beta_3> \beta_1+\beta_2-1$.\\
 {\bf (iii)} If $\beta_k=\beta$ for all $k$ then $\mu_c = 1/p + (m\beta-1)/(m-1)$.
\end{remarks}

\bigskip
\section{Appendix}
\subsection{Interpolation-extrapolation scale}
Here we collect some basic facts from the theory of Banach scales. Let $X_0$ a reflexive Banach space and $A_0\in\mathcal{BIP}(X_0)$ a linear operator with dense domain $X_1\hookrightarrow X_0$ and $0\in\rho(A_0)$. By \cite[Theorem V.1.5.1]{Ama95} the pair $(X_0,A_0)$ generates an interpolation-extrapolation scale $(X_\alpha,A_\alpha)$, $\alpha\in\R$, with respect to the complex interpolation functor $(\cdot,\cdot)_\theta$, $\theta\in (0,1)$. In particular, for any $\alpha\in\R$, the operator $A_\alpha:X_{\alpha+1}\to X_\alpha$ is a linear isomorphism. If $0\notin \rho(A_0)$, then choose $\omega>0$ such that $0\in\rho(\omega+A_0)$ and replace $A_0$ by $\omega+A_0$.

For $\alpha<\beta$, $\rho(A_\alpha)=\rho(A_\beta)$ and the scale is densely injected, meaning that the embedding $X_\beta\hookrightarrow X_\alpha$ is dense. If $\alpha\ge 0$, $A_\alpha$ is the maximal restriction of $A_0$ to $X_\alpha$ and if $\alpha<0$, then $A_\alpha$ is the closure of $A_0$ in $X_\alpha$, hence
$$A_\alpha u=A_0 u,\quad\text{if}\ u\in X_{1+\max\{\alpha,0\}},\ \alpha\in\R.$$

By \cite[Theorem V.1.5.4]{Ama95} the scale $(X_\alpha,A_\alpha)$, $\alpha\in\R$, is equivalent to the fractional power scale, 
generated by $(X_0,A_0)$. In particular, for $\alpha>0$ it holds that
$$X_\alpha=(D(A^\alpha),|A^\alpha\cdot |)$$
up to equivalent norms and the reiteration property
\begin{equation}
\label{eq:ReitProp}
(X_\alpha,X_\beta)_\theta=X_{(1-\theta)\alpha+\theta\beta},\ \alpha<\beta,\ \theta\in (0,1)
\end{equation}
holds.

Since $X_0$ is reflexive, \cite[Theorem V.1.5.12]{Ama95} yields that $X_\alpha$ is reflexive,
$$(X_\alpha)'=X_{-\alpha}^\sharp\quad\text{and}\quad (A_\alpha)'=A_{-\alpha}^\sharp$$
for any $\alpha\in\R$, where $(X_\alpha^\sharp,A_\alpha^\sharp)$, $\alpha\in\R$, denotes the dual interpolation-extrapolation scale generated by $(X_0^\sharp,A_0^\sharp)$. Here $X_0^\sharp$ is the dual space of $X_0$ and $A_0^\sharp$ denotes the dual operator of $A_0$ in $X_0^\sharp$ with domain $X_1^\sharp$. Furthermore, by \cite[Proposition V.1.5.5]{Ama95}, $A_\alpha\in\mathcal{BIP}(X_\alpha)$ for any $\alpha\in\R$, in particular, the reiteration property \eqref{eq:ReitProp} holds for the dual scale as well. 

Concerning real interpolation of the spaces $X_\alpha$, we note that for all $\alpha,\beta\in [0,1]$ with $(\alpha,\beta)\neq (0,0)$ and $\theta\in (0,1)$, it follows from the reiteration theorem
$$(X_{0},X_{\beta})_{\theta,p}=((X_{-\alpha},X_\beta)_{\frac{\alpha}{\alpha+\beta}},X_\beta)_{\theta,p}=(X_{-\alpha},X_{\beta})_{\frac{\alpha+\theta\beta}{\alpha+\beta},p}$$
and
$$(X_{0},X_{\beta})_{\theta,p}=(X_0,(X_0,X_1)_\beta)_{\theta,p}=(X_{0},X_{1})_{\beta\theta,p},$$
where we made also use of \eqref{eq:ReitProp}. In summary, we obtain
$$(X_{-\alpha},X_\beta)_{\tau,p}=(X_0,X_1)_{\tau(\alpha+\beta)-\alpha,p},$$
provided $\tau>\frac{\alpha}{\alpha+\beta}$ and $\tau<1$. For $\tau\in (0,\frac{\alpha}{\alpha+\beta})$ we make use of duality properties to derive
$$(X_{-\alpha},X_\beta)_{\tau,p}=\left((X_{-\beta}^\sharp,X_{\alpha}^\sharp)
_{1-\tau,p'}\right)'=\left((X_{0}^\sharp,X_{1}^\sharp)
_{\alpha-\tau(\alpha+\beta),p'}\right)',$$
provided $1-\tau>\frac{\beta}{\alpha+\beta}$ or equivalently $\tau<\frac{\alpha}{\alpha+\beta}$.

In particular, if $\alpha=s$ and $\beta=1-s$ for some $s\in [0,1]$, this yields
\begin{equation}
\label{re-int}
(X_{-s},X_{1-s})_{\tau,p}=
\left\{
\begin{aligned}
& (X_0,X_1)_{\tau-s,p} &&\text{for} && \tau\in (s,1)\\
& \left((X_0^\sharp,X_1^\sharp)_{s-\tau,p'}\right)' && \text{for}  &&\tau\in (0,s).
\end{aligned}
\right. 
\end{equation}

\subsection{An interpolation result}
The following interpolation result, which seems to be new, 
was used in the proof of Theorem~\ref{thm5}. 
For the function spaces $\cF_j$ appearing in the next proposition, the reader
should think of $L_{p,\mu}$, $H^s_{p,\mu}$.
\begin{proposition} 
\label{interpol}
Suppose $X_1$ is densely embedded in $X_0$, $A:X_1\to X_0$ is bounded and $A\in \mathcal{BIP}(X_0)$.
Let $\cF_j$, $j=0,1,$ be complete function spaces over an interval $J=(0,a)$ 
and let $\theta\in (0,1)$. Then
\begin{equation*}
(\cF_0(J;X_{\beta_0}),\cF_1(J;X_{\beta_1}))_\theta \cong \cF_\theta(J;X_\beta),
\quad \beta = (1-\theta)\beta_0 + \theta \beta_1,
\end{equation*}
where $(\cdot,\cdot)_\theta$ means complex interpolation,
$\cF_\theta =(\cF_0,\cF_1)_\theta$, and
$X_\alpha=(X_0,X_1)_\alpha$ for $\alpha\in (0,1).$
\end{proposition}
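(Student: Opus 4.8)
The plan is to reduce the vector-valued interpolation identity to the known scalar-weighted interpolation theory by ``factoring out'' the operator $A$. The key observation is that, since $A \in \mathcal{BIP}(X_0)$ with $0 \in \rho(A)$ (after a shift, which does not affect the statement since it only changes norms on $X_\alpha$ by equivalent ones), the fractional powers $A^{\beta_0}$ and $A^{\beta_1}$ give isometric-up-to-equivalence identifications $A^{\beta_j} : X_{\beta_j} \to X_0$, and more generally $A^{\beta_j - \beta} : X_{\beta_j} \to X_\beta$ is an isomorphism for each $j$. Thus the two sides of the claimed identity should both be identifiable, via pointwise-in-time application of an appropriate power of $A$, with interpolation spaces of $\mathcal{F}_j(J; X_0)$-type (or $X_\beta$-type), and then one appeals to the scalar result.

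First I would fix the reference space to be $\mathcal{F}_\theta(J; X_\beta)$ and define the multiplication operator $M = A^{\beta_0 - \beta}$ acting pointwise in $t$. Since $A^{\beta_0 - \beta} \in \mathcal{BIP}(X_\beta)$ with bounded inverse $A^{\beta - \beta_0}$, and since the $\mathcal{F}_j$ are function spaces over $J$ for which pointwise multiplication by a fixed bounded operator on the target is bounded, $M$ maps $\mathcal{F}_0(J; X_{\beta_0}) \to \mathcal{F}_0(J; X_\beta)$ isomorphically; similarly $A^{\beta_1 - \beta}$ identifies $\mathcal{F}_1(J; X_{\beta_1})$ with $\mathcal{F}_1(J; X_\beta)$. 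The subtlety is that these two identifications use \emph{different} powers of $A$, so one cannot directly push a single operator through the interpolation couple. The way around this is to instead identify both $X_{\beta_0}$ and $X_{\beta_1}$ with $X_0$ — but again by different powers. The cleanest route: use that $\{A^{z} : z \in \mathbb{C}\}$ is an analytic family and invoke the fact that complex interpolation of a couple is unchanged under an \emph{interpolation-compatible} isomorphism, i.e. a bounded analytic family $T_z$ of operators with $T_{it}$ bounded on $\mathcal{F}_0(J;X_{\beta_0})$ and $T_{1+it}$ bounded on $\mathcal{F}_1(J;X_{\beta_1})$; taking $T_z = A^{(1-z)\beta_0 + z\beta_1 - \beta_0} = A^{z(\beta_1-\beta_0)}$ does the job because $A \in \mathcal{BIP}$ guarantees the requisite imaginary-power bounds, uniformly on the relevant strip. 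This is precisely Stein interpolation / the ``analytic family of operators'' device, and it converts $(\mathcal{F}_0(J;X_{\beta_0}), \mathcal{F}_1(J;X_{\beta_1}))_\theta$ into $(\mathcal{F}_0(J;X_{\beta_0}), \mathcal{F}_1(J;X_{\beta_1}))_\theta$ isomorphically onto the couple with both endpoints replaced by $X_{\beta_0}$, i.e. onto $(\mathcal{F}_0, \mathcal{F}_1)_\theta(J; X_{\beta_0}) = \mathcal{F}_\theta(J;X_{\beta_0})$, and then a single fixed power $A^{\beta_0 - \beta}$ maps this isomorphically onto $\mathcal{F}_\theta(J; X_\beta)$.

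The remaining ingredient is the scalar statement that, for a \emph{fixed} Banach space $Y$ (here $Y = X_{\beta_0}$ or $Y = X_0$), one has $(\mathcal{F}_0(J;Y), \mathcal{F}_1(J;Y))_\theta \cong (\mathcal{F}_0,\mathcal{F}_1)_\theta(J;Y)$. For the spaces the authors care about ($L_{p,\mu}$ and $H^s_{p,\mu}$), this is standard: complex interpolation commutes with forming $Y$-valued spaces over a fixed measure space, and the time-weight and Bessel-potential scales behave well under $(\cdot,\cdot)_\theta$ — one uses that $L_{p,\mu}(J;Y) = L_p(J, t^{p(1-\mu)}dt; Y)$ and the Stafney/Calderón product formula, together with the description of $H^s_{p,\mu}$ via the scalar operator $(1 + |\partial_t|^2)^{s/2}$ acting on weighted $L_p$, which again commutes with $Y$-valued extension. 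I would state this as a lemma, referencing the standard theory (e.g. Triebel or Bergh–Löfström for the $Y$-valued extension, and Meyries–Veraar for the weighted Bessel-potential case), since the Proposition is explicitly only needed for these concrete $\mathcal{F}_j$.

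The main obstacle I anticipate is being careful about the uniformity of imaginary powers: to run the analytic-family argument one needs $\|A^{it}\|_{\mathcal{B}(X_{\beta_0})}$ bounded by $Ce^{\omega|t|}$ on the whole real line, with the growth rate controlled enough that, combined with the $\mathcal{F}_0$-multiplier norm, Stein's three-lines-type estimate still closes — this is exactly what $A \in \mathcal{BIP}$ delivers, but one should check that the multiplication operator $v(\cdot) \mapsto A^{it}v(\cdot)$ is bounded on $\mathcal{F}_0(J;X_{\beta_0})$ \emph{with} such exponential-in-$t$ control, which holds since $\mathcal{F}_0$ is a function space and multiplication by a bounded operator on the target is bounded with the same norm. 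A secondary point to handle with care is that the interpolation spaces $X_{\beta_j}$ coincide, up to equivalent norms, with the complex interpolation spaces $(X_0,X_1)_{\beta_j}$ used to define $A^{\beta_j}$ — this is part of the standing assumptions (the scale being the fractional-power scale), so it may be cited rather than reproved.
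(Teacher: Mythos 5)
Your proposal is correct and takes essentially the same route as the paper: the authors implement your Stein/analytic-family device by hand, composing the defining holomorphic function of the complex method with $e^{z^2-\theta^2}A^{\beta_0+(\beta_1-\beta_0)z}$ (the Gaussian factor absorbing the exponential growth of $|A^{it}|$ that you flag), and they likewise rely, implicitly, on the fixed-fiber identification $(\cF_0(J;Y),\cF_1(J;Y))_\theta\cong\cF_\theta(J;Y)$ that you isolate as a separate lemma.
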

\begin{proof}
As $A$ has bounded imaginary powers, we know that
$X_\alpha ={\sf D}(A^\alpha)$, $\alpha\in (0,1).$
We may assume w.l.o.g that $A$ is invertible.

\medskip
\noindent
{\bf (i)} Let $x\in (\cF_0(J;X_{\beta_0}),\cF_1(J;X_{\beta_1}))_\theta$ be given.
By definition of the complex interpolation method, there exists a bounded and continuous function
$$h:\bar S\to \cF_0(J;X_{\beta_0})+\cF_1(J;X_{\beta_1}),$$
where $S:=[0<{\rm Re}\,z<1]$,
such that $h$ is holomorphic on $S$,  
$$h(i\cdot)\in C_0(\R; \cF_0(J;X_{\beta_0})),\quad 
h(1+i\cdot)\in C_0(\R; \cF_1(J;X_{\beta_1})),\quad \text{and}\;\; x=h(\theta).$$
Here, $C_0$ denotes the space of continuous functions vanishing at infinity.
The norm of $x$ in $(\cF_0(J;X_{\beta_0}),\cF_1(J;X_{\beta_1}))_\theta$ is given by the infimum 
of
$$
|h(i\cdot)|_{L_\infty(\R; \cF_0(J;X_{\beta_0}))} + |h(1+i\cdot)|_{L_\infty(\R; \cF_1(J;X_{\beta_1}))},
$$
taken over all such functions $h$ with $h(\theta)=x$.
Let $g(z):=e^{z^2-\theta^2} A^{\beta_0 + (\beta_1-\beta_0)z}h(z)$ for $z\in\bar S$.
Using the fact that $A\in\mathcal{BIP}(X_0)$ one shows that
\begin{equation*}
g(i\cdot)\in C_0(\R;\cF_0(J;X_0)),\quad g(1+i\cdot)\in C_0(\R;\cF_1(J;X_0)).
\end{equation*}
This implies $h(\theta)=A^\beta x\in \cF_\theta(J;X_0)$
by definition of the complex interpolation method
and we can now conclude that $x\in \cF_\theta(J;X_\beta)$.
The argument also shows that 
$(\cF_0(J;X_{\beta_0}),\cF_1(J;X_{\beta_1}))_\theta$ is embedded in 
$\cF_\theta(J;X_\beta)$.

\medskip
\noindent
{\bf (ii)} 
Suppose $x\in \cF_\theta(J;X_\beta)$. Then there exists a bounded and continuous function
$$g:\bar S\to \cF_0(J;X_{\beta})+\cF_1(J;X_{\beta})$$ 
such that $h$ is holomorphic on $S$, and
$$g(i\cdot)\in C_0(\R; \cF_0(J;X_{\beta})),\quad 
g(1+i\cdot)\in C_0(\R; \cF_1(J;X_{\beta})),\quad \text{and}\;\; x=g(\theta).$$
Let 
$h(z)=e^{z^2-\theta^2} A^{\beta-\beta_0 - (\beta_1-\beta_0)z}g(z)$ for $z\in\bar S$.
Using once more the property that $A$ has bounded imaginary powers one shows that
\begin{equation*}
h(i\cdot)\in C_0(\R;\cF_0(J,X_{\beta_0})),
\quad h(1+i\cdot)\in C_0(\R;\cF_1(J,X_{\beta_1})).
\end{equation*}
Noting that $h(\theta)=x$ we can now conclude that
$x\in (\cF_0(J;X_{\beta_0}),\cF_1(J;X_{\beta_1}))_\theta$,
with continuous embedding.
\end{proof}

\bigskip
\bigskip

\end{document}